\documentclass[11pt,reqno]{amsart}%
\usepackage{graphicx,adjustbox}
\usepackage{amsmath}
\usepackage{amsfonts}
\usepackage{amssymb,mathrsfs}
\usepackage{mathtools}
\usepackage{enumerate}
\usepackage{color}
\usepackage{cite}
\usepackage{url}
\usepackage{enumitem} 
\usepackage{nicematrix}
\usepackage{tikz-cd}
\usepackage{tikz}
\usepackage{xcolor}
\usetikzlibrary{shapes,arrows}
\usetikzlibrary{decorations.pathmorphing, decorations.text}
\usepackage{hyperref}
\usepackage[normalem]{ulem}

\usepackage{stmaryrd}
\usepackage{graphicx}
\usepackage{mleftright}
\usepackage{caption}
\usepackage{stmaryrd}
\usepackage{algorithm}
\usepackage{algpseudocode}
\usepackage{tabu}
\usepackage{enumerate}



\providecommand{\U}[1]{\protect\rule{.1in}{.1in}}
\newtheorem{theorem}{Theorem}[section]
\newtheorem{proposition}[theorem]{Proposition}
\newtheorem{proposition/definition}[theorem]{Proposition/Definition}
\newtheorem{lemma}[theorem]{Lemma}

\theoremstyle{definition}
\newtheorem{definition}[theorem]{Definition}

\newtheorem{example}[theorem]{Example}

\theoremstyle{remark}

\newtheorem{remark}[theorem]{Remark}

\newcommand{\red}[1]{{\color{red}#1}}

\newcommand{\Rmnum}[1]{\expandafter\@slowromancap\Romannumeral #1@} 
\newcommand{\tp}{{\scriptscriptstyle\mathsf{T}}}

\NiceMatrixOptions{xdots/horizontal-labels}
\let\O\undefined

\let\U\undefined

\newcommand{\norm}[1]{\left\lVert#1\right\rVert}
\newcommand{\inner}[1]{\left\langle#1\right\rangle}
\DeclareMathOperator{\O}{O}

\DeclareMathOperator{\U}{U}

\DeclareMathOperator{\cl}{cl}

\DeclareMathOperator{\dom}{dom}
\DeclareMathOperator{\rge}{rge}

\DeclareMathOperator{\Int}{Int}

\DeclareMathOperator{\gph}{gph}

\DeclareMathOperator{\tr}{tr}

\DeclareMathOperator{\im}{im}
\DeclareMathOperator{\rank}{rank}
\DeclareMathOperator{\diag}{diag}
\DeclareMathOperator{\argmax}{argmax}
\DeclareMathOperator{\argmin}{argmin}

\usepackage{geometry}
\tikzset{
	symbol/.style={
		draw=none,
		every to/.append style={
			edge node={node [sloped, allow upside down, auto=false]{$#1$}}}
	}
}
\begin{document}
	
	
	\title[Least Squares with Ball Constraints]{Upper H\"olderian with Explicit Exponent of Solution Mapping with Applications to Ball Constrained Least Squares Problems}
	
	\author[Y.~Wang]{Yu Wang}
	\address{College of Science, National University of Defense Technology, Changsha 410072, Hunan, China.}
	\email{wangyu25@nudt.edu.cn}
	
	\author[S.~Hu]{Shenglong Hu}
	\address{College of Science, National University of Defense Technology, Changsha 410072, Hunan, China.}
	\email{hushenglong@nudt.edu.cn}


		\begin{abstract}
			In this paper, we propose an extension of the well-known Robinson implicit function theorem for generalized equations from the upper Lipschitzian case to the upper H\"olderian case. Explicit exponents dependence between the generalized equation and its linearization is determined. Applications to ball constrained least squares problems, including linear least squares and separable nonlinear least squares, are studied. In particular, we establish that the solution mapping of ball constrained linear least squares under linear perturbation is locally upper H\"older continuous with exponent $\frac{1}{3}$, which is of independent interest. Ultilizing the upper H\"olderian version of the implicit function theorem, we show the local upper H\"olderian of the solution mapping of parametric ball constrained linear least squares.
		\end{abstract}
			
		%
		
		
		
		
	\keywords{Constrained linear least square problem,   Solution mapping, Upper H\"older Continuous}	
		
		\maketitle
		
		
		\section{Introduction}\label{sec:Intro}
		
		A variety of problems can be recast as finding solutions of parameteric generalized equations of the following form 
		\begin{equation}\label{eq:sol_of_ge_eq}
			\mathcal{S}(\mathbf p):=\{\mathbf x\in\mathbb{R}^n\colon \mathbf 0\in F(\mathbf p,\mathbf x)+\mathcal T(\mathbf x)\},
		\end{equation}
		where $\mathbf p$ is a parameter, $F:\mathbb{R}^m\times\mathbb{R}^n\longrightarrow\mathbb{R}^n$ and $\mathcal{T}:\mathbb{R}^n\rightrightarrows\mathbb{R}^n$. 
		
		When $\mathcal{T}$ is single-valued, the generalized equation reduces to a system of classical parametric equations. In this case, monodromy of the solution $\mathcal{S}$ and $\mathcal{C}^1$ property of $\mathcal{S}$ can be obtained from the classical Implicit Function Theorem \cite{apostol1974mathematical,rudin1976principles,JRMunkres1991} when one assumes that both $\mathcal{T}$ and $F$ are $\mathcal{C}^1$. However, $\mathcal{S}$ is set-valued in general and this is common when $\mathcal T$ is set-valued. A particular interest on set-valued $\mathcal T$ is triggered by the demand from the optimization community. 
		The critical point system of the constrained optimization problem 
		\begin{equation}\label{eq:opt-const}		
			\begin{array}{rl}
				\min& f(\mathbf x)\\
				\text{s.t.}&\mathbf x\in C
			\end{array} 
		\end{equation} 
		is a prototypical example of \eqref{eq:sol_of_ge_eq} with $F=\nabla_{\mathbf{x}} f$ the gradient of $f$ and $\mathcal{T}=\mathcal{N}_{C}$ the normal cone of $C$. Usually, modeling parameters present in the objective function $f$, saying $\mathbf p\in\mathbb R^m$. As a result, the set of critical points $\mathcal{S}(\mathbf p)$, depending on $\mathbf p$, plays a pivot role in the senstivity and stability analysis of solution set of \eqref{eq:opt-const} \cite[Chapter~4]{Shapiro2000perturbationanalysis}, algorithmic design for solving \eqref{eq:opt-const} and its convergence rate analysis \cite[Chapter~6]{Rockafellar2014implicitfunctionsandsolutionmapping}, etc. 
		Sometimes, global properties of $\mathcal{S}(\mathbf p)$ may be wild to analysis and unnecessary, then local properties of $\mathcal{S}(\mathbf p)$ in terms of  
		\begin{equation}\label{eq:sigma}
			\Sigma(\mathbf p):=\mathcal{S}(\mathbf p)\cap X_0+\delta B 		
		\end{equation}
		for some (convex) compact $X_0$ and $\delta>0$ are studied.

		Besides the classical Implicit Function Theorem, for classical equations, various relaxations on the condition $\mathcal{C}^1$ were carried out in the literature, such as nonsmooth Lipschitz equations by Clarke \cite{clarke1976inverse} as well as \cite{krantz2013implicit,Rockafellar2014implicitfunctionsandsolutionmapping,kummer1989implicit,jittorntrum1978implicit,Robinson1994_Implicit} and references therein. For generalized equations, Robinson establishes the locally upper Lipschitz property of $\Sigma$ (cf.\ \eqref{eq:sigma}) under the assumption that the linearized generalized equation has an upper Lipschitz inverse at a given $\mathbf{p}_0$ and the upper Lipschitz continuity of $F$ in $\mathbf{p}$ uniformally over $\mathbf x$ \cite{Robinson1979_GenEqI}. Dontchev proves a similar result to Robinson's by assuming the existence of a strong approximation mapping $L_{\mathbf{p}_0}$ of $F(\mathbf{p}_0,\cdot)$ around a given $\mathbf{p}_0$  and upper Lipschitz  continuity of the solution mapping $\mathbf{y}\mapsto \{\mathbf{x}:L_{\mathbf{p}_0}(\mathbf{x})+\mathbf{y}\in \mathcal{T}(\mathbf{x})\}$ \cite{dontchev1993lipschitzian}. Mordukhovich  proposes several equivalent conditions using the coderivative for the Lipschitz continuity of $\mathcal{S}$ \cite{mordukhovich1994lipschitzian}.
		
		When $\mathcal{S}$ arises from the critical point system of an optimization problem, several works put their attention on them. 
		Klatte et al. give a sequence-based description for the strongly Lipschitz stability of $\mathcal{S}$, namely, the existence of a Lipschitz continuous single-valued localization of $\mathcal{S}$ \cite{klatte2005strong}. In a series of works, Izmailov and colleagues explore equivalent and sufficient conditions for the upper Lipschitz continuity, relying on noncriticality assumptions for multipliers, see \cite{izmailov2013note} and references herein. 
		Gfrerer and Klatte establish elaborate sufficient conditions for Lipschitz stablity under the supposition that $f$ possesses the restricted metric regularity on the points far from the Lipschitz distance to the optimal set \cite{gfrerer16lipschitz}.
		C\'anovas and Parra put forward several equivalent conditions for Lipschitz stability around some given $\mathbf{p}_0$ via metric regularization and Aubin property of the set-valued mapping $F(\mathbf{p}_0,\cdot)+\mathcal{T}(\cdot)$ \cite{canovas2025lipschitz}.
		Nghia derives the strongly Lipschitz stability of the solution mapping on the basis of quadratic growth condition of the relative approximation of the subgradient when \eqref{eq:sol_of_ge_eq} represents the KKT system of the regularized nonlinear least square problem \cite{Nghia2025geometrycharacterizationslipschitzstability}.
		Klatte shows the upper Lipschitz continuity of the solution mapping for quadratic programming with polyhedral constraints and fixed quadratic term \cite{klatte2026lipschitz}. 
		Cui et al. prove the Lipschitz stability of \eqref{eq:sol_of_ge_eq} for a special case when \eqref{eq:sol_of_ge_eq} represents the KKT system of the regularized nonlinear least square problem with the premise that the regularizer has $\mathcal{C}^2-$cone reducible conjugates \cite{cui2024lipschitzstabilityleastsquaresproblems}.
		
		A common consequence of the hypothese guaranteeing the above advances is that the mapping $\mathcal{S}$ from \eqref{eq:sol_of_ge_eq} has single-valued localization. However, this fails in several interesting situations \cite{Shapiro1997,JiangL22Holder}, in particular for the target problem considered in this paper (cf.\ \ref{eq:ball-nls}). A direct consequence is that Lipschitzian of $\mathcal{S}$, even local Lipschitzian, is out of our expection. Therefore, instead of Lipschitzian, H\"olderian of $\mathcal{S}$ is the main tone on characterizing the landscape of $\mathcal{S}$. 
		Gfrerer and Klatte also establish elaborate sufficient conditions for H\"older stablity under similar conditions as for the Lipschitz stability case \cite{gfrerer16lipschitz}. Interestingly, 
		Mordukhovich et al. confirm the $\frac{1}{2}$-H\"older stability  under the assumption of the uniform second order growth condition and the basic constraint qualification \cite{mordukhovich2014full}. Several other $\frac{1}{2}$-H\"older stability can be found in the monographs \cite{clarke1990optimization,Shapiro2000perturbationanalysis,Rockafellar2014implicitfunctionsandsolutionmapping,rockafellar2009variational,mordukhovich1996variationalI} and references therein.
		
		We see from the literature that there is short of $\alpha$-H\"olderian stability of $S$ with explicit $\alpha$. In this paper, we fill this gap by present an extension of  Robinson's theorem \cite{Robinson1979_GenEqI} to the H\"olderian setting (see Theorem~\ref{thm:holder-ge}). As an application, upper H\"olderian properties of the optimal solution set of the ball-constrained linear least squares (Ball-LS) problem is studied. It provides an example of $\mathcal{S}$ for which the results in the literature mentioned above fails to hold. The Ball-LS problem has the form
		\begin{align}\label{eq:ball-nls} 
			\begin{array}{rl}
				\min_{\mathbf{x}}& \frac{1}{2}\norm{\mathbf{Ax}-\mathbf{b}}^2\\
				\text{s.t.}& \norm{\mathbf{x}}^2\leq R^2,
			\end{array}\tag{$\mathscr{P}(\mathbf{A},\mathbf{b},R)$}
		\end{align}
		where $\mathbf{A}\in \mathbb{R}^{m\times n}$, $\mathbf{b}\in \mathbb{R}^m$, $R\geq 0$, and $\norm{\cdot}$ denotes the Frobenius norm. Classical LS is unconstrained. The ball constraint in \ref{eq:ball-nls} is a regularization for ill-conditioned $\mathbf A$. It also appears as subproblems in sequential optimization procedures \cite{he-26-tensor}.  
		LS problem is a basic block to numerical linear algebra \cite{Demmel1997appliednumericallnearalgebra}, optimization \cite{NY2018}, and statistics \cite{rao1995linear}. 
		Perturbation of the solutions of LS with respect to $\mathbf A$ and $\mathbf b$ has been a frontier topic \cite{golub1973differentiation,golub1976differentiation,Demmel1997appliednumericallnearalgebra, gulliksson2002perturbation, stewart1990matrix,cui2024lipschitzstabilityleastsquaresproblems,Nghia2025geometrycharacterizationslipschitzstability,Berk2023Lasso}. Nevertheless, most of them are for the case when $\mathbf A$ is nondegenerate or has local constant rank. 
		It is pointed out in \cite{stewart1990matrix} qualitatively that the solution set of LS can be quite ill-posed, see also \cite{HuWang2026variantionalgeometryofoptimizations}. On the other hand, the overall quantitative behavior of the optimal solution set of LS with respect to the parameters $\mathbf A$ and $\mathbf b$ has not been determined completely. In this paper, we present a complete characterization for the case Ball-LS. It is worth to mention that Jiang and Li \cite{JiangL22Holder} obtain H\"olderian with H\"older exponent $\frac{1}{4}$ for ball-constrained quadratic programming via a techique lemma proved in \cite{Wang01011994}. Our result can be regareded as an extension of their result to Ball-LS with a tighter H\"older exponent $\frac{1}{3}$ through a refined analysis (see Proposition~\ref{prop:holder-calm} and Theorem~\ref{thm:upper-holder-nls}).
		
		Specifically, we characterize the H\"olderian properties of the optimal solution set of problem \ref{eq:ball-nls} by treating $\mathbf{A}, \mathbf{b}, R$ as parameters via analyzing the solution mapping
		\begin{equation}\label{eq: solution mapping}
			\begin{array}{rl}
				\mathcal{S}:\mathbb{R}^{m\times n}\times \mathbb{R}^m \times \mathbb{R}_{+}&\rightrightarrows \mathbb{R}^n,\\
				(\mathbf{A},\mathbf{b},R)&\longmapsto\argmin_{x\in\mathcal{B}(R)} \frac{1}{2}\norm{\mathbf{Ax}-\mathbf{b}}^2,
			\end{array}
		\end{equation}
		where $\mathbb{R}_{+}:=\{R\in\mathbb{R}:R\geq 0\}$ and $\mathcal{B}(R):=\{\mathbf{x}\in\mathbb{R}^{n}:\norm{\mathbf{x}}\leq R\}$. Since $\mathcal{B}(R)$ is compact for all $R\geq 0$, the solution mapping $\mathcal{S}$ is nonempty over the entire parameter space $\mathbb{U}:=\mathbb{R}^{m\times n}\times \mathbb{R}^{m}\times \mathbb{R}_{+}$.  
		
		Our main contributions are summarized as follows
		\begin{enumerate}
			\item An extension of Robinson's implicit function theorem \cite[Theorem~1]{Robinson1979_GenEqI} to the H\"olderian case with explicit exponent characterization is established under parallel hypotheses as those in \cite{Robinson1979_GenEqI}. (cf.\ Theorem~\ref{thm:holder-ge}). This extension requires a nontrivial proof whose details are given in Section~\ref{sec:upper_Holder}. 
			\item As an application, the upper H\"olderian of the optimal solution set of Ball-LS with explicit exponent at least $\frac{1}{3}$ is established (cf.\ Theorem~\ref{thm:upper-holder-nls}). It is based on an explicit H\"olderian exponent characterization of Ball-LS under linear perturbations (cf.\ Proposition~\ref{prop:holder-calm}), which is of independent interest. 
		\end{enumerate}
		On top of these, another application of Theorem~\ref{thm:holder-ge} to H\"olderian  continuity for separable  nonlinear least square is given (cf.\  Proposition~\ref{prop:Holder_for_gen_nls}), and explicit formulae for the optimal value function and the optimal solution set of Ball-LS is presented (cf.\ Proposition~\ref{prop:optimal-value-solution-mapping} in Appendix~\ref{sec:basiclsp}).

		The rest of the paper is organized as follows. 	
		Some necessary preliminaries on variational analysis is collected in Section~\ref{sec:preliminary}. 	
		The extension from Robinson's theorem to the upper H\"olderian of \eqref{eq: solution mapping} with explicit H\"older exponent is given in Section~\ref{sec:upper_Holder}. The main result is presented in Theorem~\ref{thm:holder-ge}. Then, a quantitative result of the upper  H\"olderian of the solution mapping of~\eqref{eq: solution mapping} with global H\"older exponent $\frac{1}{3}$ will be presented in Section~\ref{sec:local_upper_Holder} (cf.\ \ Theorem~\ref{thm:upper-holder-nls}). An application for separable nonlinear least square (cf.\  Proposition~\ref{prop:Holder_for_gen_nls}) is also given herein. Some final remarks are put in Section~\ref{sec:con}. To keep the main theme of this paper, properties on Ball-LS~\ref{eq:ball-nls} is put in Appendix~\ref{sec:basiclsp}.

		\section{Preliminary}\label{sec:preliminary}
		In this section, necessary notations and basics on continuity for sets and set-valued mappings are presented. More details can be found from \cite{rockafellar2009variational,Shapiro2000perturbationanalysis,Rockafellar2014implicitfunctionsandsolutionmapping} and references therein.

		\subsection{Notations}\label{sec:notations}
		Notations such as 
		$\mathbf{A}^{\tp}$, $\rank(\mathbf{A})$, $\det(\mathbf{A})$, $\tr(\mathbf{A})$ for a matrix $\mathbf{A}$ have the standard meanings as in linear algebra, and other standards such as $\cl(\Omega)$ for the closure of $\Omega\subseteq \mathbb{V}$ with $\mathbb{V}$ a Euclidean space and $\Int(\Omega)$ for its interior as in topology. The following list of notations will be adopted throughout the paper unless otherwise specified. 
		\begin{itemize}
			\item $\mathbb{R}$ denotes the real line, 
			$\mathbb{R}_{+}:=\{x\in\mathbb{R}:x\geq 0\}$ the nonnegative part, and $\mathbb{R}_{++}:=\{x\in\mathbb{R}:x> 0\}$ the positive part.
			\item $\mathbb{R}^n$ and $\mathbb{R}^{m\times n}$ denote the spaces of $n$-dimensional real {column} vectors and $m\times n$ real matrices respectively. 
			\item  $\mathbf{I}_n$ denotes for the $n\times n$ identity matrix, and $\mathbf{0}$ for zero matrix or zero vector whose dimension is clear from the context. 
			\item $\im(\mathbf{A})$ and $\ker(\mathbf{A})$ for $\mathbf{A}\in\mathbb{R}^{m\times n}$ denote the image space and the kernel space of $\mathbf{A}$, respectively.
			\item $\inner{\cdot,\cdot}$ stands for the canonical Euclidean or Frobenius inner product on $\mathbb{R}^n$ or $\mathbb{R}^{m\times n}$, and $\norm{\cdot}$ for the induced norm. In this paper, all the metric and topological structures on $\mathbb{R}^n$ and $\mathbb{R}^{m\times n}$ are induced by $\norm{\cdot}$.
			\item $\norm{\mathbf{A}}_2:=\min_{\mathbf{x}\in\mathbb{R}^n,\mathbf{x}\neq \mathbf{0}}\frac{\norm{\mathbf{Ax}}}{\norm{\mathbf{x}}}$ denotes the spectral norm for $\mathbf{A}\in\mathbb{R}^{m\times n}$. 
			\item $\mathbf{A}^{\dag}$ represents the Moore-Penrose pesudo-inverse of $\mathbf{A}\in\mathbb{R}^{m\times n}$. 
			\item  $\O(n)$ denotes the orthogonal group on the Euclidean space $(\mathbb{R}^n,\inner{\cdot})$.
			\item For $R\geq 0$, $\mathcal{B}(R):=\{\mathbf{x}\in\mathbb{R}^n:\norm{\mathbf{x}}\leq R\}$ denotes the ball with radius $R$ inside $\mathbb R^n$. When $R=1$, $\mathcal{B}(R)$ simplifies to $\mathcal{B}$.
			\item For a differentiable mapping $g:\mathbb{R}^n\times \mathbb{R}^m\longrightarrow \mathbb{R}^P$ with value $g(\mathbf{x},\mathbf{u})$, $d_\mathbf{x}g$ denotes the partial differential  with respect to  $\mathbf{x}$, and $J_\mathbf{x}g$ the (partial) Jacobian matrix of $d_\mathbf{x}g$ under the canonical bases.
			\item For  $F:\mathbb{V}_1\longrightarrow\mathbb{V}_2$ with $\mathbb{V}_1,\mathbb{V}_2$ some Euclidean spaces, $\mathcal{U}\subseteq \mathbb{V}_1$ (not necessarily open) and $\mathbf{x}\in\mathcal{U}$ an accumulating point of $\mathcal{U}$, $F$ is continuous/differentiable at $\mathbf{x}$ {along} $\mathcal{U}$ if by restricting $F$ on $\mathcal{U}$, $F_{\mathcal{U}}$ is continuous/differentiable at $\mathbf{x}$ with respect to the induced subspace metric structure of $\mathcal{U}$.
		\end{itemize}   
		\subsection{Continuity}\label{sec:continuity}
		This section recalls necessary concepts and results on continuity from variational analysis. For more details, we refer to \cite{Rockafellar2014implicitfunctionsandsolutionmapping,rockafellar2009variational,Shapiro2000perturbationanalysis} and references herein. In this section, we assume that $\mathbb{V},\mathbb{V}_i (i=1,2)$ are Euclidean spaces.

		For a set-valued mapping $\mathcal{F}:\Omega\rightrightarrows \mathbb{V}_2$ where $\Omega\subseteq \mathbb{V}_1$, the {domain} $\dom(\mathcal{F})$ of $\mathcal{F}$ is defined by  $\dom(\mathcal{F}):=\{\mathbf{x}\in \Omega: \mathcal{F}(\mathbf{x})\neq \emptyset\}$ and the {range} $\rge(\mathcal{F})$ of $\mathcal{F}$ by $\rge(\mathcal{F}):=\{\mathbf{y}\in\mathbb{V}_2:\exists \mathbf{x}\in\Omega, \text{ s.t. }\mathbf{y}\in\mathcal{F}(\mathbf{x})\}.$ The {graph} $\gph(\mathcal{F})$ of $\mathcal{F}$ is defined by $\gph(\mathcal{F}):=\{(\mathbf{x},\mathbf{y})\in\Omega\times \mathbb{V}_2:\mathbf{y}\in\mathcal{F}(\mathbf{x})\}.$ The {inverse} $\mathcal{F}^{-1}$ of $\mathcal{F}$ is defined by $\mathcal{F}^{-1}(\mathbf{y}):=\{\mathbf{x}\in\Omega: \mathbf{y}\in \mathcal{F}(\mathbf{x})\}$. 
		
		\begin{definition}[Outer Semi-continuity]\label{defn: continuity of solution sets}
			Let $\mathcal{F}: \Omega\subseteq \mathbb{V}_1\rightrightarrows \mathbb{V}_2$ be a  set-valued mapping, $\Omega_0\subseteq\Omega$ and $\bar{\mathbf{x}}\in \cl(\Omega_0)$. The outer limit of $\mathcal{F}$ at $\bar{\mathbf{x}}$ relative to $\Omega_0$ is defined  as follows:
			\begin{equation}\label{eq: outer and inner limits} 
				\limsup_{\mathbf{x}\to \bar{\mathbf{x}},\mathbf{x}\in\Omega_0}\mathcal{F}(\mathbf{x}):=\{\mathbf{y}\in \mathbb{V}_2:\exists \mathbf{x}^{k}\in \Omega_0, \lim_{k\to\infty}\mathbf{x}^k=\bar{\mathbf{x}}, \exists \mathbf{y}^k\in \mathcal{F}(\mathbf{x}^k),\text{ s.t. } \lim_{k\to \infty} \mathbf{y}^k=\mathbf{y}  \}. 
			\end{equation}
			$\mathcal{F}$ is {outer semi-continuous (osc)} at $\bar{\mathbf{x}}$ relative to $\Omega_0$ if 
			$	\limsup_{\mathbf{x}\to \bar{\mathbf{x}},\mathbf{x}\in\Omega_0}\mathcal{F}(\mathbf{x})\subseteq \mathcal{F}(\bar{\mathbf{x}})$. 
			If $\mathcal{F}$ is osc at any point of $\Omega_1\subseteq \cl(\Omega_0)\bigcap \Omega$ relative to $\Omega_0$, then $\mathcal{F}$ is said to be osc on $\Omega_1$ relative to $\Omega_0$.
		\end{definition}
		The attributive pharse ``relative to $\Omega_0$'' is often omitted whenever $\Omega_0=\Omega$ or it is clear from the context. 
		\begin{definition}[Upper H\"olderian]\label{def:upper-hold}
			Let $\mathbb{X},\mathbb{Y}$ be two normed spaces and $B_{\mathbb{Y}}$ be the closed unit ball in $\mathbb{Y}$. Let $\Xi\subseteq \mathbb{X}$ with $\mathbf{p}_0\in\Xi$.	A set-valued mapping $\mathcal{F}:\mathbb{X}\rightrightarrows \mathbb{Y}$ is \textbf{upper H\"older continuous}  with  exponent $\alpha>0$ and modulus $\lambda>0$ at $\mathbf{p}_0$ with respect to $\Xi$, denoted by $\mathcal{F}\in\mathcal{H}_{\mathbf{p}_0,\Xi}(\alpha;\lambda)$, if for any $\mathbf{p}\in \Xi$, we have 
			\begin{equation}\label{eq: def of upper Holder continuity}
				\mathcal{F}(\mathbf{p})\subseteq \mathcal{F}(\mathbf{p}_0)+\lambda\norm{\mathbf{p}-\mathbf{p}_0}^\alpha B_{\mathbb{Y}}.
			\end{equation}
			We say that $\mathcal{F}$ is {locally upper H\"older continuous with  exponent $\alpha>0$ and modulus $\lambda>0$ at $\mathbf{p}_0$}, denoted by $\mathcal{F}\in\mathcal{H}_{\mathbf{p}_0}(\alpha;\lambda)$ if  $\mathcal{F}\in\mathcal{H}_{\mathbf{p}_0,\Xi}(\alpha;\lambda)$ for some neighborhood $\Xi$ of $\mathbf{p}_0$. 
		\end{definition}
		
		\begin{remark}\label{rmk: Holder continuity}
			\begin{enumerate}[label=(\arabic*)]
				\item In some literature such as \cite{Shapiro2000perturbationanalysis}, the upper H\"older continuous with exponent $\alpha$ for a solution mapping of a parametric optimization problem is also called {H\"older stability of degree $\alpha$}. It is also reasonable to call it \textit{H\"older calmness} with exponent $\alpha$ in view of calmness in the classical sense \cite{Rockafellar2014implicitfunctionsandsolutionmapping}. 
				\item If $\mathcal{F}$ is a single-valued mapping,  then upper H\"olderian $\mathcal{F}\in \mathcal{H}_{\mathbf{p}_0,\Xi}(\alpha,\lambda)$ extends the classical definition that $\mathcal{F}$ is H\"older continuous with exponent $\alpha>0$ and modulus $\lambda>0$ at $\mathbf{p}_0$ with respect to $\Xi$ (cf.\  \cite[Section~1.1]{Fiorenza2016Holder}).
				\item  $\mathcal{F}\in\mathcal{H}_{\mathbf{p}_0}(1,\lambda)$  means that $\mathcal{F}$ is locally upper Lipschitz continuous with modulus $\lambda$ (cf.\ ~\cite{Robinson1979_GenEqI}). 
				\item  $\mathcal{H}_{\mathbf{p}_0}(\alpha_1,\lambda)\subseteq\mathcal{H}_{\mathbf{p}_0}(\alpha_2,\lambda)$ whenever $\alpha_2\leq \alpha_1$. 
			\end{enumerate}
		\end{remark}
		
		Let $\mathscr{P}(\mathbb{Y})$ be the power set of $\mathbb{Y}$, and endow $\mathscr{P}(\mathbb{Y})$ with the {directed Hausdorff metric} 
		\begin{equation}\label{eq:direct-haus-def}
			\rho_{DH}(\Omega_1,\Omega_2):=\sup_{\mathbf{p}\in\Omega_1}\inf_{\mathbf{q}\in\Omega_2}\norm{\mathbf{p}-\mathbf{q}},\quad\ \forall \Omega_1,\Omega_2\in \mathscr{P}(\mathbb{Y}). 
		\end{equation}  
		\begin{proposition}\label{prop:equivalent}
			A set-valued mapping $\mathcal{F}:\mathbb{X}\rightrightarrows \mathbb{Y}$ is {upper H\"older continuous  with  exponent $\alpha>0$ and modulus $\lambda>0$ at $\mathbf{p}_0$ with respect to $\Xi$} (i.e., $\mathcal{F}\in\mathcal{H}_{\mathbf{p}_0,\Xi}(\alpha;\lambda)$) if and only if for any $\mathbf{p}\in\Xi$,
			\begin{equation}\label{eq:direct-haus}
				\rho_{DH}(\mathcal{F}(\mathbf{p}),\mathcal{F}(\mathbf{p}_0))\leq \lambda \norm{\mathbf{p}_0-\mathbf{p}}^\alpha,
			\end{equation}
			which is the upper H\"older continuous (or H\"older stability) for the single-valued map  $\mathbb{X}$ to $ \mathscr{P}(\mathbb{Y})$ induced by $\mathcal{F}:\mathbb{X}\rightrightarrows \mathbb{Y}$ with exponent $\alpha>0$ and modulus $\lambda>0$ at $\mathbf{p}_0$ with respect to $\Xi$. 
		\end{proposition}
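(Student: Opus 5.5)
The plan is to unfold both conditions pointwise and compare them for each fixed $\mathbf{p}\in\Xi$. Throughout I write $t:=\lambda\norm{\mathbf{p}-\mathbf{p}_0}^\alpha\ge 0$ and $d(\mathbf{x},\Omega):=\inf_{\mathbf{q}\in\Omega}\norm{\mathbf{x}-\mathbf{q}}$. With this notation, \eqref{eq:direct-haus} says exactly that $d(\mathbf{x},\mathcal{F}(\mathbf{p}_0))\le t$ for every $\mathbf{x}\in\mathcal{F}(\mathbf{p})$. I use the usual conventions $\sup\emptyset=0$ and $\inf\emptyset=+\infty$. Under these conventions both statements are trivially true when $\mathcal{F}(\mathbf{p})=\emptyset$. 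When $\mathcal{F}(\mathbf{p})\neq\emptyset$ and $\mathcal{F}(\mathbf{p}_0)=\emptyset$, both are false. So I may assume both sets are nonempty.

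\emph{Necessity.} Suppose \eqref{eq: def of upper Holder continuity} holds. Take any $\mathbf{x}\in\mathcal{F}(\mathbf{p})$. Then $\mathbf{x}=\mathbf{q}+t\mathbf{u}$ for some $\mathbf{q}\in\mathcal{F}(\mathbf{p}_0)$ and some $\mathbf{u}\in B_{\mathbb{Y}}$. Hence $d(\mathbf{x},\mathcal{F}(\mathbf{p}_0))\le\norm{\mathbf{x}-\mathbf{q}}\le t$. Taking the supremum over $\mathbf{x}\in\mathcal{F}(\mathbf{p})$ gives \eqref{eq:direct-haus}.

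\emph{Sufficiency.} Conversely, let $\mathbf{x}\in\mathcal{F}(\mathbf{p})$, so that $d(\mathbf{x},\mathcal{F}(\mathbf{p}_0))\le t$. I must produce $\mathbf{q}\in\mathcal{F}(\mathbf{p}_0)$ with $\norm{\mathbf{x}-\mathbf{q}}\le t$. A general set only gives $\mathbf{q}_k\in\mathcal{F}(\mathbf{p}_0)$ with $\norm{\mathbf{x}-\mathbf{q}_k}\le t+1/k$, i.e.\ $\mathbf{x}\in\cl\big(\mathcal{F}(\mathbf{p}_0)+tB_{\mathbb{Y}}\big)$. To remove the closure I use that $\mathcal{F}(\mathbf{p}_0)$ is closed, which holds for the solution mappings of interest in this paper. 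In the finite-dimensional setting $\mathbb{Y}=\mathbb{R}^n$ fixed in Section~\ref{sec:continuity}, the sequence $(\mathbf{q}_k)$ is bounded. It therefore has a convergent subsequence, whose limit $\mathbf{q}$ lies in $\mathcal{F}(\mathbf{p}_0)$ by closedness and satisfies $\norm{\mathbf{x}-\mathbf{q}}\le t$. Thus $\mathbf{x}\in\mathcal{F}(\mathbf{p}_0)+tB_{\mathbb{Y}}$. The degenerate case $\mathbf{p}=\mathbf{p}_0$, where $t=0$, is covered by the same argument: it just says $\mathcal{F}(\mathbf{p}_0)\subseteq\mathcal{F}(\mathbf{p}_0)$.

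The only real obstacle is this attainment of the infimum in the sufficiency step. The statement is silent on closedness, so in the write-up I will state explicitly that $\mathcal{F}(\mathbf{p}_0)$ is assumed closed; equivalently, one may replace $\mathcal{F}(\mathbf{p}_0)$ by its closure in \eqref{eq: def of upper Holder continuity}. Without such an assumption, sufficiency fails even for $t>0$ when $\mathcal{F}(\mathbf{p}_0)$ is an open ball. Once closedness is in place, the remaining steps are routine, and the final sentence of the proposition is simply a restatement of \eqref{eq:direct-haus} in terms of the metric $\rho_{DH}$ on $\mathscr{P}(\mathbb{Y})$.
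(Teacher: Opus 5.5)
Your proof is correct and uses the same definition-unfolding as the paper, whose proof is the single sentence that the claim ``follows directly'' from the definitions of $\rho_{DH}$ and $\mathcal{H}_{\mathbf{p}_0,\Xi}(\alpha;\lambda)$. Where you go further is the ``if'' direction, and you are right to. It needs the infimum $\inf_{\mathbf{q}\in\mathcal{F}(\mathbf{p}_0)}\norm{\mathbf{x}-\mathbf{q}}$ to be attained, and without such a hypothesis the statement as written is false. A concrete instance of your objection: take $\mathbb{X}=\mathbb{Y}=\mathbb{R}$, $\mathbf{p}_0=0$, $\alpha=\lambda=1$, $\mathcal{F}(0)=(0,1)$, and $\mathcal{F}(p)=\{1+|p|\}$ for $p\neq 0$. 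Then $\rho_{DH}(\mathcal{F}(p),\mathcal{F}(0))=|p|$ for every $p$, yet $1+|p|\notin(-|p|,1+|p|)=\mathcal{F}(0)+|p|B_{\mathbb{Y}}$. So your closedness hypothesis corrects the statement; it is not a limitation of your route.

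For the write-up, also state finite-dimensionality of $\mathbb{Y}$ as an explicit hypothesis. Definition~\ref{def:upper-hold} is phrased for general normed spaces, where a closed set need not contain a nearest point; there you would need compactness or proximinality of $\mathcal{F}(\mathbf{p}_0)$.

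There is also a hypothesis-free variant. The $\rho_{DH}$-bound with modulus $\lambda$ always gives $\mathcal{F}\in\mathcal{H}_{\mathbf{p}_0,\Xi}(\alpha;\lambda')$ for every $\lambda'>\lambda$. Indeed, for $\mathbf{p}\neq\mathbf{p}_0$ one has $t>0$, so you can pick $\mathbf{q}\in\mathcal{F}(\mathbf{p}_0)$ with $\norm{\mathbf{x}-\mathbf{q}}\le(\lambda'/\lambda)t$.

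None of this damages the rest of the paper. The ``only if'' direction, which you prove with no assumptions, is what feeds the estimates in the proof of Theorem~\ref{thm:holder-ge}. Wherever the ``if'' direction is used, $\mathcal{F}(\mathbf{p}_0)$ is compact: it is $\mathcal{X}_0$ in that proof and $\mathcal{R}_{\mathbf{A},\mathbf{b}}(\mathbf{0})$ in Proposition~\ref{prop:holder-calm}.
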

		
		\begin{proof}{Proof}
			It follows directly from the definition of directed Hausdorff distance \eqref{eq:direct-haus-def} and upper H\"olderian \eqref{eq: def of upper Holder continuity}. 
			
		\end{proof}
		
		Notice that the directed Hausdorff metric is non-symmetric and thus the order of the sets matters. This partly justifies the nomenclature ``upper H\"older continuous". 		
		It follows from Proposition~\ref{prop:equivalent} that if $\mathcal{F}$ is upper H\"older continuous at $\mathbf{p}_0$, then $\mathcal{F}$ is continuous at $\mathbf p_0$ with respect to the commonly used double infimum quasi-metric defined by 
		\[
		d_{di}(\Omega_1,\Omega_2):=\inf_{\mathbf{p}\in \Omega_1,\mathbf{q}\in \Omega_2}\norm{\mathbf{p}-\mathbf{q}}.
		\]
		Under mild conditions, upper  H\"olderian implies outer semi-continuity, as the following lemma shows.
		\begin{lemma}\label{lemma: Holder continuity implies continuity}
			Let $\mathbb{V}_1,\mathbb{V}_2$ be Euclidean spaces and $\mathcal{F}:\Omega\rightrightarrows \mathbb{V}_2$ with $\Omega\subseteq\mathbb{V}_1$ be a set-valued mapping. Suppose that for $\mathbf{p}_0\in\Omega$, $\mathcal{F}(\mathbf{p}_0)$ is compact  and there exists a neighborhood $\mathcal{U}$ of $\mathbf{p}_0$ relative to $\Omega$  such that $\mathcal{F}\in \mathcal{H}_{\mathbf{x}_0,\mathcal{U}}(\alpha;\lambda)$ for some $\alpha,\lambda>0$, then $\mathcal{F}$ is outer semi-continuous at $\mathbf{p}_0$.
		\end{lemma}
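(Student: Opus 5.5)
The plan is to verify the sequential definition of outer semi-continuity directly (Definition~\ref{defn: continuity of solution sets}). Note that the subscript $\mathbf{x}_0$ in the statement is evidently meant to be $\mathbf{p}_0$. We therefore use $\mathcal{F}\in\mathcal{H}_{\mathbf{p}_0,\mathcal{U}}(\alpha;\lambda)$.

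Fix $\mathbf{y}\in\limsup_{\mathbf{p}\to\mathbf{p}_0}\mathcal{F}(\mathbf{p})$. By definition there are sequences $\mathbf{p}^k\in\Omega$ with $\mathbf{p}^k\to\mathbf{p}_0$ and $\mathbf{y}^k\in\mathcal{F}(\mathbf{p}^k)$ with $\mathbf{y}^k\to\mathbf{y}$. Since $\mathcal{U}$ is a neighborhood of $\mathbf{p}_0$ relative to $\Omega$, we have $\mathbf{p}^k\in\mathcal{U}$ for all $k$ large. For such $k$, the inclusion \eqref{eq: def of upper Holder continuity} gives
\[
\mathbf{y}^k\in\mathcal{F}(\mathbf{p}_0)+\lambda\norm{\mathbf{p}^k-\mathbf{p}_0}^\alpha B_{\mathbb{V}_2}.
\]
Hence there is $\mathbf{z}^k\in\mathcal{F}(\mathbf{p}_0)$ with $\norm{\mathbf{y}^k-\mathbf{z}^k}\le\lambda\norm{\mathbf{p}^k-\mathbf{p}_0}^\alpha$. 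Equivalently, by Proposition~\ref{prop:equivalent}, $\operatorname{dist}(\mathbf{y}^k,\mathcal{F}(\mathbf{p}_0))\le\lambda\norm{\mathbf{p}^k-\mathbf{p}_0}^\alpha$. Compactness makes the infimum attained, so the point $\mathbf{z}^k$ exists.

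Because $\alpha>0$, the right-hand side tends to $0$, and so $\norm{\mathbf{z}^k-\mathbf{y}}\le\norm{\mathbf{z}^k-\mathbf{y}^k}+\norm{\mathbf{y}^k-\mathbf{y}}\to 0$. Thus $\mathbf{z}^k\to\mathbf{y}$. Since $\mathcal{F}(\mathbf{p}_0)$ is compact, hence closed, we conclude $\mathbf{y}\in\mathcal{F}(\mathbf{p}_0)$, which establishes the inclusion required for outer semi-continuity.

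There is no real obstacle here. The only points needing care are that the sequence $\mathbf{p}^k$ eventually lies in $\mathcal{U}$, and that closedness of $\mathcal{F}(\mathbf{p}_0)$ is needed to pass to the limit; compactness supplies closedness and the attainment of the distance. One degenerate case remains: if $\mathcal{F}(\mathbf{p}_0)=\emptyset$, the Hölder inclusion forces $\mathcal{F}(\mathbf{p})=\emptyset$ on $\mathcal{U}$, so the outer limit is empty and the inclusion holds trivially.
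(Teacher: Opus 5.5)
Your proof is correct and follows the paper's sequential argument: take $\mathbf{p}^k\to\mathbf{p}_0$ and $\mathbf{y}^k\in\mathcal{F}(\mathbf{p}^k)$ with $\mathbf{y}^k\to\mathbf{y}$, use the H\"older inclusion to get nearby points of $\mathcal{F}(\mathbf{p}_0)$, and pass to the limit. The only difference is that the paper extracts a convergent subsequence of those points via Bolzano--Weierstrass, whereas you show directly by the triangle inequality that they converge to $\mathbf{y}$. So you need only closedness of $\mathcal{F}(\mathbf{p}_0)$; moreover the existence of $\mathbf{z}^k$ already follows from the inclusion with the closed unit ball, without invoking compactness.
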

		
		\begin{proof}{Proof}			
			For any $\mathbf{q}_0\in \limsup_{\mathbf{p}\to \mathbf{p}_0,\mathbf{p}\in\Omega}\mathcal{F}(\mathbf{p})$, there exist $\mathbf{p}^k\in\Omega$ and $\mathbf{q}^k\in\mathcal{F}(\mathbf{p}^k)$ such that $\lim_{k\to+\infty}\mathbf{p}^k=\mathbf{p}_0$ and $\lim_{k\to+\infty}\mathbf{q}^{k}=\mathbf{q}_0$. We can choose $K>0$ such that for every $k>K$, $\mathbf{p}^k\in\mathcal{U}$. Then by Definition~\ref{def:upper-hold} (cf.\ \eqref{eq: def of upper Holder continuity}), for every $k>K$,  there exists $\mathbf{y}^k\in\mathcal{F}(\mathbf{p}_0)$, such that 
			\begin{equation}\label{eq: Holder condition}
				\norm{\mathbf{q}^k-\mathbf{y}^k}\leq \lambda \norm{\mathbf{p}^k-\mathbf{p}_0}^{\alpha}.
			\end{equation} 
			Since $\mathcal{F}(\mathbf{p}_0)$ is compact, then by the Bolzano-Weierstrass theorem, there exists a subsequence  $\{\mathbf{y}^{k_l}\}$  such that $\mathbf{y}_0:=\lim_{l\to+\infty}\mathbf{y}^{k_l}$ exists and $\mathbf{y}_0\in \mathcal{F}(\mathbf{p}_0)$. Thus by replacing $\mathbf{q}^{k}, \mathbf{y}^{k}, \mathbf{p}^{k}$  by $\mathbf{q}^{k_l}, \mathbf{y}^{k_l}, \mathbf{p}^{k_l}$, respectively  in~\eqref{eq: Holder condition} and letting $l\to+\infty$, we get that $\norm{\mathbf{q}_0-\mathbf{y}_0}=0$. Thus $\mathbf{q}_0=\mathbf{y}_0\in \mathcal{F}(\mathbf{p}_0)$. Therefore, $\mathcal{F}$ is outer semi-continuous at $\mathbf{p}_0$ by Definition~\ref{defn: continuity of solution sets}.
		\end{proof}

		\section{Upper H\"olderian with Explicit Exponent of Generalized Equations}\label{sec:upper_Holder}
		In this section, we present a quantitative extension of the Robinson implicit function theorem for generalized equations \cite[Theorem~1]{Robinson1979_GenEqI} to the upper H\"olderian case. The theorem (cf.\ Theorem~\ref{thm:holder-ge}) is stated in Section~\ref{sec:implicit-theorem}, followed by discussions. The proof for Theorem~\ref{thm:holder-ge} is nontrivial, which needs several lemmas prepared in Section~\ref{sec:lemmas}, and is given in Section~\ref{sec:proof-ge}. Sections~\ref{sec:lemmas} and \ref{sec:proof-ge} can be safely skipped without compromising the understanding the results for Ball-LS in Section~\ref{sec:local_upper_Holder}. 
		
		\subsection{Upper H\"olderian Implicit Function Theorem}\label{sec:implicit-theorem} 
		
		Given subsets $\mathcal{K}\subset\Omega\subseteq \mathbb{R}^n$ and  $\Xi\subseteq\mathbb{R}^m$, we consider the following parameteric generalized equation 
		\begin{equation}\label{eq:parametric_generalized_equation}
			\mathbf{0}\in F(\mathbf{p},\mathbf{x})+\mathcal{T}(\mathbf{x}),\ \mathbf x\in\mathcal K,
		\end{equation}
		where $\mathcal{T}:\mathcal{K}\rightrightarrows \mathbb{R}^n$ is a set-valued mapping and $F: \Xi\times \Omega\longrightarrow \mathbb{R}^n$.  Denote the solution mapping of~\eqref{eq:parametric_generalized_equation} as 
		\begin{equation}\label{eq:solution_mapping_for_parametric_generalized_equations}
			\begin{array}{rl}
				\mathcal{S}_{GE}: \Xi&\rightrightarrows \mathcal{K},\\
				\mathbf{p}&\mapsto \{\mathbf{x}\in\mathcal{K}: \mathbf{0}\in F(\mathbf{p},\mathbf{x})+\mathcal{T}(\mathbf{x})\}.
			\end{array}
		\end{equation}
		
		If $F$ is
		differentiable with respect to the variables in $\Omega$, we denote 
		$F_2(\mathbf{p},\mathbf{x}):=J_{\mathbf{x}}F(\mathbf{p},\mathbf{x})$ and for $(\mathbf{p}_0,\mathbf{x}_0)\in \Xi\times\mathcal{K}$, the ``partial linearization" of the set-valued mapping $\mathbf{x}\mapsto F(\mathbf{p}_0,\mathbf{x})+\mathcal{T}(\mathbf{x})$:
		\begin{equation}\label{eq:linearized_generalized_equation}
			\begin{array}{rl}
				\mathcal{L}_{\mathbf{x}_0}: \mathcal{K}&\rightrightarrows \mathbb{R}^n,\\
				\mathbf{x}&\mapsto F(\mathbf{p}_0,\mathbf{x}_0)+F_2(\mathbf{p}_0,\mathbf{x}_0)(\mathbf{x}-\mathbf{x}_0)+\mathcal{T}(\mathbf{x}).
			\end{array}
		\end{equation} 
		Inside \eqref{eq:linearized_generalized_equation},  $LF_{\mathbf{x}_0}(\mathbf{x}):=F(\mathbf{p}_0,\mathbf{x}_0)+F_2(\mathbf{p}_0,\mathbf{x}_0)(\mathbf{x}-\mathbf{x}_0)$ is the “linearization" of $F$ at $(\mathbf{p}_0,\mathbf{x}_0)$ with respect to $\mathbf{x}$.
		\begin{theorem}[Implicit Function Theorem with Explicit H\"older Exponent]\label{thm:holder-ge}
			Assuming notations as above, suppose that $\mathcal{K}$ is closed and $\Omega$ is open, $\mathcal{T}$ is outer semi-continuous,  $F$ is continuous on $\Xi\times \Omega$ and differentiable with respect to $\mathbf{x}\in\Omega$, and $F_2(\mathbf{p},\mathbf{x})$ is continuous on $\Xi\times \Omega$. Let $\mathbf{p}_0\in \Xi$. 
			Suppose that there exist a compact convex set $\mathcal{X}_0\neq \emptyset$, and constants $\alpha,\gamma,\eta>0$ and $\lambda>1$ such that $\mathcal{X}_{\theta}:=\mathcal{X}_0+\theta \mathcal{B}\subseteq \mathcal{K}$ for $\theta\in [0,\gamma]$, and for each $\mathbf{x}_0\in \mathcal{X}_0$ with $\mathcal{IL}_{\mathbf{x}_0,\theta}(\mathbf{y}):=\mathcal{L}_{\mathbf{x}_0}^{-1}(\mathbf{y})\bigcap \mathcal{X}_\theta$ for $\theta\in[0,\gamma]$, there hold   
			\begin{enumerate}[label=(A\arabic*),ref=(A\arabic*)]
				\item $\mathcal{IL}_{\mathbf{x}_0,\gamma}(\mathbf{0})=\mathcal{X}_0;$\label{item:ass_1}
				\item $\mathcal{IL}_{\mathbf{x}_0,\gamma}\in \mathcal{H}_{\mathbf{0},\eta \mathcal{B}}(\alpha,\lambda);$\label{item:ass_2}
				\item for each $\mathbf{y}\in \eta\mathcal{B}$, $\mathcal{IL}_{\mathbf{x}_0,\gamma}(\mathbf{y})$ is convex and nonempty;\label{item:ass_3}
				\item if $\alpha<1$, then there exists $\tilde{\lambda}\leq\frac{1}{2}\lambda^{-\frac{1}{\alpha}}$ such that for any $\mathbf{x}_0\in\mathcal{X}_0$, $F_2(\mathbf{p}_0,\cdot)\in\mathcal{H}_{\mathbf{x}_0,\mathcal{X}_\gamma}(\frac{1}{\alpha}-1;\tilde{\lambda})$.\label{item:ass_4}
			\end{enumerate}     
			Then there exists $\bar \delta\in (0,\gamma]$ such that for any $\delta\in(0,\bar\delta]$, there exists a neighborhood $\mathcal{W}$ of $\mathbf{p}_0$ such that for
			\begin{equation}\label{eq: def of the inverse mapping}
				\mathcal{A}_\delta(\mathbf{p}):=\mathcal{S}_{GE}(\mathbf{p})\cap\mathcal{X}_\delta,
			\end{equation}
			we have
			\begin{enumerate}[label=(R\arabic*),ref=(R\arabic*)]
				\item For any $\mathbf{p}\in\mathcal{W}$, $\mathcal{A}_\delta(\mathbf{p})\neq \emptyset$ and $\mathcal{A}_\delta$ is outer semi-continuous on $\mathcal{W}$ (relative to $\mathcal{W}$).\label{item:res_1}
				\item $\mathcal{A}_\delta(\mathbf{p}_0)=\mathcal{X}_0$.\label{item:res_2}
				\item Let $\mu_\theta(\mathbf{p}):=\max_{\mathbf{x}\in\mathcal{X}_\theta}\{\norm{F(\mathbf{p},\mathbf{x})-F(\mathbf{p}_0,\mathbf{x})}\}$ for any $\theta\in [0,\gamma]$.
				\begin{enumerate}
					\item If $0<\alpha<1$, then 
					for any $\mathbf{p}\in \mathcal{W}$, it holds
					\begin{equation}\label{eq:Holder continuity,1}
						\mathcal{A}_\delta(\mathbf{p})\subseteq \mathcal{A}_\delta(\mathbf{p}_0)+(1-\frac{1}{2^\alpha})^{-1}\lambda\mu_\delta(\mathbf{p})^\alpha\mathcal{B}.
					\end{equation} 
					\item If $\alpha\geq 1$, then for any $\mathbf{p}\in \mathcal{W}$, it holds
					\begin{equation}\label{eq:Holder continuity,2-0}
						\mathcal{A}_{\delta}(\mathbf{p})\subseteq \mathcal{A}_{\delta}(\mathbf{p}_0)+2\lambda\mu_0(\mathbf{p})^\alpha\mathcal{B};
					\end{equation}
				\end{enumerate}  \label{item:res_3}
			\end{enumerate}
			Moreover, if in addition, $\mu_{\delta}$ is locally upper H\"older continuous at $\mathbf{p}_0$ with exponent $\beta$, then $\mathcal{A}_{\delta}\in \mathcal{H}_{\mathbf{p}_0}(\alpha\beta,M)$ for some $M>0$.
		\end{theorem}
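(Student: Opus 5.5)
Following Robinson's original argument, I would recast the perturbed generalized equation as a fixed-point problem for a set-valued map built from the inverse linearization. Fix $\mathbf{x}_0\in\mathcal{X}_0$ and $\mathbf{p}$ near $\mathbf{p}_0$. For $\mathbf{x}\in\mathcal{X}_\delta$ put
\[
\mathbf{r}_{\mathbf{p}}(\mathbf{x}):=LF_{\mathbf{x}_0}(\mathbf{x})-F(\mathbf{p},\mathbf{x}),
\]
so that $\mathbf{x}\in\mathcal{S}_{GE}(\mathbf{p})$ exactly when $\mathbf{x}\in\mathcal{L}_{\mathbf{x}_0}^{-1}(\mathbf{r}_{\mathbf{p}}(\mathbf{x}))$. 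Then define $\Phi_{\mathbf{p}}(\mathbf{x}):=\mathcal{IL}_{\mathbf{x}_0,\gamma}(\mathbf{r}_{\mathbf{p}}(\mathbf{x}))$. Fixed points of $\Phi_{\mathbf{p}}$ lying in $\mathcal{X}_\delta$ are precisely the points of $\mathcal{A}_\delta(\mathbf{p})$.

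I would split the residual as
\[
\mathbf{r}_{\mathbf{p}}(\mathbf{x})=\bigl[F(\mathbf{p}_0,\mathbf{x})-F(\mathbf{p},\mathbf{x})\bigr]-\bigl[F(\mathbf{p}_0,\mathbf{x})-LF_{\mathbf{x}_0}(\mathbf{x})\bigr].
\]
The first bracket has norm at most $\mu_\delta(\mathbf{p})$. The second is a Taylor remainder. I would take $\mathbf{x}_0$ to be a nearest point of $\mathcal{X}_0$ to $\mathbf{x}$ (this is possible since $\mathcal{X}_0$ is compact and convex, and the segment stays in $\mathcal{X}_\gamma$). By the mean value theorem and (A4), the remainder is bounded by $\tilde\lambda\,\norm{\mathbf{x}-\mathbf{x}_0}^{1/\alpha}$.

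When $\alpha\ge1$, continuity of $F_2$ on the compact set $\mathcal{X}_\gamma$ gives the remainder bound $o(\norm{\mathbf{x}-\mathbf{x}_0})$ instead. Choosing $\bar\delta$ small, and $\mathcal{W}$ so small that $\mu_\delta(\mathbf{p})\le\eta/2$, keeps $\norm{\mathbf{r}_{\mathbf{p}}}\le\eta$. Then (A3) makes $\Phi_{\mathbf{p}}$ nonempty and convex-valued on $\mathcal{X}_\delta$. Combining (A1) and (A2) gives $\Phi_{\mathbf{p}}(\mathbf{x})\subseteq\mathcal{X}_0+\lambda\norm{\mathbf{r}_{\mathbf{p}}(\mathbf{x})}^\alpha\mathcal{B}\subseteq\mathcal{X}_\delta$. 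Moreover $\Phi_{\mathbf{p}}$ is closed-graph, because $\mathcal{T}$ is osc and $F$, $F_2$ are continuous.

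Kakutani's theorem on the compact convex set $\mathcal{X}_\delta$ then gives a fixed point, so $\mathcal{A}_\delta(\mathbf{p})\ne\emptyset$. One caveat is that $\mathbf{x}_0$ must be held fixed when building the map. I would fix one $\mathbf{x}_0\in\mathcal{X}_0$ for the existence step and use the $o(\cdot)$ and Hölder estimates with respect to $\operatorname{dist}(\mathbf{x},\mathcal{X}_0)$ only later, in the stability step. Outer semicontinuity in (R1) follows from closedness of $\gph\mathcal{S}_{GE}$, which comes from osc of $\mathcal{T}$ and continuity of $F$. For (R2), a solution at $\mathbf{p}_0$ with $d=\operatorname{dist}(\mathbf{x},\mathcal{X}_0)$ satisfies $d\le\lambda\norm{\text{remainder}}^\alpha$. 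In the case $\alpha\ge1$ the remainder is $o(d)$, and in the case $\alpha<1$ it is at most $\tilde\lambda d^{1/\alpha}$, which gives $d\le\lambda\tilde\lambda^\alpha d\le 2^{-\alpha}d$. In either case $d=0$.

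For (R3), take $\mathbf{x}\in\mathcal{A}_\delta(\mathbf{p})$, let $d:=\operatorname{dist}(\mathbf{x},\mathcal{X}_0)$, and choose $\mathbf{x}_0$ to be the projection of $\mathbf{x}$ onto $\mathcal{X}_0$. Then (A1)–(A2) yield
\[
d\le\lambda\bigl(\mu_\delta(\mathbf{p})+\tilde\lambda d^{1/\alpha}\bigr)^\alpha\le\lambda\mu_\delta(\mathbf{p})^\alpha+\lambda\tilde\lambda^\alpha d
\]
when $\alpha<1$, using subadditivity of $t\mapsto t^\alpha$. Since $\lambda\tilde\lambda^\alpha\le 2^{-\alpha}$, this gives $d\le(1-2^{-\alpha})^{-1}\lambda\mu_\delta(\mathbf{p})^\alpha$, which is (a). When $\alpha\ge1$ the remainder is at most $\varepsilon d$ for $\bar\delta$ small. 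To get $\mu_0$ in place of $\mu_\delta$, I would bound $\norm{F(\mathbf{p},\mathbf{x})-F(\mathbf{p},\mathbf{x}_0)-(F(\mathbf{p}_0,\mathbf{x})-F(\mathbf{p}_0,\mathbf{x}_0))}$ by $\varepsilon d$. This holds by uniform continuity of $F_2$ in $\mathbf{p}$, after shrinking $\mathcal{W}$. Then $\norm{\mathbf{r}}\le\mu_0(\mathbf{p})+2\varepsilon d$, and the key step is absorbing the $d$-term against the power $\alpha\ge1$. Using $d\le\bar\delta$ small, we have $(a+b)^\alpha\le 2^{\alpha-1}(a^\alpha+b^\alpha)$ and $\lambda(2\varepsilon d)^\alpha\le\lambda 2^{\alpha-1}(2\varepsilon)^\alpha\bar\delta^{\alpha-1}d$. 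Picking $\varepsilon$ and $\bar\delta$ so that the last coefficient is at most $1/2$ gives $d\le 2\lambda\mu_0^\alpha$, possibly after adjusting constants, which is where the factor $2$ comes from.

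I expect this absorption, and getting the exact constants in (b), to be the main obstacle. The final claim is immediate: $\mu_\delta(\mathbf{p})\le c\norm{\mathbf{p}-\mathbf{p}_0}^\beta$ and $\mu_0\le\mu_\delta$, so both estimates become $\mathcal{A}_\delta(\mathbf{p})\subseteq\mathcal{A}_\delta(\mathbf{p}_0)+M\norm{\mathbf{p}-\mathbf{p}_0}^{\alpha\beta}\mathcal{B}$, using (R2).
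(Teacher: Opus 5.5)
\textbf{Gap: the frozen base point in the existence step.} You correctly note that the Taylor remainder $F(\mathbf{p}_0,\mathbf{x})-LF_{\mathbf{x}_0}(\mathbf{x})$ is small only when $\mathbf{x}_0$ is the nearest point of $\mathcal{X}_0$ to $\mathbf{x}$. You then freeze $\mathbf{x}_0$ to build $\Phi_{\mathbf{p}}$, and these two choices are incompatible. With $\mathbf{x}_0$ fixed, $\norm{\mathbf{x}-\mathbf{x}_0}$ ranges up to the diameter of $\mathcal{X}_0$ plus $\delta$ as $\mathbf{x}$ runs over $\mathcal{X}_\delta$. So the remainder is governed by the variation of $F_2(\mathbf{p}_0,\cdot)$ across all of $\mathcal{X}_0$ and does not shrink with $\delta$. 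Consequently $\mathbf{r}_{\mathbf{p}}(\mathbf{x})$ need not lie in $\eta\mathcal{B}$, and neither (A2) nor (A3) gives you nonempty values or $\Phi_{\mathbf{p}}(\mathbf{x})\subseteq\mathcal{X}_\delta$.

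This really fails. Take $n=2$, scalar $p$ with $p_0=0$, $F(p,\mathbf{x})=(x_1-p,\,x_2^2)$, and $\mathcal{T}(\mathbf{x})=\{0\}\times[-1,3]$ for $|x_2|\le 1$ (empty otherwise), with $\mathcal{X}_0=\{0\}\times[-1,1]$. For $\mathbf{x}_0=(0,t_0)$ one gets $\mathcal{L}_{\mathbf{x}_0}^{-1}(\mathbf{y})=\{(y_1,x_2): |x_2|\le 1,\ y_2+t_0^2-2t_0x_2\in[-1,3]\}$. Hence (A1)--(A3) hold with $\alpha=1$, $\lambda=2$ and small $\gamma,\eta$, while (A4) is vacuous. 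Now freeze $\mathbf{x}_0=(0,1)$ and take $\mathbf{x}=(0,-1-\delta)\in\mathcal{X}_\delta$. Then $\mathbf{r}_p(\mathbf{x})=(p,-(2+\delta)^2)$ and $\Phi_p(\mathbf{x})=\emptyset$, so Kakutani cannot be applied. A frozen base point would only work if $F(\mathbf{p}_0,\cdot)$ were affine, as in the Ball-LS application, not under the stated hypotheses.

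The fix is what the paper does: let the base point move with $\mathbf{x}$. Use $\mathbf{x}\mapsto\mathcal{IL}_{\mathcal{P}(\mathbf{x}),\gamma}\bigl(LF_{\mathcal{P}(\mathbf{x})}(\mathbf{x})-F(\mathbf{p},\mathbf{x})\bigr)$, where $\mathcal{P}$ is the projection onto $\mathcal{X}_0$. Its graph is $\{(\mathbf{x},\mathbf{y})\in\mathcal{X}_\delta\times\mathcal{X}_\gamma: F_2(\mathbf{p}_0,\mathcal{P}(\mathbf{x}))(\mathbf{x}-\mathbf{y})-F(\mathbf{p},\mathbf{x})\in\mathcal{T}(\mathbf{y})\}$. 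This set is closed because $\mathcal{P}$ is $1$-Lipschitz, $F$ and $F_2$ are continuous, and $\mathcal{T}$ is osc, so there is no need to hold $\mathbf{x}_0$ fixed. With this choice the residual is bounded uniformly on $\mathcal{X}_\delta$ by $\mu_\delta(\mathbf{p})+c(\delta)\delta$, or by $\mu_\delta(\mathbf{p})+\tilde\lambda\delta^{1/\alpha}$ when $\alpha<1$. That gives both the nonempty values and the self-map property.

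\textbf{Remaining parts.} Your (R1) osc argument, (R2), (R3)(a) and the composite-exponent claim match the paper. In (R3)(b), however, the equal-weight bound $(u+v)^\alpha\le 2^{\alpha-1}(u^\alpha+v^\alpha)$ only yields $2^\alpha\lambda\mu_0(\mathbf{p})^\alpha$, which exceeds the claimed $2\lambda$ when $\alpha>1$. Use instead the weighted convexity bound $(u+v)^\alpha\le\theta^{1-\alpha}u^\alpha+(1-\theta)^{1-\alpha}v^\alpha$ with $\theta$ close to $1$ on the $\mu_0$ term. This is the content of Lemmas~\ref{lem:inequality}--\ref{lem:inequality-para}, and it gives $(\lambda+\epsilon)\mu_0(\mathbf{p})^\alpha$, hence $2\lambda$.
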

		\begin{remark}\label{rmk:inverse_mapping_theorem_Holder_case}
			\begin{enumerate}[label=(\arabic*)] 
				\item The requirement of assumption \ref{item:ass_4} for $\alpha<1$ in Theorem~\ref{thm:holder-ge} is as follows: all kinds of inverse mapping theorem and fixed point theorem require some type of contraction property, explicitly or implicitly. That is, the mapping in consideration must map a  neighborhood of the given point into a smaller neighborhood contained in it. 
				Note that when $\alpha<1$,  $a<a^\alpha$ for $a<1$. Thus the standard implication as in \cite{Robinson1979_GenEqI} from $\rho_{DH}(\mathcal{IL}_{\mathbf{x}_0,\gamma}(\mathbf{y}_1),\mathcal{IL}_{\mathbf{x}_0,\gamma}(\mathbf{y}_2))\leq\lambda\norm{\mathbf{y}_1-\mathbf{y}_2}^\alpha$ to that $\rho_{DH}(\mathcal{IL}_{\mathbf{x}_0,\gamma}(\mathbf{y}_1),\mathcal{IL}_{\mathbf{x}_0,\gamma}(\mathbf{y}_2))$ is bounded by a constant times $\norm{\mathbf{y}_1-\mathbf{y}_2}$ when $\norm{\mathbf{y}_1-\mathbf{y}_2}$ is small cannot be adopted.  The assumption \ref{item:ass_4} is for the purpose to offset the speed of the local growth rate when we compose $F_2$ with $\mathcal{IL}_{\mathbf{x}_0,\gamma}$. 
				\item Theorem~\ref{thm:holder-ge} with $\alpha=1$ reduces to the classical Lipschitz case proved by Robinson in \cite[Theorem~1]{Robinson1979_GenEqI}.
				\item A sufficient condition for locally upper H\"older continuous of $\mu_\delta$ at $\mathbf{p}_0$ with exponent $\beta$ and modulus $\nu$ is that $F$ is locally upper H\"older continuous with respect to $\mathbf{p}$ at $\mathbf{p}_0$ with exponent $\beta$ and modulus $\nu$ uniformly on $\mathcal{X}_\delta$.
				\item If $F$ is differentiable with respect to $\mathbf{p}$ and the partial differential $d_{\mathbf{p}}F$ is continuous, then we can choose a convex compact neighborhood $\mathcal{W}_0$ of $\mathbf{p}_0$ so that for any $\mathbf{x}\in\mathcal{X}_0$ and for any $\mathbf{p}\in \mathcal{W}_0$, we have  $\mathbf{p}(\tau):=\tau\mathbf{p}+(1-\tau)\mathbf{p}_0\in \mathcal{W}_0$  and  
				\begin{equation*}
					\begin{aligned}
						\norm{F(\mathbf{p},\mathbf{x})-F(\mathbf{p}_0,\mathbf{x})}&=\norm{\int_0^1 d_{\mathbf{p}}F(\mathbf{p}(\tau),\mathbf{x})[\mathbf{p}-\mathbf{p}_0]d\tau}\\&\leq \norm{\mathbf{p}-\mathbf{p}_0}\cdot \max_{\mathbf{x}\in\mathcal{X}_0}\norm{\int_0^1d_{\mathbf{p}}F(\mathbf{p}(\tau),\mathbf{x})d\tau}_2. 
					\end{aligned}
				\end{equation*}
				Since $d_{\mathbf{p}}F$ is continuous and $\mathcal{X}_0$ is compact, $\max_{\mathbf{x}\in\mathcal{X}}\norm{\int_0^1d_{\mathbf{p}}F(\mathbf{p}(\tau),\mathbf{x})d\tau}_2<+\infty$. Thus $F$ is locally upper Lipschitz continuous and hence upper H\"older continuous with respect to $\mathbf{p}$ at $\mathbf{p}_0$ uniformly on $\mathcal{X}_0$.
				\item The modulus $2\lambda$ in \eqref{eq:Holder continuity,2-0} can be improved to as close as to $\lambda$. From \textbf{Part II.3(b)} of the proof for Theorem~\ref{thm:holder-ge}, it follows that for each $\epsilon>0$, there exist $\delta_\epsilon\leq \gamma$ and a neighborhood $\mathcal{W}_{\epsilon}$ of $\mathbf{p}_0$ such that for any $\mathbf{p}\in \mathcal{W}_{\epsilon}$, it holds
				\begin{equation}\label{eq:Holder continuity,2}
					\mathcal{A}_{\delta_\epsilon}(\mathbf{p})\subseteq \mathcal{A}_{\delta_\epsilon}(\mathbf{p}_0)+(\lambda+\epsilon)\mu_0(\mathbf{p})^\alpha\mathcal{B}.
				\end{equation}
			\end{enumerate}
		\end{remark}
		
		\subsection{Technical Lemmas}\label{sec:lemmas}
		Technical lemmas for the proof of Theorem~\ref{thm:holder-ge} are collected in this subsection. 
		\begin{lemma}[Estimation for Linearization]\label{lem:estimation} 
			Assume the same notations and assumptions in Theorem~\ref{thm:holder-ge}, and let $\mathcal{P}:\mathbb{R}^n\longrightarrow\mathcal{X}_0$ be the projection onto $\mathcal{X}_0$ with respect to the Euclidean norm. Let $\mathcal{Z}\subseteq \Omega$ be a convex set with $\mathcal{X}_0\subseteq\mathcal{Z}$  and $M$ be a constant such that for any $\mathbf{x}\in \mathcal{Z}$, $\norm{F_{2}(\mathbf{p}_0,\mathbf{x})-F_2(\mathbf{p}_0,\mathcal{P}(\mathbf{x}))}_2\leq M$,   then we have that for any $\mathbf{x}\in\mathcal{Z}$,
			\begin{equation}\label{eq: an estimation for linearization}
				\norm{LF_{\mathcal{P}(\mathbf{x})}(\mathbf{x})-F(\mathbf{p}_0,\mathbf{x})}\leq M\norm{\mathbf{x}-\mathcal{P}(\mathbf{x})}.
			\end{equation}
		\end{lemma}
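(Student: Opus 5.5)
Write $\mathbf{x}_0:=\mathcal{P}(\mathbf{x})$ for a fixed $\mathbf{x}\in\mathcal{Z}$. By definition of the linearization, $LF_{\mathbf{x}_0}(\mathbf{x})-F(\mathbf{p}_0,\mathbf{x})=F(\mathbf{p}_0,\mathbf{x}_0)+F_2(\mathbf{p}_0,\mathbf{x}_0)(\mathbf{x}-\mathbf{x}_0)-F(\mathbf{p}_0,\mathbf{x})$. I will control this with the fundamental theorem of calculus along the segment from $\mathbf{x}_0$ to $\mathbf{x}$, in the spirit of Remark~\ref{rmk:inverse_mapping_theorem_Holder_case}(4). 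Set $\mathbf{x}(\tau):=\mathbf{x}_0+\tau(\mathbf{x}-\mathbf{x}_0)$ for $\tau\in[0,1]$. Since $\mathcal{X}_0\subseteq\mathcal{Z}$ and $\mathcal{Z}$ is convex, both endpoints lie in $\mathcal{Z}$, so the whole segment lies in $\mathcal{Z}\subseteq\Omega$. Because $F(\mathbf{p}_0,\cdot)$ is differentiable on the open set $\Omega$ with continuous Jacobian $F_2(\mathbf{p}_0,\cdot)$, the map $\tau\mapsto F(\mathbf{p}_0,\mathbf{x}(\tau))$ is $\mathcal{C}^1$ and
\begin{equation*}
F(\mathbf{p}_0,\mathbf{x})-F(\mathbf{p}_0,\mathbf{x}_0)=\int_0^1F_2(\mathbf{p}_0,\mathbf{x}(\tau))(\mathbf{x}-\mathbf{x}_0)\,d\tau .
\end{equation*}

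Substituting this into the difference gives
\begin{equation*}
LF_{\mathbf{x}_0}(\mathbf{x})-F(\mathbf{p}_0,\mathbf{x})=\int_0^1\bigl(F_2(\mathbf{p}_0,\mathbf{x}_0)-F_2(\mathbf{p}_0,\mathbf{x}(\tau))\bigr)(\mathbf{x}-\mathbf{x}_0)\,d\tau ,
\end{equation*}
so it remains to bound $\norm{F_2(\mathbf{p}_0,\mathbf{x}(\tau))-F_2(\mathbf{p}_0,\mathbf{x}_0)}_2\leq M$ for every $\tau\in[0,1]$. The hypothesis gives $\norm{F_2(\mathbf{p}_0,\mathbf{z})-F_2(\mathbf{p}_0,\mathcal{P}(\mathbf{z}))}_2\leq M$ for all $\mathbf{z}\in\mathcal{Z}$, so I apply it with $\mathbf{z}=\mathbf{x}(\tau)\in\mathcal{Z}$. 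The key observation is that $\mathcal{P}(\mathbf{x}(\tau))=\mathbf{x}_0$. This holds because $\mathcal{X}_0$ is closed and convex, so the projection onto it is characterized by the variational inequality $\inner{\mathbf{x}-\mathbf{x}_0,\mathbf{y}-\mathbf{x}_0}\leq 0$ for all $\mathbf{y}\in\mathcal{X}_0$. This inequality is preserved after scaling $\mathbf{x}-\mathbf{x}_0$ by $\tau\ge 0$, which is the standard fact that the projection is constant along the normal ray.

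With that bound in hand, the triangle inequality for integrals yields $\norm{LF_{\mathbf{x}_0}(\mathbf{x})-F(\mathbf{p}_0,\mathbf{x})}\leq\int_0^1M\norm{\mathbf{x}-\mathbf{x}_0}\,d\tau=M\norm{\mathbf{x}-\mathcal{P}(\mathbf{x})}$, which is \eqref{eq: an estimation for linearization}. The only step needing care is the projection identity $\mathcal{P}(\mathbf{x}(\tau))=\mathcal{P}(\mathbf{x})$. Without it, the hypothesis, which compares $F_2$ at $\mathbf{z}$ and at $\mathcal{P}(\mathbf{z})$, would not directly compare $F_2$ at $\mathbf{x}(\tau)$ with $F_2$ at $\mathbf{x}_0$. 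The convexity and compactness of $\mathcal{X}_0$ assumed in Theorem~\ref{thm:holder-ge} is exactly what makes this identity available.
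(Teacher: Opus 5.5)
Your proof is correct and follows essentially the same route as the paper. Both arguments run along the segment from $\mathcal{P}(\mathbf{x})$ to $\mathbf{x}$ inside the convex set $\mathcal{Z}$, use $\mathcal{P}(\mathbf{x}_\tau)=\mathcal{P}(\mathbf{x})$, and apply the Newton--Leibniz formula. The only differences are presentational: the paper works with $g(\tau)=F(\mathbf{p}_0,\mathbf{x}_\tau)-LF_{\mathcal{P}(\mathbf{x})}(\mathbf{x}_\tau)$, where $g(0)=\mathbf{0}$, and it cites the projection invariance from Bauschke--Combettes instead of deriving it from the variational inequality as you do.
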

		\begin{proof}{Proof}
			Since $\mathcal{X}_0$ is compact and convex, $\mathcal{P}$ is well-defined and it is well known that $\mathcal{P}$ is Lipschitz continuous with modulus $1$ (cf.\  \cite[Proposition~4.8]{bauschke2011convex}). For any $\mathbf{x}\in\mathcal{Z}$, by the convexity of $\mathcal{Z}$, $\mathbf{x}_{\tau}:=\tau\mathbf{x}+(1-\tau)\mathcal{P}(\mathbf{x})\in\mathcal{Z}$ with $\tau\in[0,1]$, and by \cite[Proposition~3.21]{bauschke2011convex} that $\mathcal{P}(\mathbf{x}_\tau)=\mathcal{P}(\mathbf{x})$ for any $\tau\in [0,1]$.
			
			Let $g(\tau):=F(\mathbf{p}_0,\mathbf{x}_\tau)-LF_{\mathcal{P}(\mathbf{x})}(\mathbf{x}_\tau)$ for $\tau\in [0,1]$. Clearly, $g(0)=\mathbf{0}$ and $g$ is continuously differentiable with $g'(\tau)=(F_2(\mathbf{p}_0,\mathbf{x}_\tau)-F_2(\mathbf{p}_0,\mathcal{P}(\mathbf{x})))(\mathbf{x}-\mathcal{P}(\mathbf{x}))$. By the Newton-Leibniz formula and the assumptions, we get that
			$$\begin{aligned}
				\norm{LF_{\mathcal{P}(\mathbf{x})}(\mathbf{x})-F(\mathbf{p}_0,\mathbf{x})}&=\norm{g(1)}=\norm{g(1)-g(0)}=\norm{\int_{0}^1g'(\tau)d\tau}\leq \int_{0}^1\norm{g'(\tau)}d\tau\\&\leq \int_{0}^1\norm{F_2(\mathbf{p}_0,\mathbf{x}_\tau)-F_2(\mathbf{p}_0,\mathcal{P}(\mathbf{x}))}_2\norm{\mathbf{x}-\mathcal{P}(\mathbf{x})}d\tau\leq M \norm{\mathbf{x}-\mathcal{P}(\mathbf{x})}.
			\end{aligned}$$
			Consequently, \eqref{eq: an estimation for linearization} holds.
		\end{proof}
		
		\begin{lemma}\label{lemma: closeness of a set defined by a set-valued map}
			Let $\mathcal{K}_1\subseteq\mathbb{R}^n$, $\mathcal{K}_2\subseteq\mathbb{R}^m$, $\mathcal{F}:\mathcal{K}_1\rightrightarrows \mathbb{R}^p$ be outer semi-continuous on $\mathcal{K}_1$ and $F:\mathcal{K}_1\times \mathcal{K}_2\longrightarrow \mathbb{R}^p$ be continuous. Define $\mathcal{Y}:=\{(\mathbf{x},\mathbf{y})\in \mathcal{K}_1\times \mathcal{K}_2: F(\mathbf{x},\mathbf{y})\in \mathcal{F}(\mathbf{x})\}$. Then $\mathcal{Y}$ is a closed set relative to $\mathcal{K}_1\times\mathcal{K}_2$.
		\end{lemma}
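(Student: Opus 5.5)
We argue sequentially: take a point of $\mathcal{K}_1\times\mathcal{K}_2$ that is a limit of points of $\mathcal{Y}$ and show that it lies in $\mathcal{Y}$. Closedness relative to $\mathcal{K}_1\times\mathcal{K}_2$ means exactly that every such limit point belongs to $\mathcal{Y}$.

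Let $(\mathbf{x}^k,\mathbf{y}^k)\in\mathcal{Y}$ with $(\mathbf{x}^k,\mathbf{y}^k)\to(\bar{\mathbf{x}},\bar{\mathbf{y}})\in\mathcal{K}_1\times\mathcal{K}_2$. We must show $F(\bar{\mathbf{x}},\bar{\mathbf{y}})\in\mathcal{F}(\bar{\mathbf{x}})$.

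Set $\mathbf{z}^k:=F(\mathbf{x}^k,\mathbf{y}^k)$. By the definition of $\mathcal{Y}$, we have $\mathbf{z}^k\in\mathcal{F}(\mathbf{x}^k)$. Since $F$ is continuous on $\mathcal{K}_1\times\mathcal{K}_2$ and the limit point lies in this set, $\mathbf{z}^k\to\bar{\mathbf{z}}:=F(\bar{\mathbf{x}},\bar{\mathbf{y}})$.

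Thus $\mathbf{x}^k\in\mathcal{K}_1$, $\mathbf{x}^k\to\bar{\mathbf{x}}$, $\mathbf{z}^k\in\mathcal{F}(\mathbf{x}^k)$ and $\mathbf{z}^k\to\bar{\mathbf{z}}$. By \eqref{eq: outer and inner limits} with $\Omega_0=\mathcal{K}_1$, this says $\bar{\mathbf{z}}\in\limsup_{\mathbf{x}\to\bar{\mathbf{x}},\mathbf{x}\in\mathcal{K}_1}\mathcal{F}(\mathbf{x})$.

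Since $\bar{\mathbf{x}}\in\mathcal{K}_1$ and $\mathcal{F}$ is outer semi-continuous on $\mathcal{K}_1$, Definition~\ref{defn: continuity of solution sets} gives that this outer limit is contained in $\mathcal{F}(\bar{\mathbf{x}})$. Hence $F(\bar{\mathbf{x}},\bar{\mathbf{y}})\in\mathcal{F}(\bar{\mathbf{x}})$, that is, $(\bar{\mathbf{x}},\bar{\mathbf{y}})\in\mathcal{Y}$.

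The argument is routine. The only point needing care is to use that the limit point lies in $\mathcal{K}_1\times\mathcal{K}_2$. This is what makes $F$ defined and continuous at $(\bar{\mathbf{x}},\bar{\mathbf{y}})$, and it also makes the outer semi-continuity of $\mathcal{F}$ applicable at $\bar{\mathbf{x}}$. It is also why the conclusion is only closedness relative to $\mathcal{K}_1\times\mathcal{K}_2$.
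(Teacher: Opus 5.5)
Your proof is correct and matches the paper's argument: the same sequence $\mathbf{z}^k=F(\mathbf{x}^k,\mathbf{y}^k)\in\mathcal{F}(\mathbf{x}^k)$ and the same continuity step giving $\mathbf{z}^k\to F(\bar{\mathbf{x}},\bar{\mathbf{y}})$. The only difference is the last step: you conclude directly from the sequential definition of the outer limit relative to $\mathcal{K}_1$, whereas the paper cites a characterization (closedness of $\gph(\mathcal{F})$) to get the same conclusion, so your version is slightly more self-contained.
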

		\begin{proof}{Proof}
			For any convergent sequence $\{(\mathbf{x}^k,\mathbf{y}^k)\}_{k\geq 1}\subseteq \mathcal{Y}$ with $\lim_{k\to+\infty}(\mathbf{x}^k,\mathbf{y}^k)=(\mathbf{x}^0,\mathbf{y}^0)\in\mathcal{K}_1\times \mathcal{K}_2$, we have $F(\mathbf{x}^0,\mathbf{y}^0)$ and $\mathcal{F}(\mathbf{x}^0)$ are well-defined. Denote $\mathbf{z}^k=F(\mathbf{x}^k,\mathbf{y}^k)$ for $k\geq 0$.  By definition, $\mathbf{z}^k\in \mathcal{F}(\mathbf{x}^k)$ for $k\geq 1$, that is, $(\mathbf{x}^k,\mathbf{z}^k)\in\gph(\mathcal{F})$. Since $F$ is continuous, then 
			\[
			\mathbf{z}^0:=F(\mathbf{x}^0,\mathbf{y}^0)=\lim_{k\to+\infty}F(\mathbf{x}^k,\mathbf{y}^k)=\lim_{k\to+\infty}\mathbf{z}^k.
			\]
			Since $\mathcal{F}$ is outer semi-continuous on $\mathcal{K}_1$, then by \cite[Theorem~3B.2]{Rockafellar2014implicitfunctionsandsolutionmapping}, we get that $\gph(\mathcal{F})$ is closed. Therefore $(\mathbf{x}^0,\mathbf{z}^0)\in\gph(\mathcal{F})$. This implies that $\mathbf{z}^0=F(\mathbf{x}^0,\mathbf{y}^0)\in\mathcal{F}(\mathbf{x}^0)$. Thus $(\mathbf{x}^0,\mathbf{y}^0)\in\mathcal{Y}$ and hence $\mathcal{Y}$ is closed relative to $\mathcal{K}_1\times\mathcal{K}_2$.
		\end{proof}
		\begin{lemma}\label{lem:an_estimation}
			Let $\alpha>0$ and $x,y\geq0$.  
			\begin{enumerate}
				\item If $\alpha\in (0,1)$, then \begin{equation}\label{eq:ineq_for_alpha_less_1}
					(x+y)^\alpha \leq x^\alpha +y^\alpha.
				\end{equation}
				\item If $\alpha\geq 1$, then 
				\begin{equation}\label{eq:ineq_for_alpha_less_2}
					(x+y)^\alpha \geq x^\alpha +y^\alpha.
				\end{equation}
			\end{enumerate}
		\end{lemma}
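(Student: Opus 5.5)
The plan is to reduce both inequalities to a statement about a single variable. The degenerate cases are immediate. If $x=0$ or $y=0$, both \eqref{eq:ineq_for_alpha_less_1} and \eqref{eq:ineq_for_alpha_less_2} hold with equality, since $0^\alpha=0$ for $\alpha>0$. So we assume $x,y>0$ and set $s:=x+y>0$. Dividing both sides by $s^\alpha$ and writing $t:=x/s\in(0,1)$, so that $y/s=1-t$, each inequality becomes a comparison of
\[
\phi(t):=t^\alpha+(1-t)^\alpha
\]
with $1$ on $(0,1)$. Part (1) is then equivalent to $\phi(t)\geq 1$, and part (2) to $\phi(t)\leq 1$.

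The key step is the elementary observation that for $t\in(0,1)$ we have $t^\alpha\geq t$ when $\alpha\in(0,1)$ and $t^\alpha\leq t$ when $\alpha\geq 1$. This holds because $t^\alpha/t=t^{\alpha-1}$, which is at least $1$ when $\alpha-1<0$ and at most $1$ when $\alpha-1\geq 0$, using $0<t<1$. Applying the same observation to $1-t\in(0,1)$ and adding the two inequalities gives $\phi(t)\geq t+(1-t)=1$ in case (1) and $\phi(t)\leq 1$ in case (2). Multiplying back by $s^\alpha$ yields the claims.

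As an alternative, one could argue by concavity or convexity of $u\mapsto u^\alpha$ together with $f(0)=0$, which gives subadditivity or superadditivity. The normalization argument above is shorter, so I will use it.

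No step here is a genuine obstacle. The only care needed is to treat the boundary cases $x=0$ or $y=0$ separately, so that the division by $s$ and the use of $t^{\alpha-1}$ (which is undefined at $t=0$ when $\alpha<1$) are legitimate.
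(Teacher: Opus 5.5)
Your proof is correct, but it takes a different route from the paper.

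The paper fixes $y\geq 0$ and studies $H_y(x)=(x+y)^\alpha-x^\alpha-y^\alpha$ on $[0,+\infty)$. For $\alpha<1$ it uses that $x\mapsto x^{\alpha-1}$ is decreasing to get $H_y'(x)=\alpha(x+y)^{\alpha-1}-\alpha x^{\alpha-1}\leq 0$ on $(0,+\infty)$. Continuity at $0$ then gives $H_y(x)\leq H_y(0)=0$. The case $\alpha>1$ is only asserted to follow ``similarly''.

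You instead use the degree-$\alpha$ homogeneity of both sides to normalize to $x+y=1$. The pointwise comparisons $t^\alpha\geq t$ (for $\alpha<1$) and $t^\alpha\leq t$ (for $\alpha\geq 1$) on $(0,1)$, applied to $t$ and $1-t$ and summed, settle both cases at once with no calculus.

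What your version buys: it is shorter and treats $\alpha\geq 1$ explicitly rather than by analogy. It also avoids differentiating $x^\alpha$ near $0$, where the derivative blows up for $\alpha<1$; that is why the paper must work on the open half-line and pass to $x=0$ by continuity.

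What the paper's version buys: the monotone-derivative argument never uses homogeneity. It therefore proves subadditivity for any function $f$ on $[0,+\infty)$ with $f(0)\geq 0$ and nonincreasing derivative. That is essentially the concavity alternative you mention in passing.
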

		\begin{proof}{Proof}
			If $\alpha=1$, then the assertion is clear. In the following, we give a proof for the case $0<\alpha<1$. A similar proof works for the case $\alpha> 1$. 
			
			Let $y\geq 0$ and $H_y(x):=(x+y)^\alpha - x^\alpha -y^\alpha$ for $x\geq 0$. Then $H_y$ is continuous on $[0,+\infty)$ and smooth on $(0,\infty)$ with  $H_y'(x)=\alpha (x+y)^{\alpha-1}-\alpha x^{\alpha-1}$. Note that the function $x\mapsto x^{\alpha-1}$ is decreasing. Since $y\geq 0$, then $H_y'(x)\leq 0$ for any $x>0$. Thus $H_y$ is decreasing on $(0,+\infty)$. Therefore $H_y(x)\leq H_y(0)=0$ for any $x\geq 0$. Thus~\eqref{eq:ineq_for_alpha_less_1} holds. 
			
		\end{proof}
		\begin{lemma}\label{lem:inequality}
			Let $\alpha\geq 1,\lambda,a>0$ with $ \lambda a(a+1)^{\alpha-1}<1$. Let $y\geq 0$ be a parameter, define
			\begin{equation}\label{eq: key inequality}
				\mathcal{W}_y:=\{x\geq 0:x\leq \lambda(ax+y)^\alpha\},
			\end{equation} 
			and $C=C(a,\lambda,\alpha):=\frac{\lambda (1+a)^{\alpha-1}}{1-a\lambda(a+1)^{\alpha-1}}>0$. Then for any $y\in \mathbb R_+$, $W_y\bigcap [0,1]\subseteq [0,C y^{\alpha}]$.  
		\end{lemma}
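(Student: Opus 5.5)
Fix $y\ge 0$ and $x\in \mathcal{W}_y\cap[0,1]$, so that $0\le x\le 1$ and $x\le \lambda(ax+y)^\alpha$. The goal is to turn this implicit inequality into an explicit bound $x\le Cy^\alpha$. Since $\alpha\ge1$, the map $t\mapsto t^\alpha$ is convex, and I will use it to split $(ax+y)^\alpha$ into a part linear in $x$ and a part of order $y^\alpha$. A plain mean value estimate would put a factor depending on $y$ in front of $x$, which is bad when $y$ is large. The normalization in $C$, with the factor $(1+a)^{\alpha-1}$, points instead to writing $ax+y$ as a convex combination with weights $\frac{a}{1+a}$ and $\frac{1}{1+a}$.

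Concretely, write
\[
ax+y=\frac{a}{1+a}\bigl((1+a)x\bigr)+\frac{1}{1+a}\bigl((1+a)y\bigr).
\]
Jensen's inequality for the convex function $t^\alpha$ then gives
\[
(ax+y)^\alpha\le \frac{a}{1+a}(1+a)^\alpha x^\alpha+\frac{1}{1+a}(1+a)^\alpha y^\alpha=(1+a)^{\alpha-1}\bigl(a x^\alpha+y^\alpha\bigr).
\]
Next I use $x\in[0,1]$ and $\alpha\ge1$, which give $x^\alpha\le x$. Combining the two estimates,
\[
x\le \lambda(1+a)^{\alpha-1}a\,x+\lambda(1+a)^{\alpha-1}y^\alpha .
\]
By hypothesis $1-a\lambda(1+a)^{\alpha-1}>0$. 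Moving the $x$-term to the left and dividing by this positive quantity yields $x\le Cy^\alpha$ with exactly the stated $C$. Together with $x\ge0$ this gives $x\in[0,Cy^\alpha]$, and hence $\mathcal{W}_y\cap[0,1]\subseteq[0,Cy^\alpha]$.

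The only real obstacle is finding the convex splitting that produces the constant $(1+a)^{\alpha-1}$ rather than a cruder $2^{\alpha-1}$. Once that is in place, the restriction $x\le1$, used through $x^\alpha\le x$, is what makes the inequality linear in $x$, and the smallness hypothesis closes the argument. Lemma~\ref{lem:an_estimation} is not needed here.
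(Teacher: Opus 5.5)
Your proposal is correct and follows essentially the same route as the paper: the convexity (Jensen) splitting $(ax+y)^\alpha\le a(a+1)^{\alpha-1}x^\alpha+(a+1)^{\alpha-1}y^\alpha$, then $x^\alpha\le x$ on $[0,1]$, then rearranging using $1-a\lambda(a+1)^{\alpha-1}>0$. The only cosmetic difference is that your argument covers $\alpha=1$ uniformly, whereas the paper sets that case aside as clear.
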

		\begin{proof}{Proof}
			If $\alpha=1$, then the assertion is clear. In the following, we assume that $\alpha> 1$.
			
			Note that $x\mapsto x^\alpha$ is a convex function on $\mathbb R_+$ when $\alpha>1$. Then for any $x,y\geq 0$, $(\frac{ax+y}{a+1})^\alpha\leq \frac{a}{a+1} x^\alpha +\frac{1}{a+1}y^\alpha$, which implies that $(ax+y)^\alpha\leq a(a+1)^{\alpha-1} x^\alpha+(a+1)^{\alpha-1}y^\alpha$. Also note that when $0\leq x\leq 1$, $x^\alpha\leq x$.  Therefore for any $x\in W_y\cap [0,1]$, 
			\[
			x\leq \lambda(ax+y)^\alpha\leq \lambda a(a+1)^{\alpha-1} x^\alpha+\lambda(a+1)^{\alpha-1}y^\alpha\leq \lambda a(a+1)^{\alpha-1} x+\lambda(a+1)^{\alpha-1}y^\alpha.
			\]
			This indicates that  
			$x\leq C y^\alpha$ with $C=C(a,\lambda,\alpha)$. The conclusion then follows. 
		\end{proof}
		
		\begin{lemma}\label{lem:inequality-para}
			Let $\alpha\geq 1,\lambda>0$ and $C_{\lambda,\alpha}(a):=C(a,\lambda,\alpha)=\frac{\lambda (1+a)^{\alpha-1}}{1-a\lambda(a+1)^{\alpha-1}}$ for $a\in S_{\lambda,\alpha}:=\{a> 0: a(a+1)^{\alpha-1}\lambda<1 \}$ . For any $\epsilon>0$, there exists $a\in S_{\lambda,\alpha}$ such that $C_{\lambda,\alpha}(a)\leq\lambda+\epsilon$. 
		\end{lemma}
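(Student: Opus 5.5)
The claim is a continuity statement about the explicit function $C_{\lambda,\alpha}(a)=\frac{\lambda(1+a)^{\alpha-1}}{1-a\lambda(a+1)^{\alpha-1}}$ on the set $S_{\lambda,\alpha}$. The plan is to send $a\to 0^+$: as $a$ shrinks, the numerator tends to $\lambda$ and the denominator tends to $1$, so $C_{\lambda,\alpha}(a)\to\lambda$.

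First we check that $S_{\lambda,\alpha}$ contains an interval $(0,a_0)$. The map $h(a):=a(a+1)^{\alpha-1}\lambda$ is continuous on $[0,\infty)$ with $h(0)=0$. Since $\alpha\geq 1$, it is nondecreasing as a product of nonnegative nondecreasing factors. Hence there is $a_0>0$ with $h(a)<1$ for all $a\in(0,a_0)$. Concretely, one may take $a_0\leq 1$ with $\lambda a_0 2^{\alpha-1}<1$, since $h(a)\leq \lambda a 2^{\alpha-1}$ for $a\leq 1$. Thus $(0,a_0)\subseteq S_{\lambda,\alpha}$.

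Next, on $[0,a_0)$ the numerator $\lambda(1+a)^{\alpha-1}$ and the denominator $1-h(a)$ are continuous, and the denominator is positive. So $C_{\lambda,\alpha}$ extends continuously to $a=0$ with value $C_{\lambda,\alpha}(0)=\lambda$. By continuity at $0$, for the given $\epsilon>0$ there is $a\in(0,a_0)\subseteq S_{\lambda,\alpha}$ with $|C_{\lambda,\alpha}(a)-\lambda|\leq\epsilon$, and in particular $C_{\lambda,\alpha}(a)\leq \lambda+\epsilon$.

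If an explicit choice is preferred, restrict to $a\leq 1$ and use the bounds $(1+a)^{\alpha-1}\leq 1+a(2^{\alpha-1}-1)$ (by convexity of $t\mapsto(1+t)^{\alpha-1}$ on $[0,1]$ when $\alpha\geq 2$, and by concavity giving the bound $\leq 1+(\alpha-1)a$ when $1\leq\alpha<2$) and $1-h(a)\geq 1-\lambda 2^{\alpha-1}a$. Then take $a$ small enough that the resulting rational upper bound is at most $\lambda+\epsilon$. No step is a real obstacle; the only point needing care is that $a$ stays inside $S_{\lambda,\alpha}$, which the first step guarantees.
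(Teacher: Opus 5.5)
Your proof is correct and follows the paper's own argument: both note that $a\mapsto a\lambda(a+1)^{\alpha-1}$ is continuous with value $0$ at $a=0$, so $C_{\lambda,\alpha}$ is defined on some interval $(0,a_0)\subseteq S_{\lambda,\alpha}$ and extends continuously to $0$ with $C_{\lambda,\alpha}(0)=\lambda$, and then take $a$ small enough. Your explicit choice of $a_0$ and the optional convexity/concavity bounds add a quantitative version of the same idea without changing the argument.
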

		\begin{proof}{Proof}
			The map $h:a\mapsto a\lambda(a+1)^{\alpha-1}$ is continuous on $[0,+\infty)$ and $h(0)=0$. Then $C_{\lambda,\alpha}$ is well-defined in a neighborhood of $0$ with respect to $[0,+\infty)$ and continuous at $0$ with $C_{\lambda,\alpha}(0)=\lambda$, by continuity, for $\epsilon>0$, there exists $a\in S_{\lambda,\alpha}$  such that $C_{\lambda,\alpha}(a)\leq \lambda+\epsilon$. 
		\end{proof}
		
		\subsection{Proof of Theorem~\ref{thm:holder-ge}}\label{sec:proof-ge}
		
		\begin{proof}{Proof}
			Let $\bar\eta\in(0,\eta]$. 
			Let $\mathcal{P}:\mathbb{R}^n\longrightarrow \mathcal{X}_0$ be the projection onto $\mathcal{X}_0$ with respect to the Euclidean norm. Since $\mathcal{X}_0$ is compact and convex, $\mathcal{P}$ is well-defined and single-valued (cf.\ \ \cite[Proposition~4.8]{bauschke2011convex}). In the following, we will use $\mathcal{L}_{\mathcal{P}(\mathbf{\cdot})}^{-1}$ to approximate $(F(\mathbf{p},\cdot)+\mathcal{T})^{-1}$, in a similar spirit of the proof for \cite[Theorem~1]{Robinson1979_GenEqI}. 
			
			In the sequel, we split the proof into three parts and each part is subdivided into several pieces whenever necessary.
			
			\textbf{Part I. Nonemptiness of $\mathcal A_{\delta}$.}
			
			Let $\delta\geq 0$, and define a set-valued map  $H_{\mathbf{p},\delta}:\mathcal{X}_\delta\rightrightarrows \mathbb{R}^n$ by
			\begin{equation*}
				H_{\mathbf{p},\delta}(\mathbf{x}):=\mathcal{IL}_{\mathcal{P}(\mathbf{x}),\gamma}(LF_{\mathcal{P}(\mathbf{x})}(\mathbf{x})-F(\mathbf{p},\mathbf{x})).
			\end{equation*}  
			In the following, we show that there exists a $\bar\delta\in(0,\gamma]$ such that for any $\delta\in(0,\bar\delta]$, there exists a companion neighborhood $\mathcal{U}_{\delta}$ of $\mathbf{p}_0$ such that for any $\mathbf{p}\in\mathcal{U}_\delta$, there hold
			\begin{enumerate}[label=(C\arabic*),ref=(C\arabic*)]
				\item $H_{\mathbf{p},\delta}(\mathbf{x})$ is nonempty, closed and convex for any $\mathbf{x}\in \mathcal{X}_\delta$.\label{item:con_a}
				\item $H_{\mathbf{p},\delta}(\mathbf{x})\subseteq\mathcal{X}_\delta$ for any $\mathbf{x}\in \mathcal{X}_\delta$. \label{item:con_b}
				\item $H_{\mathbf{p},\delta}$ is outer semi-continuous. \label{item:con_c}\footnote{The definition of ``upper semi-continuous" in \cite{kakutani1941generalization} is just the same as ``outer semi-continuous" defined in our paper (cf.\ Definition~\ref{defn: continuity of solution sets}).}
			\end{enumerate}
			Consequently, by the Kakutani fixed point theorem \cite[Theorem~1]{kakutani1941generalization}, for any $\mathbf{p}\in\mathcal{U}_\delta$, there exists $\mathbf{x}_\mathbf{p}\in \mathcal{X}_\delta$ such that $\mathbf{x}_\mathbf{p}\in H_{\mathbf{p},\delta}(\mathbf{x}_\mathbf{p})$. Therefore, $LF_{\mathcal{P}(\mathbf{x}_\mathbf{p})}(\mathbf{x}_\mathbf{p})-F(\mathbf{p},\mathbf{x}_\mathbf{p})\in LF_{\mathcal{P}(\mathbf{x}_\mathbf{p})}(\mathbf{x}_\mathbf{p})+\mathcal{T}(\mathbf{x}_\mathbf{p})$. Hence 
			\begin{equation}\label{eq:con_of_fixed_point}
				\mathbf{0}\in F(\mathbf{p},\mathbf{x}_\mathbf{p})+\mathcal{T}(\mathbf{x}_\mathbf{p})\ \ \text{i.e., } \mathbf{x}_\mathbf{p}\in\mathcal{A}_{\delta}(\mathbf{p}).
			\end{equation}

			\textbf{Part I.1: Verification of \ref{item:con_a}.}

			\textbf{Step 1. Estimation on $\norm{F(\mathbf{p}_0,\mathbf{x})-LF_{\mathcal{P}(\mathbf{x})}(\mathbf{x})}$.}
			
			Define for $\delta\in[0,\gamma]$ that 
			\begin{equation}\label{eq:cdelta}
				c(\delta):=\max_{\mathbf{x}\in \mathcal{X}_\delta}\norm{F_2(\mathbf{p}_0,\mathbf{x})-F_2({\mathbf{p}_0,\mathcal{P}(\mathbf{x})})}_2.  	
			\end{equation}
			Since $F_2$ is continuous,  $\mathcal{P}$ is Lipschitz continuous and  $\mathcal{X}_{\delta}$ is compact, then $c(\delta)$ is well-defined and $c(0)=0$. In the following, we show that $c$ is continuous at $0$. 
			
			Let $\epsilon>0$ be given and arbitrary. Note that $\norm{F_2(\mathbf{p}_0,\cdot)-F_2(\mathbf{p}_0,\mathcal{P}(\cdot))}_2$ is continuous and for any $\mathbf{x}\in\mathcal{X}_0$, $\norm{F_2(\mathbf{p}_0,\mathbf{x})-F_2(\mathbf{p}_0,\mathcal{P}(\mathbf{x}))}_2=0$. Then for every $\mathbf{x}\in\mathcal{X}_0$, there exists  $\delta_{\mathbf{x}}>0$ such that for any $\bar{\mathbf{x}}\in\mathbb{R}^n$ with $\norm{\mathbf{x}-\bar{\mathbf{x}}}<\delta_{\mathbf{x}}$,  $\norm{F_2(\mathbf{p}_0,\bar{\mathbf{x}})-F_2(\mathbf{p}_0,\mathcal{P}(\bar{\mathbf{x}}))}_2<\epsilon$. Since $\mathcal{X}_0$ is compact and $\bigcup_{\mathbf{x}\in\mathcal{X}_0}\mathcal{N}_{\mathbf{x}}\supseteq \mathcal{X}_0$ with $\mathcal{N}_{\mathbf{x}}:=\{\bar{\mathbf{x}}\in\mathbb{R}^n:\norm{\mathbf{x}-\bar{\mathbf{x}}}<\frac{1}{3}\delta_{\mathbf{x}}\}$, then there exist $\mathbf{x}_1,\dots,\mathbf{x}_K\in \mathcal{X}_0$ such that $\bigcup_{k=1}^K\mathcal{N}_{\mathbf{x}_k}\supseteq \mathcal{X}_0$. Let $\tilde\delta_\epsilon:=\frac{1}{3}\min\{\delta_{\mathbf{x}_1},\dots,\delta_{\mathbf{x}_K}\}>0$. Then by definition of $\mathcal{X}_{\tilde\delta_{\epsilon}}$, we get that for any $\mathbf{x}\in \mathcal{X}_{\tilde\delta_{\epsilon}}$, $\norm{\mathbf{x}-\mathcal{P}(\mathbf{x})}\leq\tilde\delta_{\epsilon}$ and $\mathcal{P}(\mathbf{x})\in \mathcal{N}_{\mathbf{x}_k}$ for some $k\in\{1,\dots,K\}$. Therefore 
			\[
			\norm{\mathbf{x}-\mathbf{x}_k}\leq \norm{\mathcal{P}(\mathbf{x})-\mathbf{x}_k}+\norm{\mathbf{x}-\mathcal{P}(\mathbf{x})} <\frac{1}{3}\delta_{\mathbf{x}_k}+\tilde\delta_\epsilon<\delta_{\mathbf{x}_k}.
			\]
			Thus, $\norm{F_2(\mathbf{p}_0,\mathbf{x})-F_2(\mathbf{p}_0,\mathcal{P}(\mathbf{x}))}_2<\epsilon$, and hence $c(\tilde\delta_\epsilon)\leq \epsilon$. By the monotonicity of $c$, the continuity of $c$ at $0$ follows.

			Hence, we can choose $\delta_0\in(0,\gamma]$ such that for any $\tau\in [0,\delta_0]$, 
			\begin{equation}\label{eq: estimation of c}
				\tau<\frac{1}{2},\quad \tau c(\tau)< \frac{\bar\eta}{2},\quad \lambda^{\frac{1}{\alpha}}c(\tau)<\frac{1}{2}.
			\end{equation} 
			
			Moreover, when $0<\alpha<1$, by assumption \ref{item:ass_4}, we get that for any $\delta\in(0,\gamma]$
			\begin{equation}\label{eq:est_if_alp_leq_1}
				c(\delta)\leq \max_{\mathbf{x}\in\mathcal{X}_\delta}\{\tilde{\lambda}\norm{\mathbf{x}-\mathcal{P}(\mathbf{x})}^{\frac{1}{\alpha}-1}\}\leq \tilde{\lambda}\delta^{\frac{1}{\alpha}-1}\leq \frac{1}{2}\lambda^{-\frac{1}{\alpha}}\delta^{\frac{1}{\alpha}-1}.
			\end{equation} 
			
			\textbf{Step 2: Estimation on $\norm{F(\mathbf{p},\mathbf{x})-F(\mathbf{p}_0,\mathbf{x})}$.}	
			
			Given $\delta\in [0,\gamma]$, define  
			\begin{equation}\label{eq:mudelta}
				\mu_{\delta}(\mathbf{p}):=\max_{\mathbf{x}\in\mathcal{X}_\delta}\norm{F(\mathbf{p},\mathbf{x})-F(\mathbf{p}_0,\mathbf{x})}.
			\end{equation} 
			Since $\mathcal{X}_\delta$ is fixed and compact, and $F$ is continuous, $\mu_{\delta}$ is well-defined for all $\mathbf{p}\in\Xi$ and $\mu_{\delta}(\mathbf{p}_0)=0$. Moreover, by ~\cite[Theorem~3B.5]{Rockafellar2014implicitfunctionsandsolutionmapping}, $\mu_\delta$ is continuous at $\mathbf{p}_0$. Thus there exists a neighborhood $\mathcal{U}_{\delta}$ of $\mathbf{p}_0$ such that for each $\mathbf{p}\in\mathcal{U}_\delta$,
			\begin{equation}\label{eq: estimation of mu}
				\mu_{\delta}(\mathbf{p})<\frac{\bar\eta}{2},\ \text{and } \lambda^{\frac{1}{\alpha}}\mu_{\delta}(\mathbf{p})\leq \frac{1}{2}\delta^{\frac{1}{\alpha}}.
			\end{equation} 
			Thus, for any $\mathbf{p}\in \mathcal{U}_{\delta}$ and $\mathbf{x}\in\mathcal{X}_\delta$, 
			\begin{equation}\label{eq: estimation 2}
				\norm{F(\mathbf{p},\mathbf{x})-F(\mathbf{p}_0,\mathbf{x})}\leq \mu_{\delta}(\mathbf{p})<\frac{\bar\eta}{2}.
			\end{equation}
			Note that when $\delta\leq\delta_0$, we have that
			\begin{align}\label{eq: estimation 1}
				\norm{LF_{\mathcal{P}(\mathbf{x})}(\mathbf{x})-F(\mathbf{p},\mathbf{x})}&\leq \norm{F(\mathbf{p},\mathbf{x})-F(\mathbf{p}_0,\mathbf{x})}+\norm{F(\mathbf{p}_0,\mathbf{x})-LF_{\mathcal{P}(\mathbf{x})}(\mathbf{x})}\nonumber\\
				&\leq\mu_\delta(\mathbf p)+c(\delta)\norm{\mathbf{x}-\mathcal{P}(\mathbf{x})}\nonumber\\
				&\leq\mu_\delta(\mathbf p)+ c(\delta)\delta\\
				&<\bar\eta,\nonumber
			\end{align}
			where the second inequality follows from \eqref{eq: estimation 2} and Lemma~\ref{lem:estimation}, the third from the definition of $\mathcal{X}_\delta$, and the last from \eqref{eq: estimation of c} and \eqref{eq: estimation 2}. 
			
			In summary, let $\delta\leq\bar\delta:=\frac{\delta_0}{2}$ and $\mathbf{p}\in\mathcal{U}_\delta$, $LF_{\mathcal{P}(\mathbf{x})}(\mathbf{x})-F(\mathbf{p},\mathbf{x})\in \bar\eta \mathcal{B}\subseteq\eta \mathcal{B}$ for any $\mathbf x\in \mathcal X_\delta$. This, together with $\mathcal P(\mathbf x)\in \mathcal X_0$, implies from assumptions~\ref{item:ass_3} that $H_{\mathbf{p},\delta}(\mathbf{x})=\mathcal{IL}_{\mathcal{P}(\mathbf{x}),\gamma}(LF_{\mathcal{P}(\mathbf{x})}(\mathbf{x})-F(\mathbf{p},\mathbf{x}))$ is nonempty and convex.

			By the fact that $\mathcal{T}$ is outer semi-continuous and $LF_{\mathbf{x}_0}$ is a linear map, then clearly by the definition of outer semi-continuity (cf.\ \ Definition~\ref{defn: continuity of solution sets}) that $\mathcal{L}_{\mathbf{x}_0}=LF_{\mathbf{x}_0}+\mathcal{T}$ is outer semi-continuous. By~\cite[Theore 3B.2]{Rockafellar2014implicitfunctionsandsolutionmapping}, we have that $\mathcal{L}_{\mathbf{x}_0}^{-1}(\mathbf{y})$ is closed in $\mathcal{K}$, and hence $\mathcal{L}_{\mathbf{x}_0}^{-1}(\mathbf{y})$ is closed in $\mathbb{R}^n$ as $\mathcal{K}$ itself is closed.  This, together with the compactness of $\mathcal X_\gamma$ and the definition of $\mathcal{IL}_{\mathbf{x}_0,\gamma}$, it follows that $\mathcal{IL}_{\mathbf{x}_0,\gamma}(\mathbf y)$ is closed for any $\mathbf x_0\in\mathcal X_0$ and $\mathbf y\in\eta\mathcal B$.   
			
			Thus, $H_{\mathbf{p},\delta}(\mathbf{x})$ is nonempty, convex and closed for any $\mathbf{x}\in\mathcal{X}_{\delta}$. 
			
			\textbf{Part I.2: Verification of \ref{item:con_b}.}
			
			Recall the directed Hausdorff metric $\rho_{DH}$ given by \eqref{eq:direct-haus-def} for two sets in $\mathbb{R}^n$. By assumptions \ref{item:ass_1} and \ref{item:ass_2}, and Proposition~\ref{prop:equivalent}, we get that for  $\delta\leq\bar\delta=\frac{\delta_0}{2}$, $\mathbf{p}\in\mathcal{U}_\delta$, and $\mathbf{x}\in\mathcal{X}_\delta$, 
			\begin{equation}\label{eq: estimation 4}
				\begin{aligned}
					\rho_{DH}(H_{\mathbf{p},\delta}(\mathbf{x}),\mathcal{X}_0)&=\rho_{DH}(\mathcal{IL}_{\mathcal{P}(\mathbf{x}),\gamma}(LF_{\mathcal{P}(\mathbf{x})}(\mathbf{x})-F(\mathbf{p},\mathbf{x})),\mathcal{IL}_{\mathcal{P}(\mathbf{x}),\gamma}(\mathbf{0}))\\
					&\leq \lambda \norm{LF_{\mathcal{P}(\mathbf{x})}(\mathbf{x})-F(\mathbf{p},\mathbf{x})}^\alpha
					\leq \lambda (\mu_{\delta}(\mathbf{p})+c(\delta)\delta)^\alpha\leq \delta,
				\end{aligned}
			\end{equation}
			where the second inequality follows from \eqref{eq: estimation 1}, and the last from \eqref{eq: estimation of c}, \eqref{eq:est_if_alp_leq_1} and~\eqref{eq: estimation of mu}. Thus $H_{\mathbf{p},\delta}(\mathbf{x})\subseteq \mathcal{X}_\delta$ for any $\mathbf{x}\in \mathcal{X}_\delta$ since $\mathcal{X}_\delta$ is closed. Consequently, condition \ref{item:con_b} holds.
			
			\textbf{Part I.3: Verification of \ref{item:con_c}.}
			
			We have that 
			\begin{equation*}
				\begin{aligned}
					\gph(H_{\mathbf{p},\delta})&=\{(\mathbf{x},\mathbf{y})\in \mathcal{X}_\delta\times\mathcal{X}_\gamma:LF_{\mathcal{P}(\mathbf{x})}(\mathbf{x})-F(\mathbf{p},\mathbf{x})\in LF_{\mathcal{P}(\mathbf{x})}(\mathbf{y})+\mathcal{T}(\mathbf{y})\}\\
					&=\{(\mathbf{x},\mathbf{y})\in \mathcal{X}_\delta\times\mathcal{X}_\gamma: F_2(\mathbf{p}_0,\mathcal{P}(\mathbf{x}))(\mathbf{x}-\mathbf{y})-F(\mathbf{p},\mathbf{x})\in \mathcal{T}(\mathbf{y})\}.
				\end{aligned}
			\end{equation*}
			By continuity of $F,F_2,\mathcal{P}$, outer semi-continuity of $\mathcal{T}$, closedness of $\mathcal{X}_\delta$ and $\mathcal{X}_\gamma$, and Lemma~\ref{lemma: closeness of a set defined by a set-valued map}, we get that $\gph(H_{\mathbf{p},\delta})$ is closed. Thus by \cite[Theore~5.7]{rockafellar2009variational} and \cite[Theorem~3B.2]{Rockafellar2014implicitfunctionsandsolutionmapping}, condition \ref{item:con_c} holds.
			
			\textbf{Part II. Confirmation of Results \ref{item:res_1}, \ref{item:res_2} and \ref{item:res_3}.} 
			In the following, we verify Results \ref{item:res_1}, \ref{item:res_2} and \ref{item:res_3} item by item.
			
			\textbf{Part II.1.  Result \ref{item:res_1}.}
			
			Clearly, from \eqref{eq:con_of_fixed_point}, $\mathcal{A}_\delta(\mathbf{p})\neq \emptyset$ for any $\mathbf{p}\in\mathcal{U}_\delta$. 
			Note that $\gph(\mathcal{A}_\delta)=\{(\mathbf{p},\mathbf{x})\in\mathcal{U}_\delta\times \mathcal{X}_\delta: -F(\mathbf{p},\mathbf{x})\in\mathcal{T}(\mathbf{x})\}$, then by continuity of $F$, the outer semi-continuity of $\mathcal{T}$ and Lemma~\ref{lemma: closeness of a set defined by a set-valued map}, $\gph(\mathcal{A}_\delta)$ is closed in $\mathcal{U}_\delta\times\mathcal{X}_\delta$. Thus $\mathcal{A}_\delta$ is outer semi-continuous on $\mathcal{U}_\delta$ (relative to $\mathcal{U}_\delta$) by  \cite[Theorem~3B.2]{Rockafellar2014implicitfunctionsandsolutionmapping}. So we can choose $\mathcal{W}=\mathcal{U}_\delta$  and Result \ref{item:res_1} in the assertion of Theorem~\ref{thm:holder-ge} holds.
			
			\textbf{Part II.2.  Result  \ref{item:res_2}.}
			
			In the following, we show that $\mathcal{A}_\delta(\mathbf{p}_0)=\mathcal{X}_0$. On the one hand, for any $\mathbf{x}_0\in\mathcal{X}_0$, by assumption \ref{item:ass_1}, we get that $\mathbf{0}\in LF_{\mathbf{x}_0}(\mathbf{x}_0)+\mathcal{T}(\mathbf{x}_0)=F(\mathbf{p}_0,\mathbf{x}_0)+\mathcal{T}(\mathbf{x}_0)$, so $\mathbf{x}_0\in \mathcal{A}_{\delta}(\mathbf{p}_0)$ and thus $\mathcal{X}_0\subseteq\mathcal{A}_\delta(\mathbf{p}_0)$. On the other hand, if $\mathbf{x}\in \mathcal{A}_{\delta}(\mathbf{p}_0)$, then $\mathbf{x}\in\mathcal{X}_\delta$ and $\mathbf{0}\in F(\mathbf{p}_0,\mathbf{x})+\mathcal{T}(\mathbf{x})$. Therefore, $LF_{\mathcal{P}(\mathbf{x})}(\mathbf x)-F(\mathbf{p}_0,\mathbf{x})\in LF_{\mathcal{P}(\mathbf{x})}(\mathbf{x})+\mathcal{T}(\mathbf{x})$, i.e., $\mathbf{x}\in H_{\mathbf{p}_0,\delta}(\mathbf{x})$.  Because of $\mathbf{x}\in\mathcal{X}_\delta$, by assumption~\ref{item:ass_2}, Proposition~\ref{prop:equivalent} and the proof of Lemma~\ref{lem:estimation}, we get that with $\mathbf{x}_\tau:=\tau\mathbf{x}+(1-\tau)\mathcal{P}(\mathbf{x})\in\mathcal{X}_\delta$ for $\tau\in[0,1]$,
			\begin{equation}\label{eq: estimation 5}
				\begin{aligned}		\norm{\mathbf{x}-\mathcal{P}(\mathbf{x})}&=\inf_{\mathbf{z}\in\mathcal{X}_0}\norm{\mathbf{x}-\mathbf{z}}\leq \rho_{DH}(H_{\mathbf{p}_0,\delta}(\mathbf{x}),\mathcal{X}_0)\leq \lambda\norm{LF_{\mathcal{P}(\mathbf{x})}(\mathbf{x})-F(\mathbf{p}_0,\mathbf{x})}^\alpha\\
					&\leq \lambda (\int_{0}^1\norm{F_2(\mathbf{p}_0,\mathbf{x}_\tau)-F_2(\mathbf{p}_0,\mathcal{\mathbf{x}})}_2\norm{\mathbf{x}-\mathcal{P}(\mathbf{x})}d\tau)^\alpha.
				\end{aligned}
			\end{equation}
			\begin{enumerate}[label=(\arabic*)] 
				\item 	 If $\alpha\geq 1$,  we get from \eqref{eq: estimation 5} that 
				\begin{equation*}
					\norm{\mathbf{x}-\mathcal{P}(\mathbf{x})}\leq \lambda(c(\delta)	\norm{\mathbf{x}-\mathcal{P}(\mathbf{x})})^\alpha\leq (\frac{1}{2})^\alpha	\norm{\mathbf{x}-\mathcal{P}(\mathbf{x})}^\alpha \leq \frac{1}{2}	\norm{\mathbf{x}-\mathcal{P}(\mathbf{x})},
				\end{equation*} 
				where the first inequality follows from the definition of $c(\delta)$, the second from \eqref{eq: estimation of c}, and the last from $	\norm{\mathbf{x}-\mathcal{P}(\mathbf{x})}\leq \delta<\frac{1}{2}$. Therefore, we get that $	\norm{\mathbf{x}-\mathcal{P}(\mathbf{x})}=0$, and hence $\mathbf{x}=\mathcal{P}(\mathbf{x})\in \mathcal{X}_0$.
				\item If $\alpha\in (0,1)$,  we get from \eqref{eq: estimation 5} that 
				\begin{equation*}
					\norm{\mathbf{x}-\mathcal{P}(\mathbf{x})}\leq \lambda \norm{\mathbf{x}-\mathcal{P}(\mathbf{x})}^\alpha(\int_{0}^1\tilde{\lambda}\norm{\mathbf{x}_\tau-\mathbf{x}}^{\frac{1}{\alpha}-1}d\tau)^\alpha\leq \lambda\tilde{\lambda}^\alpha \norm{\mathbf{x}-\mathcal{P}(\mathbf{x})}\leq  
					\frac{1}{2^\alpha}	\norm{\mathbf{x}-\mathcal{P}(\mathbf{x})},
				\end{equation*}
				where the first inequality follows from assumption~\ref{item:ass_4},  the second from $\norm{\mathbf{x}_\tau-\mathbf{x}}\leq \norm{\mathcal{P}(\mathbf{x})-\mathbf{x}}$, and the last also from assumption~\ref{item:ass_4}. Thus $\norm{\mathbf{x}-\mathcal{P}(\mathbf{x})}=0$, and hence $\mathbf{x}=\mathcal{P}(\mathbf{x})\in \mathcal{X}_0$.
			\end{enumerate}
			Therefore we get that $\mathbf{x}\in\mathcal{X}_0$ and hence $\mathcal{A}_{\delta}(\mathbf{p}_0)\subseteq\mathcal{X}_0$ for any $\alpha> 0$.  So $\mathcal{A}_{\delta}(\mathbf{p}_0)=\mathcal{X}_0$, and hence Result \ref{item:res_2} holds.
			
			\textbf{Part II.3. Result \ref{item:res_3}.}
			
			It remains to prove the conclusion \ref{item:res_3}.  This needs to estimate $\rho_{DH}(\mathcal{A}_\delta(\mathbf{p}),\mathcal{A}_\delta(\mathbf{p}_0))$ by Proposition~\ref{prop:equivalent}. By \ref{item:res_2}, we have that $\mathcal{A}_\delta(\mathbf{p}_0)=\mathcal{X}_0$. Moreover, for $\mathbf{x}\in\mathcal{X}_\delta\subseteq\mathcal{X}_\gamma$, we have 
			\[\begin{aligned}
				\mathbf{x}\in \mathcal{A}_{\delta}(\mathbf{p})&\Longleftrightarrow \mathbf{0}\in F(\mathbf{p},\mathbf{x})+\mathcal{T}(\mathbf{x})\\&\Longleftrightarrow LF_{\mathcal{P}(\mathbf{x})}(\mathbf{x})-F(\mathbf{x},\mathbf{p})\in LF_{\mathcal{P}(\mathbf{x})}(\mathbf{x})+\mathcal{T}(\mathbf{x})\\&\Longleftrightarrow \mathbf{x}\in H_{\mathbf{p},\delta}(\mathbf{x}).
			\end{aligned}
			\]
			Thus, the estimation~\eqref{eq: estimation 4} is a start point for the estimation $\rho_{DH}(\mathcal{A}_\delta(\mathbf{p}),\mathcal{A}_{\delta}(\mathbf{p}_0))$. 
			In the following, we split the proof into two parts, corresponding to $\alpha\in(0,1)$ and $\alpha\geq 1$ respectively. The ingredient is the reference on either $\mu_0$ or $\mu_\delta$.

			\textbf{Part II.3(a). The case $\alpha\in(0,1)$.}	
			
			In this case, for any $\mathbf{x}\in\mathcal{A}_\delta(\mathbf{p})$, we get that  
			\begin{align}\label{eq:estimation_alpha_less_1}
				\rho_{DH}(\{\mathbf{x}\},\mathcal{A}_\delta(\mathbf{p}_0)) 
				&\leq \rho_{DH}(H_{\mathbf{p},\delta}(\mathbf{x}),\mathcal{X}_0)\nonumber\\
				&\leq \lambda\norm{LF_{\mathcal{P}(\mathbf{x})}(\mathbf{x})-F(\mathbf{p},\mathbf{x})}^\alpha\nonumber\\
				&\leq \lambda (\norm{F_2(\mathbf{p}_0,\xi)(\mathbf{x}-\mathcal{P}(\mathbf{x}))-F_2(\mathbf{p}_0,\mathcal{P}(\mathbf{x}))(\mathbf{x}-\mathcal{P}(\mathbf{x}))}+\mu_{\delta}(\mathbf{p}))^\alpha\nonumber\\
				&\leq \lambda (\norm{F_2(\mathbf{p}_0,\xi)-F_2(\mathbf{p}_0,\mathcal{P}(\mathbf{x}))}_2\norm{\mathbf x-\mathcal{P}(\mathbf{x})}+\mu_{\delta}(\mathbf{p}))^\alpha\nonumber\\
				&\leq \lambda (\tilde{\lambda}\norm{\mathbf{x}-\mathcal{P}(\mathbf{x})}^{\frac{1}{\alpha}}+\mu_{\delta}(\mathbf{p}))^\alpha\nonumber\\
				&\leq \lambda \tilde{\lambda}^\alpha \norm{\mathbf{x}-\mathcal{P}(\mathbf{x})}+\lambda(\mu_{\delta}(\mathbf{p}))^\alpha\nonumber\\
				&\leq \frac{1}{2^\alpha}\norm{\mathbf{x}-\mathcal{P}(\mathbf{x})}+\lambda(\mu_{\delta}(\mathbf{p}))^\alpha,
			\end{align} 
			where $\xi$ is some point lying on the line segment between $\mathbf{x}$ and $\mathcal{P}(\mathbf{x})$,  the first inequality follows from the definition of $\rho_{DH}$ (cf.\ \eqref{eq:direct-haus-def}), the second from assumption \ref{item:ass_2} and Proposition~\ref{prop:equivalent}, the third from \eqref{eq: estimation 1},  the mean value theorem and the definition of $\mu_\delta$ (cf.\ \eqref{eq:mudelta}), the fifth from assumption \ref{item:ass_4} and the fact that $\norm{\xi-\mathcal{P}(\mathbf{x})}\leq \norm{\mathbf{x}-\mathcal{P}(\mathbf{x})}$,  the penultimate from Lemma~\ref{lem:an_estimation}, and the last from also assumption \ref{item:ass_4}. Note that by definition of $\mathcal{P}$ and conclusion \ref{item:res_2}, we get that $\rho_{DH}(\{\mathbf{x}\},\mathcal{A}_\delta(\mathbf{p}_0))=\rho_{DH}(\{\mathbf{x}\},\mathcal{X}_0)=\norm{\mathbf{x}-\mathcal{P}(\mathbf{x})}$. Thus by \eqref{eq:estimation_alpha_less_1}, we get that~\eqref{eq:Holder continuity,1} holds.

			\textbf{Part II.3(b). The case $\alpha\geq 1$.}	
			
			In this case, for any $\mathbf{x}\in\mathcal{A}_\delta(\mathbf{p})$, we have that
			\begin{equation}\label{eq: estimation 6}
				\begin{aligned}
					\rho_{DH}(\{\mathbf{x}\},\mathcal{A}_\delta(\mathbf{p}_0))&\leq \rho_{DH}(H_{\mathbf{p},\delta}(\mathbf{x}),\mathcal{X}_0)\leq \lambda\norm{LF_{\mathcal{P}(\mathbf{x})}(\mathbf{x})-F(\mathbf{p},\mathbf{x})}^\alpha\\&\leq \lambda (\norm{g(\mathbf{x})-g(\mathcal{P}(\mathbf{x}))}+\norm{g(\mathcal{P}(\mathbf{x}))}+\norm{F(\mathbf{p}_0,\mathbf{x})-LF_{\mathcal{P}(\mathbf{x})}(\mathbf{x})})^\alpha,
				\end{aligned}
			\end{equation}
			where $g(\mathbf{x}):=F(\mathbf{p},\mathbf{x})-F(\mathbf{p}_0,\mathbf{x})$. As $\mathcal{P}(\mathbf{x})\in \mathcal{X}_0$, then by definition, 
			\begin{equation}\label{eq: estimation 7}
				\norm{g(\mathcal{P}(\mathbf{x}))}\leq \mu_0(\mathbf{p}).
			\end{equation}
			Denote $\mathbf{x}_\tau:=\tau \mathbf{x}+(1-\tau)\mathcal{P}(\mathbf{x})$ and $G(\tau):=g(\mathbf{x}_\tau)$, then for $\delta\in[0,\delta_0]$ and $\mathbf{x}\in \mathcal{X}_\delta$, we have 
			\begin{equation}\label{eq: estimation 8}
				\begin{aligned}
					\norm{g(\mathbf{x})-g(\mathcal{P}(\mathbf{x}))}&=\norm{G(1)-G(0)}=\norm{\int_0^1G'(\tau)d\tau}\\
					&=\norm{\int_0^1F_2(\mathbf{p},\mathbf{x}_\tau)(\mathbf{x}-\mathcal{P}(\mathbf{x}))-F_2(\mathbf{p}_0,\mathbf{x}_\tau)(\mathbf{x}-\mathcal{P}(\mathbf{x}))d\tau}\\
					&\leq \int_0^1 \norm{F_2(\mathbf{p},\mathbf{x}_\tau)-F_2(\mathbf{p}_0,\mathbf{x}_\tau)}_2\norm{\mathbf{x}-\mathcal{P}(\mathbf{x})}d\tau\leq \phi_{\delta}(\mathbf{p})\norm{\mathbf{x}-\mathcal{P}(\mathbf{x})},
				\end{aligned}
			\end{equation}
			where
			\begin{equation*}
				\phi_{\delta}(\mathbf{p}):=\max_{\mathbf{x}\in\mathcal{X}_\delta}\norm{F_2(\mathbf{p},\mathbf{x})-F_2(\mathbf{p}_0,\mathbf{x})}_2.
			\end{equation*}
			Similar to the discussion on $\mu_{\delta}(\mathbf{p})$ (cf.\ \eqref{eq:mudelta}), $\phi_{\delta}(\mathbf{p})$ is well-defined and continuous at $\mathbf{p}_0$ with $\phi_{\delta}(\mathbf{p}_0)=0$. Combining ~\eqref{eq:cdelta} with Lemma~\ref{lem:estimation},~\eqref{eq: estimation 6},~\eqref{eq: estimation 7} and~\eqref{eq: estimation 8} with the fact that $\rho_{DH}(\{\mathbf{x}\},\mathcal{A}_\delta(\mathbf{p}_0))=\norm{\mathbf{x}-\mathcal{P}(\mathbf{x})}$, we get that 
			\begin{equation}\label{eq: estimation 9}
				\norm{\mathbf{x}-\mathcal{P}(\mathbf{x})}\leq \lambda(\phi_{\delta}(\mathbf{p})\norm{\mathbf{x}-\mathcal{P}(\mathbf{x})}+\mu_0(\mathbf{p})+c(\delta)\norm{\mathbf{x}-\mathcal{P}(\mathbf{x})})^\alpha.
			\end{equation}
			
			For any $\epsilon>0$,  by Lemma~\ref{lem:inequality-para}, we can choose $a>0$, such that $C_{\lambda,\alpha}(a)\leq\lambda+\epsilon$ with $C_{\lambda,\alpha}$ defined in Lemma~\ref{lem:inequality-para}.
			
			Since $c(\delta)$ is continuous at $0$ and $c(0)=0$ (cf.\ the disucssion following \eqref{eq:cdelta}), then there exists $\delta_\epsilon\leq \gamma$ such that $c(\delta_\epsilon')<\frac{1}{2}a$ for all $\delta_\epsilon'\leq\delta_\epsilon$.   Since $\phi_{\delta_\epsilon}(\mathbf{p})$ is continuous at $\mathbf p_0$ with $\phi_{\delta_\epsilon}(\mathbf{p}_0)=0$, then there exists a neighborhood $\mathcal{W}_{\epsilon}$ such that for any $\mathbf{p}\in\mathcal{W}_{\epsilon}$ it holds $\phi_{\delta_\epsilon}(\mathbf{p})<\frac{1}{2}a$.
			So by~\eqref{eq: estimation 9}, we get that for any $\mathbf{p}\in \mathcal{W}_{\epsilon}$ and $\mathbf{x}\in\mathcal{A}_{\delta_\epsilon}(\mathbf{p})$, we have  
			\begin{equation}\label{eq: estimation 10}
				\norm{\mathbf{x}-\mathcal{P}(\mathbf{x})}\leq \lambda (a\norm{\mathbf{x}-\mathcal{P}(\mathbf{x})}+\mu_0(\mathbf{p}))^\alpha.
			\end{equation}
			Thus by Lemma~\ref{lem:inequality}, we get that 
			\begin{equation}\label{eq:norm_estimate}
				\norm{\mathbf{x}-\mathcal{P}(\mathbf{x})}\leq C_{\lambda,\alpha}(a)(\mu_0(\mathbf{p}))^\alpha\leq(\lambda+\epsilon)(\mu_0(\mathbf{p}))^\alpha.
			\end{equation}
			This implies $\mathcal{A}_{\delta_\epsilon}(\mathbf{p})\subseteq\mathcal{A}_{\delta_\epsilon}(\mathbf{p}_0)+(\lambda+\epsilon)(\mu_0(\mathbf{p}))^\alpha\mathcal{B}$, which is exactly \eqref{eq:Holder continuity,2}. 
			
			If we fix $\epsilon=\lambda$, then $\delta_\lambda$ corresponding to $\epsilon=\lambda$ is determined by the preceding analaysis. Shrinking $\bar\delta$ if necessary, we can assume without loss of generality that $\bar\delta\leq\delta_\lambda$. Further shrinking $\mathcal W$ if necessary, then $\mathcal W\subseteq\mathcal W_\lambda$ which corresponds to $\epsilon=\lambda$, and hence \eqref{eq:Holder continuity,2-0} follows.

			\textbf{Part III. Composite exponent.}	
			
			Finally, for any $\delta\leq \bar\delta$, when $\mu_{\delta}$ is locally upper H\"older continuous at $\mathbf{p}_0$ with exponent $\beta$ and modulus $\nu$,  we can shrink $\mathcal{W}$ if necessary so that in the underlying neighborhood $\mu_{\delta}(\mathbf{p})\leq \nu\norm{\mathbf{p}-\mathbf{p}_0}^\beta$. By definition of $\mu_{\theta}$ for $\theta\in [0,\gamma]$ and recall that $\delta\leq \bar\delta=\frac{\delta_0}{2}<\gamma$, we get that 
			\begin{equation*} 
				\mu_0(\mathbf{p})\leq\mu_{\delta}(\mathbf{p})\leq \nu\norm{\mathbf{p}-\mathbf{p}_0}^\beta. 
			\end{equation*}
			Thus by \eqref{eq:Holder continuity,1} and \eqref{eq:Holder continuity,2-0}, the last assertion of Theorem~\ref{thm:holder-ge} holds for $M=(1-\frac{1}{2^\alpha})^{-1}\lambda\nu^\alpha$ when $0<a<1$ and $M=2\lambda\nu^\alpha$ for $\alpha\geq 1$.
		\end{proof}
		
		\section{Upper H\"olderian with Explicit Exponent of Least Squares}\label{sec:local_upper_Holder}

		\subsection{Ball-LS under Linear Perturbation}\label{sec:linear-pert-ls}
		The following proposition, which characterizes the upper H\"olderian and its explicit exponent of the solution mapping of Ball-LS under linear perturbation, plays a crucial role in the proof of Lemma~\ref{lem:holder-nls-exponent} for the case of parameteric Ball-LS. Proposition~\ref{prop:holder-calm} is of independent interest, and has the potential connections to general quadratically constrained quadratic programming (QCQP).
		\begin{proposition}\label{prop:holder-calm}
			Let $\mathbf{b}\in\mathbb{R}^m$ and $\mathbf{A}\in\mathbb{R}^{m\times n}$, 
			consider Ball-LS under linear perturbation $\mathbf{z}\in\mathbb{R}^n$:
			\begin{equation}\label{eq: a special parametric optimization}
				\min_{\mathbf{y}\in\mathcal{B}}\frac{1}{2}\norm{\mathbf{Ay}-\mathbf{b}}^2-\inner{\mathbf{y},\mathbf{z}}.
			\end{equation}
			Then the solution map $\mathcal{R}_{\mathbf{A},\mathbf{b}}:\mathbb{R}^n\rightrightarrows \mathbb{R}^n$ defined by 
			\begin{equation*}
				\mathcal{R}_{\mathbf{A},\mathbf{b}}(\mathbf{z})=\argmin_{\mathbf{y}\in\mathcal{B}}\frac{1}{2}\norm{\mathbf{Ay}-\mathbf{b}}^2-\inner{\mathbf{y},\mathbf{z}}
			\end{equation*}
			is a nonempty set-valued mapping and $\mathcal{R}_{\mathbf{A},\mathbf{b}}\in\mathcal{H}_{\mathbf{0}}(\alpha,\mu)$ for some $\mu>0$ and $\alpha$ chosen as follows:
			\begin{equation}\label{eq:Holder_exponent_1}
				\alpha=\begin{cases}
					\frac{1}{2}& \tilde{\Delta}_{\mathbf{A},\mathbf{b}}(\mathbf{0})<0,\\
					1&\tilde{\Delta}_{\mathbf{A},\mathbf{b}}(\mathbf{0})>0,\\
					\frac{1}{3}&  \tilde{\Delta}_{\mathbf{A},\mathbf{b}}(\mathbf{0})=0,
				\end{cases}
			\end{equation}
			where  \begin{equation}\label{eq: crieterion for special equation 1}
				\tilde{\Delta}_{\mathbf{A},\mathbf{b}}(\mathbf{z}):=(\mathbf{z}^{\tp}+\mathbf{b}^{\tp}\mathbf{A})((\mathbf{AA}^{\tp})^{\dag})^2(\mathbf{A}^{\tp}\mathbf{b}+\mathbf{z})-1.
			\end{equation} 
		\end{proposition}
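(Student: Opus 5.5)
We recast \eqref{eq: a special parametric optimization} as a trust-region subproblem and read off the exponent from its KKT system, splitting into the three sign cases of $\tilde{\Delta}_{\mathbf{A},\mathbf{b}}(\mathbf{0})$. Put $\mathbf{Q}:=\mathbf{A}^{\tp}\mathbf{A}\succeq 0$ and $\mathbf{c}(\mathbf{z}):=\mathbf{A}^{\tp}\mathbf{b}+\mathbf{z}$, with $\mathbf{c}_0:=\mathbf{c}(\mathbf{0})\in\im(\mathbf{Q})$. The objective $\frac12\mathbf{y}^{\tp}\mathbf{Q}\mathbf{y}-\mathbf{c}(\mathbf{z})^{\tp}\mathbf{y}$ is convex and $\mathcal{B}$ is compact, so $\mathcal{R}_{\mathbf{A},\mathbf{b}}(\mathbf{z})\ne\emptyset$. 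Moreover $\mathbf{y}\in\mathcal{R}_{\mathbf{A},\mathbf{b}}(\mathbf{z})$ if and only if there is $\sigma\ge0$ with
\[
(\mathbf{Q}+\sigma\mathbf{I}_n)\mathbf{y}=\mathbf{c}(\mathbf{z}),\qquad \norm{\mathbf{y}}\le 1,\qquad \sigma(1-\norm{\mathbf{y}}^2)=0 .
\]
I read $\tilde{\Delta}_{\mathbf{A},\mathbf{b}}(\mathbf{0})$ as $\norm{\mathbf{Q}^{\dag}\mathbf{c}_0}^2-1=\norm{\mathbf{A}^{\dag}\mathbf{b}}^2-1$, i.e.\ the squared norm of the minimum-norm unconstrained LS solution $\mathbf{y}^*:=\mathbf{Q}^{\dag}\mathbf{c}_0$ minus one. (I would first verify this pseudo-inverse identity from the SVD of $\mathbf{A}$.) Throughout, write $\mathbf{y}=\mathbf{u}+\mathbf{w}$ and $\mathbf{z}=\mathbf{z}_R+\mathbf{z}_K$ with $\mathbf{u},\mathbf{z}_R\in\im(\mathbf{Q})$ and $\mathbf{w},\mathbf{z}_K\in\ker(\mathbf{Q})$. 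The KKT system then splits into
\[
(\mathbf{Q}_R+\sigma)\mathbf{u}=\mathbf{c}_0+\mathbf{z}_R,\qquad \sigma\mathbf{w}=\mathbf{z}_K,
\]
where $\mathbf{Q}_R$ is the restriction of $\mathbf{Q}$ to $\im(\mathbf{Q})$ and is positive definite there.

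\emph{Case $\tilde{\Delta}>0$.} At $\mathbf{z}=\mathbf{0}$ the multiplier $\sigma_0>0$ is the unique root of the secular equation $\norm{(\mathbf{Q}+\sigma\mathbf{I}_n)^{-1}\mathbf{c}(\mathbf{z})}^2=1$. Since $\mathbf{Q}+\sigma_0\mathbf{I}_n\succ0$, the solution is unique, and the $\sigma$-derivative of the secular function is $-2\mathbf{c}^{\tp}(\mathbf{Q}+\sigma\mathbf{I}_n)^{-3}\mathbf{c}\neq0$. The classical implicit function theorem therefore gives a smooth $\sigma(\mathbf{z})$ near $\mathbf{0}$. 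A continuity/compactness argument shows that for small $\mathbf{z}$ every KKT pair has $\sigma$ near $\sigma_0$; indeed $\sigma=0$ would force $\norm{\mathbf{y}}\ge\norm{\mathbf{u}}\approx\norm{\mathbf{y}^*}>1$. So $\mathcal{R}_{\mathbf{A},\mathbf{b}}$ is single-valued and Lipschitz near $\mathbf{0}$, which gives $\alpha=1$.

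\emph{Case $\tilde{\Delta}<0$.} Here $\mathcal{R}_{\mathbf{A},\mathbf{b}}(\mathbf{0})=(\mathbf{y}^*+\ker\mathbf{Q})\cap\mathcal{B}$ with $\norm{\mathbf{y}^*}<1$. For small $\mathbf{z}$, the range component satisfies $\norm{\mathbf{u}-\mathbf{y}^*}\le C(\sigma+\norm{\mathbf{z}})$. I would use the growth estimate $f_0(\mathbf{y})-f_0^*=\frac12\norm{\mathbf{A}(\mathbf{y}-\mathbf{y}^*)}^2$ together with $f_0(\mathbf{y}_{\mathbf{z}})-f_0^*\le 2\norm{\mathbf{z}}$, obtained by comparing optimality of $\mathbf{y}_{\mathbf{z}}$ for $f_{\mathbf{z}}$ against a point of $\mathcal{R}_{\mathbf{A},\mathbf{b}}(\mathbf{0})$. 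This yields $\norm{\mathbf{u}-\mathbf{y}^*}\le C\norm{\mathbf{z}}^{1/2}$. Because $\norm{\mathbf{y}^*}<1$, the slab $(\mathbf{y}^*+\ker\mathbf{Q})\cap\mathcal{B}$ has a Lipschitz error bound: if $\norm{\mathbf{u}-\mathbf{y}^*}$ is small and $\norm{\mathbf{u}+\mathbf{w}}\le1$, then $\mathbf{y}^*+\mathbf{w}$ can be rescaled into the ball at cost $O(\norm{\mathbf{u}-\mathbf{y}^*})$. Hence the distance to $\mathcal{R}_{\mathbf{A},\mathbf{b}}(\mathbf{0})$ is at most $C\norm{\mathbf{z}}^{1/2}$, which gives $\alpha=\frac12$.

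\emph{Case $\tilde{\Delta}=0$, the main obstacle.} Here $\mathcal{R}_{\mathbf{A},\mathbf{b}}(\mathbf{0})=\{\mathbf{y}^*\}$ with $\norm{\mathbf{y}^*}=1$: the affine set $\mathbf{y}^*+\ker\mathbf{Q}$ is tangent to the sphere. The generic growth argument only yields quartic growth and the exponent $\frac14$ of Jiang--Li, so instead I exploit the two KKT equations quantitatively. Bound the multiplier and the range component first:
\begin{itemize}
\item If $\sigma=0$, then $\mathbf{z}_K=\mathbf{0}$ and $\mathbf{u}=\mathbf{y}^*+\mathbf{Q}^{\dag}\mathbf{z}_R$. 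Then $\norm{\mathbf{w}}^2\le1-\norm{\mathbf{u}}^2=O(\norm{\mathbf{z}})$, so $\norm{\mathbf{y}-\mathbf{y}^*}=O(\norm{\mathbf{z}}^{1/2})$.
\item If $\sigma>0$, then $\norm{\mathbf{y}}=1$. Expanding gives $\norm{\mathbf{u}}^2=1-2\kappa\sigma+O(\sigma^2+\norm{\mathbf{z}})$ with $\kappa:=\mathbf{c}_0^{\tp}\mathbf{Q}^{\dag3}\mathbf{c}_0>0$.
\end{itemize}
In the second case, $\norm{\mathbf{w}}^2=1-\norm{\mathbf{u}}^2\ge0$ forces $\sigma\le C\norm{\mathbf{z}}$ once $\sigma$ is small; smallness of $\sigma$ follows from the outer semicontinuity/compactness argument. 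This in turn gives $\norm{\mathbf{u}-\mathbf{y}^*}\le C\norm{\mathbf{z}}$ and $\norm{\mathbf{w}}^2\le C(\sigma+\norm{\mathbf{z}})$. The key trade-off combines this with $\norm{\mathbf{w}}=\norm{\mathbf{z}_K}/\sigma\le\norm{\mathbf{z}}/\sigma$:
\[
\norm{\mathbf{w}}\le\min\Bigl\{\tfrac{\norm{\mathbf{z}}}{\sigma},\,C(\sigma^{1/2}+\norm{\mathbf{z}}^{1/2})\Bigr\}\le C'\norm{\mathbf{z}}^{1/3}.
\]
The two bounds balance at $\sigma\sim\norm{\mathbf{z}}^{2/3}$. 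Thus $\norm{\mathbf{y}-\mathbf{y}^*}\le C''\norm{\mathbf{z}}^{1/3}$ in every subcase, which gives $\alpha=\frac13$.

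The delicate points are the expansion of $\norm{(\mathbf{Q}_R+\sigma)^{-1}(\mathbf{c}_0+\mathbf{z}_R)}^2$ with uniform constants, and the a priori smallness of $\sigma$ that justifies discarding the $O(\sigma^2)$ terms.
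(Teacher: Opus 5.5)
\textbf{Gap in the critical case.} Your three-case strategy is sound, but the case $\tilde{\Delta}_{\mathbf{A},\mathbf{b}}(\mathbf{0})=0$ contains a false step exactly where the exponent is decided. You assert that for $\sigma>0$, ``$\norm{\mathbf{w}}^2=1-\norm{\mathbf{u}}^2\ge0$ forces $\sigma\le C\norm{\mathbf{z}}$'', and you use this to get $\norm{\mathbf{u}-\mathbf{y}^*}\le C\norm{\mathbf{z}}$.

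By your own expansion, $1-\norm{\mathbf{u}}^2=2\kappa\sigma+O(\sigma^2+\norm{\mathbf{z}})$. Its nonnegativity is therefore automatic for small $\sigma\ge0$ and gives no upper bound on $\sigma$. The claim is in fact false. In Example~\ref{exm:fractional} with $\mathbf{z}=(0,z_2)^{\tp}$, one has $u_1=1/(1+\sigma)$ and $w_2=z_2/\sigma$. The sphere equation $z_2^2/\sigma^2=1-(1+\sigma)^{-2}$ then gives $\sigma\approx 2^{-1/3}|z_2|^{2/3}$, so $\norm{\mathbf{u}-\mathbf{y}^*}\approx\sigma$ is not $O(\norm{\mathbf{z}})$.

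Your text is also internally inconsistent. If $\sigma\le C\norm{\mathbf{z}}$ held, your bound $\norm{\mathbf{w}}^2\le C(\sigma+\norm{\mathbf{z}})$ would already give exponent $\frac12$. That contradicts the tightness of $\frac13$ and your own balancing at $\sigma\sim\norm{\mathbf{z}}^{2/3}$. As written, the subcase $\sigma>0$ has no valid bound on the range component $\mathbf{u}-\mathbf{y}^*$. What is missing is a correct rate for the multiplier, which is precisely the paper's key estimate $\lambda_{\mathbf{A},\mathbf{b}}(\mathbf{z})=O(\norm{\mathbf{z}}^{2/3})$.

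\textbf{Repair.} The repair uses only your own ingredients. Your trade-off bound $\norm{\mathbf{w}}\le\min\{\norm{\mathbf{z}}/\sigma,\,C(\sigma^{1/2}+\norm{\mathbf{z}}^{1/2})\}\le C'\norm{\mathbf{z}}^{1/3}$ does not use the false claim. Feed it back into the sphere equation: $2\kappa\sigma\le\norm{\mathbf{w}}^2+C\sigma^2+C\norm{\mathbf{z}}$, or equivalently $\norm{\mathbf{z}_K}^2/\sigma^2\ge2\kappa\sigma-C\sigma^2-C\norm{\mathbf{z}}$. Once $\sigma$ is small, this gives $\sigma\le C\norm{\mathbf{z}}^{2/3}$. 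Hence $\norm{\mathbf{u}-\mathbf{y}^*}\le C(\sigma+\norm{\mathbf{z}})=O(\norm{\mathbf{z}}^{2/3})$ and $\norm{\mathbf{y}-\mathbf{y}^*}=O(\norm{\mathbf{z}}^{1/3})$.

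\textbf{Comparison with the paper.} With that correction your route is valid and differs from the paper's in the critical case. The paper gets the multiplier rate from the multivariate Newton--Puiseux theorem plus a lowest-order balancing in the Taylor-expanded secular equation. It then bounds $\norm{\mathbf{y}-\mathbf{y}^*}^2=2-2\inner{\mathbf{y},\mathbf{y}^*}$ by $O(\lambda_{\mathbf{A},\mathbf{b}}(\mathbf{z})+\norm{\mathbf{z}})$. Your two-sided inequality argument is elementary and fully quantitative.

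Your direct use of $\sigma\mathbf{w}=\mathbf{z}_K$ is also more robust than the paper's closed-form description of $\mathcal{R}_{\mathbf{A},\mathbf{b}}(\mathbf{z})$ for $\tilde{\Delta}_{\mathbf{A},\mathbf{b}}(\mathbf{z})\le0$. That description is imported from the unperturbed Ball-LS formula and is valid only when $\mathbf{w}_2=\mathbf{0}$; otherwise the multiplier is forced to be positive.

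In the case $\tilde{\Delta}_{\mathbf{A},\mathbf{b}}(\mathbf{0})<0$, your growth-plus-error-bound argument is correct and differs from the paper's explicit rescaling. The case $\tilde{\Delta}_{\mathbf{A},\mathbf{b}}(\mathbf{0})>0$ matches the paper. Your reading of $\tilde{\Delta}_{\mathbf{A},\mathbf{b}}$ is the intended one: the displayed formula only typechecks with $(\mathbf{A}^{\tp}\mathbf{A})^{\dag}$ in place of $(\mathbf{A}\mathbf{A}^{\tp})^{\dag}$, so your planned SVD check of the literal formula would fail.
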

		
		\begin{proof}{Proof}
			For $\mathbf{A}=\mathbf{0}$, $\mathcal{R}_{\mathbf{0},\mathbf{b}}(\mathbf{z})=\begin{cases}
				\{\frac{\mathbf{z}}{\norm{\mathbf{z}}}\}	&\mathbf{z}\neq \mathbf{0};\\
				\mathcal{B}	&\mathbf{z}=\mathbf{0}.
			\end{cases}$  and thus $\mathcal{R}_{\mathbf{0},\mathbf{b}}\in\mathcal{H}_{\mathbf{0}}(\alpha,\mu)$ for any $\alpha, \mu>0$.  
			In the following, we assume that $\mathbf{A}\neq \mathbf{0}$.
			
			Since $\mathcal{B}$ is compact, then $\mathcal{R}_{\mathbf{A},\mathbf{b}}$ is surely a nonempty set-valued mapping. 	Suppose that $\rank(\mathbf{A})=r\in\{1,\dots, n\}$ and  $\mathbf{A}=\mathbf{U}\begin{bmatrix}
				\mathbf{\Sigma}_0&0\\0&0
			\end{bmatrix}\mathbf{V}^{{\tp}}$ is the singular value decomposition of $\mathbf{A}$ with  $$\mathbf{\Sigma}_0:=\diag(\sigma_1,\dots,\sigma_r)\in\mathbb{R}^{r\times r}, \sigma_1\geq \sigma_2\geq \cdots\geq \sigma_r>0.$$ Denote $\mathbf{c}:=\begin{bmatrix}
				\mathbf{\Sigma}_0&0\\0&0
			\end{bmatrix}\mathbf{U}^{\tp} \mathbf{b}=(c_1,\dots,c_r,0,\dots,0)^{\tp}$, $\mathbf{c}_1:=(c_1,\dots,c_r)^{\tp}$. Denote $\mathbf{w}=(w_1,\dots,w_n)^{\tp}=\mathbf{V}^{\tp}\mathbf{z}=(\mathbf{w}_1^{\tp},\mathbf{w}_2^{\tp})^{\tp}$ with $\mathbf{w}_1\in \mathbb{R}^r$. Then by Proposition~\ref{prop:optimal-value-solution-mapping},  we have that 
			\begin{equation}\label{eq: solution mapping for the specail problem 2}
				\mathcal{R}_{\mathbf{A},\mathbf{b}}(\mathbf{z})=\begin{cases}
					\bigg\{\mathbf{V}\begin{bmatrix}
						\mathbf{\Sigma}_0^{-2}(\mathbf{w}_1+\mathbf{c}_1)\\\mathbf{x}
					\end{bmatrix}:\norm{\mathbf{x}}^2+\norm{	\mathbf{\Sigma}_0^{-2}(\mathbf{w}_1+\mathbf{c}_1)}^2\leq 1\bigg\},&\tilde{\Delta}_{\mathbf{A},\mathbf{b}}(\mathbf{z})\leq 0,\\
					\bigg\{\mathbf{V}\begin{bmatrix}
						(\mathbf{\Sigma}_0^2+2\lambda_{\mathbf{A},\mathbf{b}}(\mathbf{z})\mathbf{I}_{r})^{-1}(\mathbf{w}_1+\mathbf{c}_1)\\\frac{1}{2\lambda_{\mathbf{A},\mathbf{b}}(\mathbf{z})}\mathbf{w}_2
					\end{bmatrix}\bigg\},&\tilde{\Delta}_{\mathbf{A},\mathbf{b}}(\mathbf{z})> 0.
				\end{cases}
			\end{equation}
			where $\tilde{\Delta}_{\mathbf{A},\mathbf{b}}(\mathbf{z})$ is defined as~\eqref{eq: crieterion for special equation 1}, which also has the form (cf.\ \eqref{eq:criteria-delta})
			\begin{equation}\label{eq: crieterion for special equation 2}
				\tilde{\Delta}_{\mathbf{A},\mathbf{b}}(\mathbf{z})=\norm{	\mathbf{\Sigma}_0^{-2}(\mathbf{w}_1+\mathbf{c}_1)}^2-1=\sum_{i=1}^r\frac{(w_i+c_i)^2}{\sigma_i^4}-1,
			\end{equation}
			and the optimal multiplier $\lambda_{\mathbf{A},\mathbf{b}}(\mathbf{z})>0$ is the unique element satisfying  	\begin{equation}\label{eq: first order equation for special problem 2}
				\begin{aligned}
					1&=\frac{1}{4(\lambda_{\mathbf{A},\mathbf{b}}(\mathbf{z}))^2}\norm{\mathbf{w}_2}^2+\norm{(\mathbf{\Sigma}_0^2+2\lambda_{\mathbf{A},\mathbf{b}}(\mathbf{z})\mathbf{I}_{r})^{-1}(\mathbf{w}_1+\mathbf{c}_1)}^2\\&=\frac{\norm{\mathbf{w}_2}^2}{4(\lambda_{\mathbf{A},\mathbf{b}}(\mathbf{z}))^2}+\sum_{i=1}^r\frac{(w_i+c_i)^2}{(\sigma_i^2+2\lambda_{\mathbf{A},\mathbf{b}}(\mathbf{z}))^2}.
				\end{aligned}
			\end{equation}
			
			The rest proof of $\mathcal{R}_{\mathbf{A},\mathbf{b}}\in \mathcal{H}_{\mathbf{0}}(\alpha,\mu)$ for some $\mu>0$ and $\alpha$ defined as~\eqref{eq:Holder_exponent_1} is divided into three cases aligning with the categories in~\eqref{eq:Holder_exponent_1}. 
			
			\textbf{Part I. The case  $\tilde{\Delta}_{\mathbf{A},\mathbf{b}}(\mathbf{0})=\norm{\mathbf{\Sigma}_0^{-2}\mathbf{c}_1}-1<0$.} Since $\tilde{\Delta}_{\mathbf{A},\mathbf{b}}(\mathbf{z})$ is continuous with respect to $\mathbf{z}$, then there exists a bounded neighborhood $\mathcal{U}_1$ of $\mathbf{0}$, such that for any $\mathbf{z}\in\mathcal{U}_1$,  
			$\tilde{\Delta}_{\mathbf{A},\mathbf{b}}(\mathbf{z})=\norm{	\mathbf{\Sigma}_0^{-2}(\mathbf{w}_1+\mathbf{c}_1)}^2-1< 0$. Thus, by~\eqref{eq: solution mapping for the specail problem 2}, for any $\mathbf{y}_0\in \mathcal{R}_{\mathbf{A},\mathbf{b}}(\mathbf{z})$, there exists $\mathbf{x}_0\in\mathbb{R}^{n-r}$ with $\norm{\mathbf{x}_0}^2+\norm{	\mathbf{\Sigma}_0^{-2}(\mathbf{w}_1+\mathbf{c}_1)}^2\leq 1$, such that $\mathbf{y}_0=\mathbf{V}\begin{bmatrix}
				\mathbf{\Sigma}_0^{-2}(\mathbf{w}_1+\mathbf{c}_1)\\\mathbf{x}_0
			\end{bmatrix}.$
			
			Let $\theta:=\min\Big\{(\frac{1-\norm{\mathbf{\Sigma}_0^{-2}\mathbf{c}_1}^2}{1-\norm{\mathbf{\Sigma}_0^{-2}(\mathbf{w}_1+\mathbf{c}_1)}^2})^{\frac{1}{2}},1\Big\}> 0$ and $\tilde{\mathbf{y}}_0:=\mathbf{V}\begin{bmatrix}
				\mathbf{\Sigma}_0^{-2}\mathbf{c}_1\\\theta\mathbf{x}_0
			\end{bmatrix}$. Then 
			\begin{equation}\label{eq: error bound ,1}
				(1-\theta)^2\leq 1-\theta^2\leq \frac{\vert\norm{\mathbf{\Sigma}_0^{-2}(\mathbf{w}_1+\mathbf{c}_1)}^2-\norm{\mathbf{\Sigma}_0^{-2}\mathbf{c}_1}^2\vert}{1-\norm{\mathbf{\Sigma}_0^{-2}(\mathbf{w}_1+\mathbf{c}_1)}^2}\leq \frac{L\norm{\mathbf{\Sigma}_0^{-2}\mathbf{w}_1}}{1-\norm{\mathbf{\Sigma}_0^{-2}(\mathbf{w}_1+\mathbf{c}_1)}^2},\end{equation} 
			where $L:=\max_{\mathbf{z}\in \cl(\mathcal{U}_1)}\{\norm{\mathbf{\Sigma}_0^{-2}\mathbf{c}_1}+2\norm{\mathbf{\Sigma}_0^{-2}\mathbf{w}_1}\}$ which is well-defined since $\mathcal{U}_1$ is assumed bounded. Since $$\norm{\mathbf{\Sigma}_0^{-2}\mathbf{c}_1}^2+\norm{\theta\mathbf{x}_0}^2\leq \norm{\mathbf{\Sigma}_0^{-2}\mathbf{c}_1}^2+\frac{1-\norm{\mathbf{\Sigma}_0^{-2}\mathbf{c}_1}^2}{1-\norm{\mathbf{\Sigma}_0^{-2}(\mathbf{w}_1+\mathbf{c}_1)}^2}(1-\norm{	\mathbf{\Sigma}_0^{-2}(\mathbf{w}_1+\mathbf{c}_1)}^2)=1,$$
			then $\tilde{\mathbf{y}}_0\in \mathcal{R}_{\mathbf{A},\mathbf{b}}(\mathbf{0})$.
			We have that 
			\[
			\begin{aligned}\rho_{DH}(\{\mathbf{y}_0\},\mathcal{R}_{\mathbf{A},\mathbf{b}}(\mathbf{0}))&\leq \norm{\mathbf{y}_0-\tilde{\mathbf{y}}_0}= (\norm{\mathbf{\Sigma}_0^{-2}\mathbf{w}_1}^2+(1-\theta)^2\norm{\mathbf{x}_0}^2)^{\frac{1}{2}}\\&\leq (\norm{\mathbf{\Sigma}_0^{-2}\mathbf{w}_1}^2+(1-\theta^2)(1-\norm{	\mathbf{\Sigma}_0^{-2}(\mathbf{w}_1+\mathbf{\Sigma}_0\mathbf{c}_1)}^2))^{\frac{1}{2}}\\
				&\leq (\norm{\mathbf{\Sigma}_0^{-2}\mathbf{w}_1}^2+L\norm{\mathbf{\Sigma}_0^{-2}\mathbf{w}_1})^{\frac{1}{2}}\leq L'\norm{\mathbf{\Sigma}_0^{-2}\mathbf{w}_1}^{\frac{1}{2}}\leq \frac{L'}{\sigma_r}\norm{\mathbf{w}_1}^{\frac{1}{2}}\leq \frac{L'}{\sigma_r}\norm{\mathbf{z}}^{\frac{1}{2}},
			\end{aligned}
			\]
			where the second inequality follows from the fact that $\norm{\mathbf{x}_0}^2+\norm{\mathbf{\Sigma}_0^{-2}(\mathbf{w}_1+\mathbf{c}_1)}^2\leq 1$, the third from \eqref{eq: error bound ,1}, the fourth from $L'=\max_{\mathbf{z}\in\cl(\mathcal{U}_1)}(\norm{\mathbf{\Sigma}_0^{-2}\mathbf{w}_1}+L)$, and the last from $\norm{\mathbf{z}}^2=\norm{\mathbf{w}}^2=\norm{\mathbf{w}_1}^2+\norm{\mathbf{w}_2}^2$.
			Therefore, by Proposition~\ref{prop:equivalent}, $\mathcal{R}_{\mathbf{A},\mathbf{b}}\in\mathcal{H}_{\mathbf{0}}(\frac{1}{2},\frac{L'}{\sigma_r})$. 
			
			\textbf{Part II. The case  $\tilde{\Delta}_{\mathbf{A},\mathbf{b}}(\mathbf{0})=\norm{\mathbf{\Sigma}_0^{-2}\mathbf{c}_1}-1>0$.} 
			Likewise, there exists a neighborhood $\mathcal{U}_2$ of $\mathbf{0}$, such that for any $\mathbf{z}\in\mathcal{U}_2$, we have 
			$\tilde{\Delta}_{\mathbf{A},\mathbf{b}}(\mathbf{z})=\norm{	\mathbf{\Sigma}_0^{-2}(\mathbf{w}_1+\mathbf{\Sigma}_0\mathbf{c}_1)}^2-1=\sum_{i=1}^r\frac{(w_i+c_i)^2}{\sigma_i^4}-1> 0$ and thus $ \mathcal{R}_{\mathbf{A},\mathbf{b}}(\mathbf{z})=\bigg\{\mathbf{V}\begin{bmatrix}
				(\mathbf{\Sigma}_0^2+2\lambda_{\mathbf{A},\mathbf{b}}(\mathbf{z})\mathbf{I}_{r})^{-1}(\mathbf{w}_1+\mathbf{c}_1)\\\frac{1}{2\lambda_{\mathbf{A},\mathbf{b}}(\mathbf{z})}\mathbf{w}_2
			\end{bmatrix}\bigg\}$ with $\lambda_{\mathbf{A},\mathbf{b}}(\mathbf{z})>0$ satisfies~\eqref{eq: first order equation for special problem 2}. So   $\mathcal{R}_{\mathbf{A},\mathbf{b}}$ is a single-valued mapping on $\mathcal{U}_2$.
			
			Note that $\lambda_{\mathbf{A},\mathbf{b}}(\mathbf{z})$ satisfies~\eqref{eq: first order equation for special problem 2}, which is equivalent to the condition that $\lambda_{\mathbf{A},\mathbf{b}}(\mathbf{z})$ is the unique positive element satisfying the implicit equation for $\lambda$:
			\begin{equation}\label{eq: implicit equation}
				m(\lambda,\mathbf{z}):=\frac{\norm{\mathbf{w}_2}^2}{4\lambda^2}+\sum_{i=1}^r\frac{(w_i+c_i)^2}{(\sigma_i^2+2\lambda)^2}-1=0.
			\end{equation} 
			Since for $\mathbf{z}\in\mathcal{U}_2$,  $\tilde{\Delta}_{\mathbf{A},\mathbf{b}}(\mathbf{z})=\sum_{i=1}^r\frac{(w_i+c_i)^2}{\sigma_i^4}-1>0$, then there exists at least one $|c_{i_0}+w_{i_0}|>0$. Note that $\frac{\partial m}{\partial \lambda}(\lambda,\mathbf{z})=-\frac{\norm{\mathbf{w}_2}^2}{2\lambda^3}-4\sum_{i=1}^r\frac{(w_i+c_i)^2}{(\sigma_i^2+2\lambda)^3}$ and $\lambda_{\mathbf{A},\mathbf{b}}(\mathbf{z})>0$, then $\frac{\partial m}{\partial \lambda}(\lambda_{\mathbf{A},\mathbf{b}}(\mathbf{z}),\mathbf{z})<0$ for any $\mathbf{z}\in\mathcal{U}_2$. Thus by the classical implicit function theorem~\cite[Theorem~9.2]{JRMunkres1991}, we get that $\lambda_{\mathbf{A},\mathbf{b}}(\mathbf{z})$ is a $\mathcal{C}^1$ function of $\mathbf{z}$ in a compact convex neighborhood $\mathcal{U}_3\subseteq \mathcal{U}_2$ of $\mathbf{0}$.  Therefore $\mathcal{R}_{\mathbf{A},\mathbf{b}}$ is $\mathcal{C}^1$ on $\mathcal{U}_3$.
			Since $\mathcal{U}_3$ is assumed to be  compact, then there exists $L>0$ such that $\norm{d\mathcal{R}_{\mathbf{A},\mathbf{b}}(\mathbf{z})}_2\leq L$ for all $\mathbf{z}\in \mathcal{U}_3$. Thus in this case, for all $\mathbf{z}\in\mathcal{U}_3$, we have
			\begin{equation*}
				\begin{aligned}
					\rho_{DH}(\mathcal{R}_{\mathbf{A},\mathbf{b}}(\mathbf{z}),\mathcal{R}_{\mathbf{A},\mathbf{b}}(\mathbf{0}))&=\norm{\mathcal{R}_{\mathbf{A},\mathbf{b}}(\mathbf{z})-\mathcal{R}_{\mathbf{A},\mathbf{b}}(\mathbf{0})}=\norm{\int_{0}^{1}d\mathcal{R}_{\mathbf{A},\mathbf{b}}(\tau\mathbf{z})[\mathbf{z}]d\tau}
					\\&\leq \int_0^1\norm{d\mathcal{R}_{\mathbf{A},\mathbf{b}}(\tau\mathbf{z})}_2\norm{\mathbf{z}}d\tau \leq L\norm{\mathbf{z}}.
				\end{aligned}
			\end{equation*}
			Therefore, by Proposition~\ref{prop:equivalent},  $\mathcal{R}_{\mathbf{A},\mathbf{b}}\in\mathcal{H}_{\mathbf{0}}(1,L)$.  
			
			\textbf{Part III. The case  $\tilde{\Delta}_{\mathbf{A},\mathbf{b}}(\mathbf{0})=\norm{\mathbf{\Sigma}_0^{-2}\mathbf{c}_1}-1=0$.}  In this case, $\mathcal{R}_{\mathbf{A},\mathbf{b}}(\mathbf{0})=\bigg\{\mathbf{V}\begin{bmatrix}
				\mathbf{\Sigma}_0^{-2}\mathbf{c}_1\\\mathbf{0}
			\end{bmatrix}\bigg\}$. 
			In the following, we have two cases depending on $\mathbf{z}$. 
			
			\textbf{Part III(a). The case $\tilde{\Delta}_{\mathbf{A},\mathbf{b}}(\mathbf{z})>0$.}
			Note that $\lambda_{\mathbf{A},\mathbf{b}}(\mathbf{z})>0$ satisfies~\eqref{eq: implicit equation} which is equivalent to $r_{\mathbf{z}}(\lambda_{\mathbf{A},\mathbf{b}}(\mathbf{z}))=0$ with
			\begin{equation}\label{eq: implicit function,2}
				r_{\mathbf{z}}(\lambda):=\norm{\mathbf{w}_2}^2-4\lambda^2(1-\sum_{i=1}^r(\frac{w_i+c_i}{\sigma_i^2+2\lambda})^2).
			\end{equation}
			By this reformulation, the case when $\mathbf{z}=\mathbf{0}$ and $\lambda=0$ is covered by  ~\eqref{eq: implicit function,2} as well.

			For the equation $r_{\mathbf{z}}(\lambda_{\mathbf{A},\mathbf{b}}(\mathbf{z}))=0$, by eliminating the denominators of \eqref{eq: implicit function,2}, we get that $\lambda_{\mathbf{A},\mathbf{b}}(\mathbf{z})$ is a root of a (real) polynomial in $\mathbb{R}[w_1,\dots,w_n][\lambda]$ with $\mathbb{R}[w_1,\dots,w_n]$ the real polynomial ring of the variables $w_1,\dots,w_n$. Note that $\mathbb{R}[w_1,\dots,w_n]=\mathbb{R}[z_1,\dots,z_n]$. Then by the multivariate version of the Newton-Puiseux theorem (cf.\ \cite[Theorem~3.6]{McDonald1995}), we know that $\lambda_{\mathbf{A},\mathbf{b}}(\mathbf{z})$ can be expressed as a fractional power series of $z_1,\dots,z_n$, that is,
			\begin{equation}\label{eq:NP_series} 
				\lambda_{\mathbf{A},\mathbf{b}}(\mathbf z)=\sum_{\mathbf{k} \in C\cap\frac{1}{M}\mathbb{Z}^{n}}p_{\mathbf{k}} \mathbf{z}^{\mathbf k}
			\end{equation} 
			for some $0<M\in \mathbb{Z}$ and $C:=\cup_{i=1}^K\{\lambda \mathbf{h}_i:\lambda\in\mathbb{R}_+\}$ a finitely generated cone with certain finitely many generators $\mathbf{h}_i\in\mathbb{R}^n$. Here $\mathbf{z}^\mathbf{k}:=z_1^{k_1}z_2^{k_2}\cdots z_n^{k_n}$ with $\mathbf{k}=(k_1,\dots,k_n)$. 
			
			Since $\lambda_{\mathbf{A},\mathbf{b}}(\mathbf{0})=0$, then by the continuity of the root of a polynomial with respect to the coefficients (cf.\ \ \cite[Section VI.I, Chapter VI]{bhatia2013matrix}), we know that the exponent $\mathbf{k}$ in the summation on the righthand side of \eqref{eq:NP_series} with $p_{\mathbf{k}}\neq 0$ must satisfy $\mathbf{k}\in \mathbf{R}_+^n$ and $\mathbf{k}\neq \mathbf{0}$. Consequently, as $\norm{\mathbf{z}}^2=\norm{\mathbf{w}}^2\geq |w_i|^2$ for any $1\leq i\leq n$, there exists $\alpha>0$ such that $\lambda_{\mathbf{A},\mathbf{b}}(\mathbf{z})=O(\norm{\mathbf{z}}^\alpha)=O(\norm{\mathbf{w}}^\alpha)$ when $\mathbf{z}\to\mathbf{0}$.  In the following, we give an estimation of  $\alpha$.
			
			Take Taylor's formula for~\eqref{eq: implicit function,2} at $\lambda=0$, we get that 
			\begin{equation*}
				\begin{aligned}
					r_{\mathbf{z}}(\lambda)&=r_{\mathbf{z}}(0)+r_{\mathbf{z}}'(0)\lambda+\frac{1}{2}r_{\mathbf{z}}''(0)\lambda^2+\frac{1}{6}
					r_{\mathbf{z}}^{(3)}(0)\lambda^3+\frac{1}{24}r_{\mathbf{z}}^{(4)}(\zeta)\lambda^4\\&=\norm{\mathbf{w}_2}^2-4(1-\sum_{i=1}^r\frac{(w_i+c_i)^2}{\sigma_i^4})\lambda^2-16\sum_{i=1}^r\frac{(w_i+c_i)^2}{\sigma_i^6}\lambda^3+\frac{1}{24}
					r_{\mathbf{z}}^{(4)}(\xi)\lambda^4\\
					&=\norm{\mathbf{w}_2}^2+4\sum_{i=1}^r\frac{2c_iw_i+w_i^2}{\sigma_i^4}\lambda^2
					-16\sum_{i=1}^r\frac{(w_i+c_i)^2}{\sigma_i^6}\lambda^3+\frac{1}{24}r_{\mathbf{z}}^{(4)}(\xi)\lambda^4,\end{aligned}
			\end{equation*}
			where $\xi\in(0,\lambda)$ and the last equality follows from the relation $\sum_{i=1}^r\frac{c_i^2}{\sigma_i^4}=1$.
			So combining this with $r_{\mathbf{z}}(\lambda_{\mathbf{A},\mathbf{b}}(\mathbf{z}))=0$, we get that 
			\begin{equation}\label{eq: implicit function, 3}
				\norm{\mathbf{w}_2}^2=-4\sum_{i=1}^r\frac{2c_iw_i+w_i^2}{\sigma_i^4}\lambda_{\mathbf{A},\mathbf{b}}(\mathbf{z})^2
				+16\sum_{i=1}^r\frac{(w_i+c_i)^2}{\sigma_i^6}\lambda_{\mathbf{A},\mathbf{b}}(\mathbf{z})^3-\frac{1}{24}r_{\mathbf{z}}^{(4)}(\xi(\mathbf{z}))\lambda_{\mathbf{A},\mathbf{b}}(\mathbf{z})^4,
			\end{equation}
			with $\xi(\mathbf{z})\in (0,\lambda_{\mathbf{A},\mathbf{b}}(\mathbf{z}))$. By a direct calculation, we get that 
			\begin{equation}\label{eq: 4th order derivative}
				\frac{1}{24}r_{\mathbf{z}}^{(4)}(\lambda)=48\sum_{i=1}^r\frac{(w_i+c_i)^2}{(\sigma_i^2+2\lambda)^4}-256\lambda\sum_{i=1}^r\frac{(w_i+c_i)^2}{(\sigma_i^2+2\lambda)^5}+320\lambda^2\sum_{i=1}^r\frac{(w_i+c_i)^2}{(\sigma_i^2+2\lambda)^6}.
			\end{equation} 
			
			\begin{itemize}
				\item[$\bullet$]	If $\norm{\mathbf{w}_2}=0$, then by~\eqref{eq: implicit function, 3},~\eqref{eq: 4th order derivative} and  $\lambda_{\mathbf{A},\mathbf{b}}(\mathbf{z})>0$ when $\mathbf{z}\neq \mathbf{0}$, we get that for $\mathbf{z}\neq \mathbf{0}$ sufficiently small, 
				\begin{equation*}
					\begin{aligned}
						4\sum_{i=1}^r\frac{2c_iw_i+w_i^2}{\sigma_i^4}&=16\sum_{i=1}^r\frac{(w_i+c_i)^2}{\sigma_i^6}\lambda_{\mathbf{A},\mathbf{b}}(\mathbf{z})-\frac{1}{24}r_{\mathbf{z}}^{(4)}(\xi(\mathbf{z}))\lambda_{\mathbf{A},\mathbf{b}}(\mathbf{z})^2
						=O(\norm{\mathbf{w}}^\alpha),
					\end{aligned}
				\end{equation*}
				since $r_{\mathbf{z}}^{(4)}(\xi(\mathbf{z}))$ and $16\sum_{i=1}^r\frac{(w_i+c_i)^2}{\sigma_i^6}$ are bounded when $\norm{\mathbf{z}}$ is sufficiently small, and $c_i(1\leq i\leq r)$ are not all $0$ as $0=\tilde{\Delta}_{\mathbf{A},\mathbf{b}}(\mathbf{0})=\norm{\Sigma_0^{-2}\mathbf{c}_1-1}-1=\sum_{i=1}^r\frac{c_i^2}{\sigma_i^4}-1$ in this case. Thus,  we must have $\alpha=1$.
				
				\item[$\bullet$]	If $\norm{\mathbf{w}_2}\neq 0$, then the term with the lowerest exponet  on $\|\mathbf w\|$ in both sides of \eqref{eq: implicit function, 3} should have combined zero coefficient. Thus, by  $\lambda_{\mathbf{A},\mathbf{b}}(\mathbf{z})>0$ when $\mathbf{z}\neq \mathbf{0}$, we get that for $\mathbf{z}\neq \mathbf{0}$ sufficiently small, it holds
				\begin{equation*}
					\begin{aligned}
						\norm{\mathbf{w}_2}^2=O(\max\{\norm{\mathbf{w}}^{1+2\alpha},\norm{\mathbf{w}}^{3\alpha}\}).
					\end{aligned}
				\end{equation*}
				Hence, we must have $\alpha=\frac{2}{3}$.
			\end{itemize}	
			Therefore $\lambda_{\mathbf{A},\mathbf{b}}=O(\norm{\mathbf{z}}^{\frac{2}{3}})$ in all cases.

			In this case, denote 
			\[ 
			\mathbf{d}(\mathbf{z}):=\begin{bmatrix}(\mathbf{\Sigma}_0^2+2\lambda_{\mathbf{A},\mathbf{b}}(\mathbf{z})\mathbf{I}_{r})^{-1}(\mathbf{w}_1+\mathbf{c}_1)\\\frac{1}{2\lambda_{\mathbf{A},\mathbf{b}}(\mathbf{z})}\mathbf{w}_2\end{bmatrix}=(d_1(\mathbf{z}),\dots,d_n(\mathbf{z}))^{\tp}
			\] 
			and $\mathbf{d}({\mathbf{0}}):=\begin{bmatrix}
				\mathbf{\Sigma}_0^{-2}\mathbf{c}_1\\\mathbf{0}
			\end{bmatrix}=(d_1(\mathbf{0}),\dots,d_r(\mathbf{0}),0,\dots,0)^{\tp}$, then $\mathcal{R}_{\mathbf{A},\mathbf{b}}(\mathbf{0})=\{\mathbf{V}\mathbf{d}(\mathbf{0})\}$ and $\mathcal{R}_{\mathbf{A},\mathbf{b}}(\mathbf{z})=\{\mathbf{V}\mathbf{d}(\mathbf{z})\}$.
			By a direct calculation, we get that $\{d_i(\mathbf{z})\}$ and $\lambda_{\mathbf{A},\mathbf{b}}(\mathbf{z})>0$ is the solution of the following nonlinear equations with respect to $\{d_i\}$ and $\lambda>0$:
			\begin{equation}\label{eq: nonlinear equation}
				\begin{cases}
					(\sigma_i^2+2\lambda)d_i=w_i+c_i ,&1\leq i\leq r,\\
					2\lambda d_i=w_i,& r+1\leq i\leq n,\\
					\sum_{i=1}^n d_i^2=1.&
				\end{cases}
			\end{equation}
			Since $\lambda_{\mathbf{A},\mathbf{b}}(\mathbf{z})>0$, then we can solve \eqref{eq: nonlinear equation} to get that $d_i(\mathbf{z})=\frac{w_i+c_i}{\sigma_i^2+2\lambda_{\mathbf{A},\mathbf{b}}(\mathbf{z})}$ for $1\leq i\leq r$ and $\sum_{i=r+1}^nd_i(\mathbf{z})^2=1-\sum_{i=1}^rd_i(\mathbf{z})^2=1-\sum_{i=1}^r(\frac{w_i+c_i}{\sigma_i^2+2\lambda_{\mathbf{A},\mathbf{b}}(\mathbf{z})})^2$.
			Note that the squared directed Hausdorff distance $(\rho_{DH}(\mathcal{R}_{\mathbf{A},\mathbf{b}}(\mathbf{z}),\mathcal{R}_{\mathbf{A},\mathbf{b}}(\mathbf{0})))^2=\norm{\mathbf{d}_i(\mathbf{z})-\mathbf{d}_i(\mathbf{0})}^2=\sum_{i=r+1}^nd^2_i(\mathbf{z})+\sum_{i=1}^r(\frac{c_i}{\sigma_i^2}-d_i(\mathbf{z}))^2$ can then be expressed by $M_{\mathbf{z}}(\lambda_{\mathbf{A},\mathbf{b}}(\mathbf{z}))$, where
			\begin{equation}\label{eq: directed Hausdorff distance}
				M_{\mathbf{z}}(\lambda):=1-\sum_{i=1}^r(\frac{w_i+c_i}{\sigma_i^2+2\lambda})^2+\sum_{i=1}^r(\frac{c_i}{\sigma_i^2}-\frac{w_i+c_i}{\sigma_i^2+2\lambda})^2
				=2-2\sum_{i=1}^r\frac{c_i(w_i+c_i)}{\sigma_i^2(\sigma_i^2+2\lambda)},
			\end{equation}
			where the second equality follows from the fact that $\sum_{i=1}^r\frac{c_i^2}{\sigma_i^4}=1$. 
			With~\eqref{eq: directed Hausdorff distance}, by Taylor's formula and the relation $\sum_{i=1}^r\frac{c_i^2}{\sigma_i^4}=1$, we get that when $\norm{\mathbf{z}}$ sufficiently small,
			\begin{equation}\label{eq:ball-ls-exp-1}
				\begin{aligned}
					(\rho_{DH}(\mathcal{R}_{\mathbf{A},\mathbf{b}}(\mathbf{z}),\mathcal{R}_{\mathbf{A},\mathbf{b}}(\mathbf{0})))^2&=M_{\mathbf{z}}(\lambda_{\mathbf{A},\mathbf{b}}(\mathbf{z}))\\&=-2\sum_{i=1}^r\frac{w_ic_i}{\sigma_i^4}+4\sum_{i=1}^r\frac{c_i(w_i+c_i)}{\sigma_i^6}\lambda_{\mathbf{A},\mathbf{b}}(\mathbf{z})+o(\lambda_{\mathbf{A},\mathbf{b}}(\mathbf{z}))\\
					&=4\sum_{i=1}^r\frac{c_i^2}{\sigma_i^6}\lambda_{\mathbf{A},\mathbf{b}}(\mathbf{z})-2\sum_{i=1}^r\frac{w_ic_i}{\sigma_i^4}+4\sum_{i=1}^r\frac{c_iw_i}{\sigma_i^6}\lambda_{\mathbf{A},\mathbf{b}}(\mathbf{z})+o(\lambda_{\mathbf{A},\mathbf{b}}(\mathbf{z}))\\
					&=O(\lambda_{\mathbf{A},\mathbf{b}}(\mathbf{z}))=O(\norm{\mathbf{z}}^{\frac{2}{3}}).
				\end{aligned}
			\end{equation}
			
			\textbf{Part III(b). The case $\tilde{\Delta}_{\mathbf{A},\mathbf{b}}(\mathbf{z})\leq 0$.} Let $\epsilon_0>0$ and assume that $\mathbf{z}\in\epsilon_0\mathcal{B}$. In this case, we have $\mathcal{R}_{\mathbf{A},\mathbf{b}}(\mathbf{z})=\bigg\{\mathbf{V}\begin{bmatrix}
				\mathbf{\Sigma}_0^{-2}(\mathbf{w}_1+\mathbf{c}_1)\\\mathbf{x}
			\end{bmatrix}:\norm{\mathbf{x}}^2+\norm{\mathbf{\Sigma}_0^{-2}(\mathbf{w}_1+\mathbf{c}_1)}^2\leq 1$\bigg\}.
			So for any $\mathbf{y}\in \mathcal{R}_{\mathbf{A},\mathbf{b}}(\mathbf{z})$, there exists $\mathbf{x}\in\mathbb{R}^{n-r}$ such that $\mathbf{y}=\mathbf{V}\begin{bmatrix}
				\mathbf{\Sigma}_0^{-2}(\mathbf{w}_1+\mathbf{c}_1)\\\mathbf{x}
			\end{bmatrix}$ and $\norm{\mathbf{x}}^2+\norm{	\mathbf{\Sigma}_0^{-2}(\mathbf{w}_1+\mathbf{c}_1)}^2\leq 1$. Recall that $\tilde{\Delta}_{\mathbf{A},\mathbf{b}}(\mathbf{0})=\norm{\mathbf{\Sigma}_0^{-2}\mathbf{c}_1}^2-1=0$. Thus 
			$$
			\begin{aligned}
				\norm{\mathbf{x}}^2&\leq 1-\norm{\mathbf{\Sigma}_0^{-2}(\mathbf{w}_1+\mathbf{c}_1)}^2=\norm{\mathbf{\Sigma}_0^{-2}\mathbf{c}_1}^2-\norm{\mathbf{\Sigma}_0^{-2}(\mathbf{w}_1+\mathbf{c}_1)}^2=\langle \mathbf{\Sigma}_0^{-2}(\mathbf{w}_1+2\mathbf{c}_1),-\mathbf{\Sigma}_0^{-2}\mathbf{w}_1\rangle
				\\&\leq \norm{\mathbf{\Sigma}_0^{-2}(\mathbf{w}_1+2\mathbf{c}_1)}\norm{\mathbf{\Sigma}_0^{-2}\mathbf{w}_1}\leq L\norm{\mathbf{\Sigma}_0^{-2}\mathbf{w}_1}\leq \frac{L}{\sigma_r^2}\norm{\mathbf{w}_1},
			\end{aligned}$$
			where  $L:=\max_{\mathbf{z}\in\epsilon_0\mathcal{B}}\norm{\mathbf{\Sigma}_0^{-2}(\mathbf{w}_1+2\mathbf{c}_1)}$, which is well-defined since $\epsilon_0\mathcal{B}$ is compact.	
			Therefore for any $\mathbf{z}\in\epsilon_0\mathcal{B}$ with $\tilde{\Delta}_{\mathbf{A},\mathbf{b}}(\mathbf{z})\leq 0$, there exists $\mathbf{y}\in\mathcal{R}_{\mathbf{A},\mathbf{b}}(\mathbf z)$ such that 
			\begin{equation}\label{eq: singular case for upper holder continuity, 2}
				\begin{aligned}
					\rho_{DH}(\mathcal{R}_{\mathbf{A},\mathbf{b}}(\mathbf{z}),\mathcal{R}_{\mathbf{A},\mathbf{b}}(\mathbf{0}))&=	\rho_{DH}(\mathbf{y},\mathcal{R}_{\mathbf{A},\mathbf{b}}(\mathbf{0}))=(\norm{\mathbf{x}}^2+\norm{\mathbf{\Sigma}_0^{-2}\mathbf{w}_1}^2)^{\frac{1}{2}}\\
					&\leq \frac{1}{\sigma_r}(L+\norm{\mathbf{\Sigma}_0^{-2}\mathbf{w}_1})^{\frac{1}{2}}\norm{\mathbf{w}_1}^{\frac{1}{2}}\leq \frac{1}{\sigma_r}L'\norm{\mathbf{w}}^{\frac{1}{2}}=\frac{1}{\sigma_r}L'\norm{\mathbf z}^{\frac{1}{2}},
				\end{aligned}
			\end{equation}
			where $L':=\max_{\mathbf{z}\in\epsilon_0\mathcal{B}}(L+\norm{\mathbf{\Sigma}_0^{-2}\mathbf{w}_1})^{\frac{1}{2}}$, which is bounded since $\epsilon_0\mathcal{B}$ is compact.

			Combining~\eqref{eq:ball-ls-exp-1} and~\eqref{eq: singular case for upper holder continuity, 2}, we get that $\mathcal{R}_{\mathbf{A},\mathbf{b}}\in \mathcal{H}_{\mathbf{0}}(\frac{1}{3},M)$ for some constant $M>0$. 	 
		\end{proof}

		\begin{remark}\label{rmk: the Holder exponent}
			It follows from Proposition~\ref{prop:holder-calm} that when $\tilde{\Delta}_{\mathbf{A},\mathbf{b}}(\mathbf{0})> 0$, the H\"older exponent can be $1$, which means that $\mathcal{R}_{\mathbf{A},\mathbf{b}}$ is actually upper Lipschitz continuous at $\mathbf{0}$. 
			
			However, from the proof Proposition~\ref{prop:holder-calm}, when $\tilde{\Delta}_{\mathbf{A},\mathbf{b}}(\mathbf{0})\leq 0$, $\mathcal{R}_{\mathbf{A},\mathbf{b}}$ cannot be upper Lipschitz continuous. This is illustrated by the next example. 
		\end{remark}
		
		\begin{example}\label{exm:fractional}
			We present an axample to showcase the fact that the exponent $\alpha=\frac{1}{3}$ in Proposition~\ref{prop:holder-calm} is tight. Notations in the proof of Proposition~\ref{prop:holder-calm} are inherited.
			
			Let $\mathbf{A}=\begin{bmatrix}
				1&0\\
				0&0
			\end{bmatrix}$, and $\mathbf{b}=(1,0)^{\tp}$. Then $\mathbf{A}$ itself is diagonal, so $\mathbf{\Sigma}_0=[1], \mathbf{c}=\mathbf{\Sigma}_0\mathbf{b}=(1,0)^{\tp}, \mathbf{c}_1=(1)$ and $\mathbf{w}=\mathbf{z}=(z_1,z_2)^{\tp}$. We see that $\tilde{\Delta}_{\mathbf{A},\mathbf{b}}(\mathbf{0})=\norm{\mathbf{\Sigma}_0^{-2}\mathbf{c}_1}-1=0$ and the equation \eqref{eq: implicit equation} for $\lambda$ becomes 
			\begin{equation*}
				\frac{z_2^2}{4\lambda^2}+\frac{(z_1+1)^2}{(1+2\lambda)^2}-1=0,
			\end{equation*}
			which clearly can be reduced to the following quartic equation for $\lambda$:
			\begin{equation}\label{eq:poly_equa}
				\lambda^4+\lambda^3-\frac{1}{4}(z_1^2+z_2^2+2z_1)\lambda^2-\frac{1}{4}z_2^2\lambda-\frac{1}{16}z_2^2=0.
			\end{equation}
			By the proof of Proposition~\ref{prop:holder-calm},  when $z_2\neq 0$, the solution $\lambda_{\mathbf{A},\mathbf{b}}=O(\norm{\mathbf{z}}^{\frac{2}{3}})$ and  $\rho_{DH}(\mathcal{R}_{\mathbf{A},\mathbf{b}}(\mathbf{z}),\mathcal{R}_{\mathbf{A},\mathbf{b}}(\mathbf{0}))=O(\norm{\mathbf{z}}^{\frac{1}{3}})$.  We claim that when the point $\mathbf{z}$ tends to $\mathbf{0}$ along the line $z_1=0$, there exist $0<C_1<C_2$ such that $C_1\norm{\mathbf{z}}^{\frac{2}{3}}<\lambda_{\mathbf{A},\mathbf{b}}<C_2\norm{\mathbf{z}}^{\frac{2}{3}}$, and thus the exponent $\frac{2}{3}$ is tight.
			
			The claim can be shown  by computing the asymptotic series of $\lambda$ with respect to $z_2$ around $z_2=0$ when analyzing the asymptotic behavior of the solution $\lambda_{\mathbf{A},\mathbf{b}}$ along the line $z_1=0$. Via the function ``AsymptoticSolve[ ]" in the symbolic computational software Mathematica, we can get that when $|z_2|$ is small, the unique positive real root has the asymptotic expansion as  \begin{equation}
				\lambda_{\mathbf{A},\mathbf{b}}=2^{-\frac{4}{3}}z_2^{\frac{2}{3}}+2^{-\frac{8}{3}}z_2^{\frac{4}{3}}+\text{higher order terms}=2^{-\frac{4}{3}}\norm{\mathbf{z}}^{\frac{2}{3}}+o(\norm{\mathbf{z}}^{\frac{2}{3}}).
			\end{equation}
			Here the second equality holds since $\norm{\mathbf{z}}=|z_2|$ when $\mathbf{z}$ varies along the line $z_1=0$. The directed Hausdorff distance $O(\norm{\mathbf z}^{\frac{1}{3}})$ between solution sets follows from a further analysis as \eqref{eq:ball-ls-exp-1}. Therefore the claim holds.
		\end{example}
		
		\subsection{Parameteric Ball-LS}
		In this subsection, we  apply Theorem~\ref{thm:holder-ge} to the problem~\ref{eq:ball-nls}. To that end, we recall the generalized equation representation for a parametric convex programming:
		\begin{equation}\label{eq: parametric convex programming}
			\min_{\mathbf{y}\in \mathcal{D}(\mathbf{p})}f(\mathbf{p},\mathbf{y}),
		\end{equation}
		where  $f:\Omega\times \mathbb{R}^n\longrightarrow\mathbb{R}$ with $\Omega\subseteq \mathbb{R}^m$, $f(\mathbf p,\cdot)$ is a proper closed convex function for any $\mathbf{p}\in\Omega$, and $\mathcal{D}:\Omega\rightrightarrows \mathbb{R}^n$ be a set-valued mapping such that for any $\mathbf{p}\in\dom(\mathcal{D})$ the set $\mathcal{D}(\mathbf{p})$ is convex. Then by \cite[Theorem~3.1]{dhara2012optimality}, for any $\mathbf p\in\Omega$, we get that $\mathbf{y}_0\in \mathcal{D}(\mathbf{p})$ is an optimal solution of~\eqref{eq: parametric convex programming} if and only if
		\begin{equation}\label{eq:gen-eqn-gen}
			\mathbf{0}\in \partial_{\mathbf{y}}f(\mathbf{p},\mathbf{y}_0)+ N_{\mathcal{D}(\mathbf{p})}(\mathbf{y}_0), 
		\end{equation}
		where $\partial_{\mathbf{y}}f(\mathbf{p},\mathbf{y}_0)$ is the subdifferential of $f(\mathbf p,\cdot)$ at $\mathbf{y}_0$ and $N_{\mathcal{D}(\mathbf{p})}(\mathbf{y}_0)$ is the normal cone of $\mathcal{D}(\mathbf{p})$ at $\mathbf{y}_0$. In particular,
		the optimality condition of problem~\ref{eq:ball-nls} reads
		\begin{equation}\label{eq: KKT system of the generalized equation form}
			\mathbf{0}\in \mathbf{A}^{\tp}(\mathbf{Ay}-\mathbf{b})+N_{\mathcal{B}(R)}(\mathbf{y}),
		\end{equation}
		where $N_{\mathcal{B}(R)}(\mathbf{y})$ is calculated as
		\begin{equation*}\label{eq: normal cone of closed ball}
			N_{\mathcal{B}(R)}(\mathbf{y})=\begin{cases}
				\{\mathbf{0}\}, &\norm{\mathbf{y}}<R,\\
				\{\lambda \mathbf{y}:\lambda\geq 0\},&\norm{\mathbf{y}}=R,\\
				\emptyset,&\norm{\mathbf{y}}>R.
			\end{cases}
		\end{equation*}
		Note that the convention $\mathbf{x}+\emptyset=\emptyset$ is adopted in this paper.
		
		Note that the set-valued mapping $\mathcal{T}$ in Theorem~\ref{thm:holder-ge} is parameter free, while we treat $R$ as a parameter for the problem \ref{eq:ball-nls}. Thus, a direct application of Theorem~\ref{thm:holder-ge} is not immediate. Nevertheless,  we can consider the case when $R=1$ and employ a scaling $\mathbf{x}:=\frac{\mathbf{y}}{R}$ when $R\neq 0$, i.e., 
		\begin{equation}\label{eq:ball-nls substituted}
			\begin{array}{rl}
				\min_{\mathbf{x}} & \frac{1}{2}\norm{R\mathbf{Ax}-\mathbf{b}}^2\\ 
				\text{s.t.} &\norm{\mathbf{x}}\leq 1.
			\end{array}
		\end{equation}
		The optimal value mapping $\tilde{\mathcal{V}}$ and the solution mapping $\tilde{\mathcal{S}}$ of~\eqref{eq:ball-nls substituted}  are related to those of~\ref{eq:ball-nls} as follows
		\begin{equation}\label{eq:opt-val-solu-scal} 
			\tilde{\mathcal{V}}(\mathbf{A},\mathbf{b},R)=\mathcal{V}(\mathbf{A},\mathbf{b},R),\ \text{and } \tilde{\mathcal{S}}(\mathbf{A},\mathbf{b},R)=\frac{1}{R}\mathcal{S}(\mathbf{A},\mathbf{b},R). 
		\end{equation}   
		Therefore, we can study the case when $R=1$ and employ the scaling $\mathbf{x}:=\frac{\mathbf{y}}{R}$ when $R\neq 0$ to convert the general case to the problem~\eqref{eq:ball-nls substituted}. 
		The optimality condition of~\eqref{eq:ball-nls substituted} is
		\begin{equation}\label{eq:opt-cond-scal}
			\mathbf{0}\in R\mathbf{A}^{\tp}(R\mathbf{Ax}-\mathbf{b})+N_{\mathcal{B}}(\mathbf{x}).
		\end{equation}
		The following lemma shows that the parametric generalized equation~\eqref{eq:opt-cond-scal} fulfills the conditions of Theorem~\ref{thm:holder-ge}.  
		
		Recall that  $\mathbb{U}:=\mathbb{R}^{m\times n}\times \mathbb{R}^{m}\times \mathbb{R}_{+}$ and from \eqref{eq:criteria-delta without svd-0} that 
		\begin{equation}\label{eq:criteria-delta without svd} 
			\Delta(\mathbf{A},\mathbf{b},R)=\mathbf{b}^{\tp} \mathbf{A}((\mathbf{A}^{\tp} \mathbf{A})^{\dag})^2\mathbf{A}^{\tp} \mathbf{b}-R^2=\norm{(\mathbf{A}^{\tp} \mathbf{A})^{\dag}\mathbf{A}^{\tp} \mathbf{b}}^2-R^2.
		\end{equation} 
		\begin{lemma}\label{lem:holder-nls-exponent}
			Let $\mathbb{U}_0:=\{(\mathbf{A},\mathbf{b},R)\in\mathbb{U}:R>0\}$. Define 
			\begin{equation*}
				\begin{array}{rl}
					F:\mathbb{U}_0\times \mathbb{R}^n&\longrightarrow \mathbb{R}^n\\
					((\mathbf{A},\mathbf{b},R),\mathbf{x})&\mapsto R\mathbf{A}^{\tp}(R\mathbf{Ax}-\mathbf{b}).
				\end{array}
			\end{equation*} 
			Given $\mathbf p_0=(\mathbf{A}_0,\mathbf{b}_0,R_0)\in\mathbb{U}_0$,  the set 
			\begin{equation}\label{eq:set-x0}
				\mathcal{X}_0:=\{\mathbf{x}\in \mathbb{R}^n: \mathbf{0}\in R_0\mathbf{A}_0^{\tp}(R_0\mathbf{A}_0\mathbf{x}-\mathbf{b}_0)+N_{\mathcal{B}}(\mathbf{x})\}=\{\mathbf{x}\in \mathbb{R}^n:\mathbf{0}\in F(\mathbf p_0,\mathbf{x})+N_{\mathcal{B}}(\mathbf{x})\}
			\end{equation}
			is nonempty, compact and convex. For any $\mathbf{x}_0\in \mathcal{X}_0,\theta>0$, define $\mathcal{X}_{\theta}:=\mathcal{X}_0+\theta\mathcal{B}$, 
			\begin{equation}\label{eq:linear}
				\mathcal{L}_{\mathbf{x}_0}(\mathbf{x}):=F(\mathbf p_0,\mathbf{x}_0)+\dfrac{\partial F}{\partial \mathbf{x}}(\mathbf p_0,\mathbf{x}_0)(\mathbf{x}-\mathbf{x}_0)+N_{\mathcal{B}}(\mathbf{x})=F(\mathbf p_0,\mathbf{x})+N_{\mathcal{B}}(\mathbf{x}), 
			\end{equation}
			and $\mathcal{IL}_{\mathbf{x}_0,\theta}(\mathbf{y}):=\mathcal{L}_{\mathbf{x}_0}^{-1}(\mathbf{y})\bigcap \mathcal{X}_{\theta}$. Then there exists $\lambda,\gamma,\eta>0$ such that for every $\mathbf{x}_0\in \mathcal{X}_0$, the conditions \ref{item:ass_1}-\ref{item:ass_4} in Theorem~\ref{thm:holder-ge} hold with $\alpha$ chosen as follows:
			\begin{equation}\label{eq: choice of exponent}
				\alpha=\begin{cases}
					1&\Delta(\mathbf{A}_0,\mathbf{b}_0,R_0)>0,\\
					\frac{1}{2}& \Delta(\mathbf{A}_0,\mathbf{b}_0,R_0)<0,\\
					\frac{1}{3}&  \Delta(\mathbf{A}_0,\mathbf{b}_0,R_0)=0.
				\end{cases}
			\end{equation}  
		\end{lemma}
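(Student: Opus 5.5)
Since $F(\mathbf p_0,\cdot)$ is affine in $\mathbf x$, the linearization \eqref{eq:linear} coincides with $F(\mathbf p_0,\cdot)+N_{\mathcal B}$ for every $\mathbf x_0$. Hence $\mathcal{L}_{\mathbf{x}_0}^{-1}(\mathbf y)$ does not depend on $\mathbf x_0$, and it is exactly the solution set of a perturbed ball-constrained least squares problem. Indeed, $\mathbf y\in F(\mathbf p_0,\mathbf x)+N_{\mathcal B}(\mathbf x)$ is equivalent to $\mathbf 0\in R_0\mathbf A_0^{\tp}(R_0\mathbf A_0\mathbf x-\mathbf b_0)-\mathbf y+N_{\mathcal B}(\mathbf x)$. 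This is the optimality condition (by the convex optimality criterion \eqref{eq:gen-eqn-gen}) of
\[
\min_{\mathbf x\in\mathcal B}\tfrac12\norm{R_0\mathbf A_0\mathbf x-\mathbf b_0}^2-\inner{\mathbf x,\mathbf y}.
\]
So $\mathcal L_{\mathbf x_0}^{-1}=\mathcal R_{R_0\mathbf A_0,\mathbf b_0}$ in the notation of Proposition~\ref{prop:holder-calm}. In particular, $\mathcal X_0=\mathcal R_{R_0\mathbf A_0,\mathbf b_0}(\mathbf 0)$ is nonempty (the ball is compact), convex (it is the argmin of a convex function over a convex set), and compact (it is closed, since $N_{\mathcal B}$ is osc and Lemma~\ref{lemma: closeness of a set defined by a set-valued map} applies, and it is bounded inside $\mathcal B$). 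Next I will check that $\tilde\Delta_{R_0\mathbf A_0,\mathbf b_0}(\mathbf 0)$ has the same sign as $\Delta(\mathbf A_0,\mathbf b_0,R_0)$. Using $(\mathbf{AA}^{\tp})^{\dag}\mathbf A=\mathbf A(\mathbf A^{\tp}\mathbf A)^{\dag}$ and scaling, we get $\tilde\Delta_{R_0\mathbf A_0,\mathbf b_0}(\mathbf 0)=R_0^{-2}\norm{(\mathbf A_0^{\tp}\mathbf A_0)^{\dag}\mathbf A_0^{\tp}\mathbf b_0}^2-1=R_0^{-2}\Delta(\mathbf A_0,\mathbf b_0,R_0)$. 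Thus the exponent in \eqref{eq: choice of exponent} is the one supplied by Proposition~\ref{prop:holder-calm}.

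Verification of \ref{item:ass_1}--\ref{item:ass_3}. Take any $\gamma>0$. Since $\mathcal X_0\subseteq\mathcal X_\gamma$, we have $\mathcal{IL}_{\mathbf x_0,\gamma}(\mathbf 0)=\mathcal X_0$, which gives \ref{item:ass_1}. For \ref{item:ass_2}, Proposition~\ref{prop:holder-calm} gives a neighborhood, which contains some $\eta\mathcal B$, on which $\mathcal R(\mathbf y)\subseteq\mathcal R(\mathbf 0)+\mu\norm{\mathbf y}^\alpha\mathcal B$. Intersecting the left side with $\mathcal X_\gamma$ only shrinks it, so the inclusion persists with $\lambda:=\max\{\mu,2\}>1$. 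For \ref{item:ass_3}, $\mathcal R(\mathbf y)$ is convex and nonempty, so I must ensure it meets $\mathcal X_\gamma$ and that the intersection is convex. Convexity is clear, since it is an intersection of convex sets. For nonemptiness, shrink $\eta$ so that $\mu\eta^\alpha\le\gamma$; then $\mathcal R(\mathbf y)\subseteq\mathcal X_\gamma$ entirely, and $\mathcal{IL}_{\mathbf x_0,\gamma}(\mathbf y)=\mathcal R(\mathbf y)\neq\emptyset$.

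Verification of \ref{item:ass_4}. Here $F_2(\mathbf p_0,\mathbf x)=R_0^2\mathbf A_0^{\tp}\mathbf A_0$ is constant in $\mathbf x$. Therefore $F_2(\mathbf p_0,\cdot)$ is upper H\"older at every $\mathbf x_0$ with any exponent, in particular $\frac1\alpha-1$, and with any modulus, in particular $\tilde\lambda=\frac12\lambda^{-1/\alpha}$. The standing hypotheses also hold: $\mathcal K=\mathcal B$ is closed, but $\mathcal X_\gamma\subseteq\mathcal K$ requires care, because $\mathcal X_0$ may touch the sphere. I expect this to be the main obstacle. My plan is to take $\mathcal K=\mathbb R^n$ and use $\mathcal T=N_{\mathcal B}$ extended by $\emptyset$ outside the ball. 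This extended map is still osc, and then $\mathcal X_\theta\subseteq\mathcal K$ holds trivially. The remaining requirements are routine: $F$ is polynomial, hence continuous with continuous $F_2$ on an open set.
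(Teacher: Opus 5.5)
Your proposal is correct and follows the paper's proof essentially step for step. You identify $\mathcal L_{\mathbf x_0}^{-1}$ with $\mathcal R_{R_0\mathbf A_0,\mathbf b_0}$, invoke Proposition~\ref{prop:holder-calm} for \ref{item:ass_2}, use the H\"older inclusion to make $\mathcal{IL}_{\mathbf x_0,\gamma}(\mathbf y)$ nonempty for \ref{item:ass_3}, and use that $F_2(\mathbf p_0,\cdot)$ is constant for \ref{item:ass_4}. The extra details you supply are points the paper leaves implicit, and you handle them correctly: the identity $\tilde\Delta_{R_0\mathbf A_0,\mathbf b_0}(\mathbf 0)=R_0^{-2}\Delta(\mathbf A_0,\mathbf b_0,R_0)$, enlarging the modulus so that $\lambda>1$, and taking $\mathcal K=\mathbb R^n$ so that $\mathcal X_\gamma\subseteq\mathcal K$.
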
 
		
		\begin{proof}{Proof}
			By \eqref{eq:opt-val-solu-scal}, $\mathcal{X}_0=\tilde{\mathcal{S}}(\mathbf{A}_0,\mathbf{b}_0,R_0)=\frac{1}{R_0}\mathcal{S}(\mathbf{A}_0,\mathbf{b}_0,R_0)$, which is nonempty, convex and compact by Proposition~\ref{prop:optimal-value-solution-mapping}. 			
			In the following, take any $\mathbf x_0\in \mathcal X_0$, we examine assumptions \ref{item:ass_1}-\ref{item:ass_4} in Theorem~\ref{thm:holder-ge}.  
			\begin{enumerate}[label=(\arabic*)]
				\item For \ref{item:ass_1}, by \eqref{eq:linear}, the definition of $\mathcal{IL}_{\mathbf{x}_0,\gamma}(\mathbf{0})$ and $\mathcal{X}_0$, for any $\gamma>0$ it holds
				\[
				\mathcal{IL}_{\mathbf{x}_0,\gamma}(\mathbf{0})=\{\mathbf{x}\in\mathcal{X}_\gamma: \mathbf{x}\in \mathcal{L}_{\mathbf{x}_0}^{-1}(\mathbf{0})\}=\{\mathbf{x}\in \mathcal{X}_\gamma: \mathbf{0}\in F(\mathbf p_0,\mathbf{x})+N_{\mathcal{B}}(\mathbf{x})\}=\mathcal{X}_0.
				\]
				\item Note that by \eqref{eq:linear} and \eqref{eq:gen-eqn-gen}, for any $\mathbf{y}\in\mathbb{R}^n$, 
				\begin{multline*}\label{eq:def of L,1}
					\mathcal{L}_{\mathbf{x}_0}^{-1}(\mathbf{y})=\{\mathbf{x}\in \mathbb{R}^n:\mathbf{y}\in \mathcal{L}_{\mathbf{x}_0}(\mathbf x)\}	=\{\mathbf{x}\in\mathbb{R}^n:\mathbf{0}\in-\mathbf{y}+R_0\mathbf{A}_0^{\tp}(R_0\mathbf{A}_0\mathbf{x}-\mathbf{b}_0)+N_{\mathcal{B}}(\mathbf{x})\}\\
					=\argmin_{\mathbf{x}\in\mathcal{B}}\frac{1}{2}\norm{R_0\mathbf{A}_0\mathbf{x}-\mathbf{b}_0}^2-\inner{\mathbf{x},\mathbf{y}}.		
				\end{multline*}
				Therefore by Proposition~\ref{prop:holder-calm} (with  $\mathbf{A}_0$ replaced by $R_0\mathbf{A}_0$), there exists some $\lambda>0$ such that $\mathcal{L}^{-1}_{\mathbf{x}_0}\in \mathcal{H}_{\mathbf{0}}(\alpha,\lambda)$ with $\alpha$ chosen as~\eqref{eq: choice of exponent} and $\mathcal{L}_{\mathbf{x}_0}^{-1}(\mathbf{y})\neq \emptyset$.  Thus there exists $ \eta>0$ such that for any $\gamma>0$, $\mathcal{IL}_{\mathbf{x}_0,\gamma}=\mathcal{L}^{-1}_{\mathbf{x}_0}(\cdot)\cap\mathcal X_\gamma\in\mathcal{H}_{\mathbf{0},\eta\mathcal{B}}(\alpha,\lambda)$. So \ref{item:ass_2} holds for $\alpha$ chosen as~\eqref{eq: choice of exponent}.
				\item 
				Since $\mathcal{L}^{-1}_{\mathbf{x}_0}(\mathbf 0)=\mathcal X_0$ is compact and $\mathcal{L}^{-1}_{\mathbf{x}_0}\in \mathcal{H}_{\mathbf{0}}(\alpha,\lambda)$, then by Definition~\ref{def:upper-hold}, there exist $\gamma>0$ such that for any $\mathbf{y}\in\eta\mathcal B$, $\mathcal{IL}_{\mathbf{x}_0,\gamma}(\mathbf{y})=\mathcal{L}^{-1}_{\mathbf{x}_0}(\mathbf y)\cap\mathcal X_\gamma\neq \emptyset$. The convexity follows from the fact that both sets of the intersection are convex. Therefore \ref{item:ass_3} holds.
				
				\item Since $\frac{\partial F}{\partial \mathbf{x}}(\mathbf p_0,\cdot)=R_0^2\mathbf{A}_0^{\tp}\mathbf{A}_0$ is constant, then \ref{item:ass_4} obviously holds. \red{}	 
			\end{enumerate}
		\end{proof}

		Combining Theorem~\ref{thm:holder-ge} and Lemma~\ref{lem:holder-nls-exponent}, we get one of our main result on the upper H\"olderian of the solution mapping $\mathcal{S}$ defined by~\eqref{eq: solution mapping} of  the problem~\ref{eq:ball-nls}. 			Given $\mathbf p_0=(\mathbf{A}_0,\mathbf{b}_0,R_0)\in \mathbb{U}$ and $\delta>0$, define 
		\begin{equation}\label{eq: local solution mapping}
			\mathcal{S}_{\delta}(\mathbf p):=(\mathcal{S}(\mathbf p_0)+\delta\mathcal{B})\cap \mathcal{S}(\mathbf p)\ \quad \forall \mathbf p\in \mathbb{U},
		\end{equation}
		where $\mathcal{S}$ is defined as~\eqref{eq: solution mapping}.

		\begin{theorem}[Upper H\"olderian of Ball-LS]\label{thm:upper-holder-nls}
			For any $\mathbf p_0:=(\mathbf{A}_0,\mathbf{b}_0,R_0)\in\mathbb{U}$,
			there exist $\delta>0$, $\bar\lambda>0$ and a neighborhood $\mathcal{W}$ of $\mathbf p_0$ such that $\mathcal{S}_\delta(\mathbf p)\neq \emptyset$ for all $\mathbf p\in\mathcal W$, and  
			\begin{equation*}\label{eq: locally upper semicontinuity of solution mapping of the core problem}
				\mathcal{S}_{\delta}\in\mathcal{H}_{\mathbf p_0,\mathcal{W}}(\alpha, \bar\lambda),
			\end{equation*}
			where the exponent $\alpha$ can be chosen as follows
			\begin{equation}\label{eq: Holder exponent}
				\alpha=\begin{cases}
					1&R_0=0 \text{ or }\Delta(\mathbf p_0)>0,\\
					\frac{1}{2}& R_0> 0 \text{ and } \Delta(\mathbf p_0)<0,\\
					\frac{1}{3}& R_0> 0 \text{ and } \Delta(\mathbf p_0)=0.
				\end{cases}
			\end{equation}
		\end{theorem}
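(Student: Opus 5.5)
The plan is to split according to whether $R_0>0$ or $R_0=0$. The main case $R_0>0$ is handled by Theorem~\ref{thm:holder-ge} together with Lemma~\ref{lem:holder-nls-exponent} and the scaling \eqref{eq:opt-val-solu-scal}. The degenerate case $R_0=0$ is handled directly.

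\textbf{Case $R_0>0$.} Choose a neighborhood $\Xi$ of $\mathbf p_0$ inside $\mathbb{U}_0$ (possible since $R_0>0$), with $\Omega=\mathcal K=\mathbb{R}^n$ and $\mathcal T=N_{\mathcal B}$. The normal cone map $N_{\mathcal B}$ is outer semi-continuous, because its graph is closed. The map $F(\mathbf p,\mathbf x)=R\mathbf A^{\tp}(R\mathbf A\mathbf x-\mathbf b)$ is polynomial, so $F$ and $F_2=R^2\mathbf A^{\tp}\mathbf A$ are continuous.

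Lemma~\ref{lem:holder-nls-exponent} supplies \ref{item:ass_1}--\ref{item:ass_4}, with $\alpha$ as in \eqref{eq: choice of exponent}. One needs $\lambda>1$, which costs nothing since the modulus may be enlarged. Assumption \ref{item:ass_4} is trivial because $F_2(\mathbf p_0,\cdot)$ is constant.

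Theorem~\ref{thm:holder-ge} then gives $\bar\delta$ and, for $\delta\le\bar\delta$, a neighborhood $\mathcal W$ on which $\mathcal A_\delta(\mathbf p)=\tilde{\mathcal S}(\mathbf p)\cap\mathcal X_\delta$ is nonempty, with $\mathcal A_\delta(\mathbf p_0)=\mathcal X_0$. For $\mu_\delta$, Remark~\ref{rmk:inverse_mapping_theorem_Holder_case}(4) applies: $F$ is $C^1$ in $\mathbf p$ and $\mathcal X_\delta$ is compact, so $\mu_\delta$ is locally upper Lipschitz, i.e.\ $\beta=1$. Hence $\mathcal A_\delta\in\mathcal H_{\mathbf p_0}(\alpha,M)$.

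It remains to transfer this back to $\mathcal S_\delta$ via $\mathcal S(\mathbf p)=R\tilde{\mathcal S}(\mathbf p)$. Take $\mathbf y\in\mathcal S(\mathbf p)\cap(\mathcal S(\mathbf p_0)+\delta'\mathcal B)$. Then $\mathbf x=\mathbf y/R$ satisfies
\[
\operatorname{dist}(\mathbf x,\mathcal X_0)\le \frac{\delta'}{R}+\frac{|R-R_0|}{R\,R_0}\max_{\mathcal S(\mathbf p_0)}\norm{\cdot},
\]
which is at most $\delta$ for $\delta'$ small and $\mathbf p$ close to $\mathbf p_0$. So $\mathbf x\in\mathcal A_\delta(\mathbf p)$ and $\operatorname{dist}(\mathbf x,\mathcal X_0)\le M\norm{\mathbf p-\mathbf p_0}^\alpha$. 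Scaling back,
\[
\operatorname{dist}\bigl(\mathbf y,\mathcal S(\mathbf p_0)\bigr)\le R\,\operatorname{dist}(\mathbf x,\mathcal X_0)+|R-R_0|\,\norm{\mathbf x}\le \bar\lambda\norm{\mathbf p-\mathbf p_0}^\alpha,
\]
using $\alpha\le1$ and $\norm{\mathbf p-\mathbf p_0}\le1$.

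Nonemptiness of $\mathcal S_{\delta'}$ follows the same way: pick $\mathbf x\in\mathcal A_{\delta''}(\mathbf p)$ for small $\delta''$. The $\mathcal A$-Hölder bound gives $\operatorname{dist}(\mathbf x,\mathcal X_0)\le M\norm{\mathbf p-\mathbf p_0}^\alpha$. After shrinking $\mathcal W$, the point $R\mathbf x$ lies within $\delta'$ of $\mathcal S(\mathbf p_0)=R_0\mathcal X_0$. Finally, rename $\delta'$ as $\delta$.

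\textbf{Case $R_0=0$.} Here $\mathcal S(\mathbf p_0)=\{\mathbf 0\}$, and every $\mathbf y\in\mathcal S(\mathbf p)$ has $\norm{\mathbf y}\le R=|R-R_0|\le\norm{\mathbf p-\mathbf p_0}$. So $\mathcal S_\delta\in\mathcal H_{\mathbf p_0,\mathcal W}(1,1)$ for any $\delta$, with $\mathcal W$ chosen so that $R<\delta$, which guarantees nonemptiness. Points with $R=0$ in $\mathcal W$ are covered trivially, since then $\mathcal S(\mathbf p)=\{\mathbf 0\}$.

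\textbf{Main obstacle.} The delicate step is the bookkeeping between the localization $\mathcal X_\delta$ around the rescaled set and the localization $\mathcal S(\mathbf p_0)+\delta\mathcal B$ in the original variables. The radius $\delta$ in the statement must be small relative to $\bar\delta R_0$, and $\mathcal W$ must be shrunk so that $|R-R_0|$ is small. The Hölder exponent itself carries through unchanged because $\alpha\le 1$, so the Lipschitz error from rescaling is dominated.
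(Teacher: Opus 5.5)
Your proposal is correct and follows the paper's proof: the $R_0=0$ case via $\mathcal S(\mathbf p)\subseteq R\mathcal B$, and for $R_0>0$ Theorem~\ref{thm:holder-ge} combined with Lemma~\ref{lem:holder-nls-exponent} on the rescaled problem. There, local upper Lipschitz continuity of $\mu_\delta$ gives $\beta=1$, and $\alpha\le 1$ absorbs the $|R-R_0|$ rescaling error. Your explicit conversion between the localization $\mathcal S(\mathbf p_0)+\delta'\mathcal B$ in the original variables and $\mathcal X_\delta$ in the rescaled ones is more careful than the paper, which loosely identifies $\mathcal S_\delta(\mathbf p)$ with $R\tilde{\mathcal S}_\delta(\mathbf p)$.
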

		
		\begin{proof}{Proof}	
			When $R_0=0$, it is clear that $\mathcal{S}(\mathbf p_0)=\{\mathbf{0}\}$. Choose an arbitrary $\delta>0$ and $\mathcal{W}:=\{\mathbf p=(\mathbf{A},\mathbf{b},R)\in \mathbb{U}:|R-R_0|<\frac{\delta}{2}\}$, then by the definition of $\mathcal{S}$ (cf.\ \eqref{eq: solution mapping}) and Proposition~\ref{prop:optimal-value-solution-mapping}, for any $\mathbf p\in \mathcal{W}$, $\mathcal{S}_\delta(\mathbf p)=\delta\mathcal{B}\bigcap\mathcal{S}(\mathbf p)=\mathcal{S}(\mathbf p)\neq \emptyset$. On the other hand, for any $\mathbf p=(\mathbf{A},\mathbf{b},R)\in\mathcal{W}$, $\norm{\mathbf p-\mathbf p_0}=(\norm{\mathbf{A}-\mathbf{A}_0}^2+\norm{\mathbf{b}-\mathbf{b}_0}^2+R^2)^{\frac{1}{2}}\geq R$. And thus by Proposition~\ref{prop:optimal-value-solution-mapping}, 
			\begin{equation*}
				\mathcal{S}_\delta(\mathbf p)=\mathcal{S}(\mathbf p)\subseteq R\mathcal{B}\subseteq \norm{\mathbf p-\mathbf p_0}\mathcal{B}=\mathcal{S}(\mathbf p_0)+ \norm{\mathbf p-\mathbf p_0}\mathcal{B}.
			\end{equation*} Therefore, the assertion holds.

			In the following, we assume that $R_0>0$. 
			The rest conclusions follow from an application of Theorem~\ref{thm:holder-ge} to \eqref{eq:opt-cond-scal}. Therefore hereinafter, we will examine the corresponding hypotheses in Theorem~\ref{thm:holder-ge}. 
			
			The assertion that $F$ is continuously differentiable is obvious. Since $\mathcal{B}$ is closed and convex, then the normal cone mapping $N_{\mathcal{B}}$ is outer semi-continuous on $\mathcal{B}$ \cite[Proposition~6.6]{rockafellar2009variational}.

			By Theorem~\ref{thm:holder-ge} and Lemma~\ref{lem:holder-nls-exponent}, there exists $\tilde{\delta},\lambda>0$,  such that for any $\delta<\tilde{\delta}$, there exists  a neighborhood $\mathcal{W}_{\delta}$ of $\mathbf p_0$ such that for any $\mathbf p=(\mathbf{A},\mathbf{b},R)\in\mathcal{W}_\delta$, we have $R\neq 0$, 
			\begin{equation}\label{eq:set-nonempty-scal}
				\tilde{\mathcal{S}}_\delta(\mathbf p):=\tilde{\mathcal S}(\mathbf p)\cap (\tilde{S}(\mathbf{p}_0)+\mathcal X_\delta)\neq \emptyset,\ \text{and }
				\tilde{\mathcal{S}}_\delta(\mathbf p)\subseteq \tilde{\mathcal{S}}_\delta(\mathbf p_0)+\lambda\mu_{\delta}(\mathbf p)^{\alpha}\mathcal{B},
			\end{equation}
			since $\mu_0(\mathbf p)\leq\mu_\delta(\mathbf p)$. Here $\alpha$ is given by \eqref{eq: Holder exponent}, and 
			\[
			\mu_{\delta}(\mathbf p):= 
			\max_{\mathbf x\in \mathcal{X}_\delta}\|\Psi(\mathbf p;\mathbf{x})-\Psi(\mathbf p_0;\mathbf{x})\|,
			\]
			where $\mathcal X_{\delta}=\mathcal X_0+\delta\mathcal B$ and $\mathcal X_0$ is given as \eqref{eq:set-x0} and $\Psi: (\mathbf p;\mathbf{x})=(\mathbf{A},\mathbf{b},R;\mathbf{x})\mapsto R\mathbf{A}^{\tp}(R\mathbf{A}\mathbf{x}-\mathbf{b})$. 
			
			Let $L>\max\{R_0,\frac{1}{3}\}$ be a constant. For any $\delta<\tilde{\delta}$, let $\delta'=\frac{1}{3L}\delta<\tilde{\delta}$ and $\mathcal{W}\subseteq \mathcal{W}_{\delta'}$ a neighborhood of $\mathbf p_0$ such that $\norm{\mathbf p-\mathbf p_0}<\min\{\delta,1,\frac{\delta}{3L}\}$ and $R<L$ for any $\mathbf p=(\mathbf{A},\mathbf{b},R)\in\mathcal{W}$. 
			Since the mapping $\Psi$ is smooth with respect to $\mathbf p$, then by $(3)$ in Remark~\ref{rmk:inverse_mapping_theorem_Holder_case}, the map $\mathbf p\mapsto \mu_{\delta}(\mathbf p)$ is locally upper Lipschitz continuous at $\mathbf p_0$.
			Thus, as $\mu_\delta(\mathbf p_0)=0$, we have that for any $\mathbf p\in\mathcal{W}$,
			\[
			\tilde{\mathcal{S}}_\delta(\mathbf p)\subseteq \tilde{\mathcal{S}}_\delta(\mathbf p_0)+\tilde\lambda\norm{\mathbf p-\mathbf p_0}^{\alpha}\mathcal{B}
			\]
			for some $\tilde\lambda>0$.

			Given $\mathbf{p}=(\mathbf{A},\mathbf{b},R)\in\mathcal{W}$, by \eqref{eq:set-nonempty-scal}, we know that $\tilde{S}_{\delta'}(\mathbf{p})\neq \emptyset$. By the definition of $\tilde{S}_{\delta'}$, there exists $\tilde{\mathbf{x}}\in \tilde{S}(\mathbf{p})$ and $\tilde{\mathbf{x}}_0\in\tilde{S}(\mathbf{p}_0)$, such that $\norm{\tilde{\mathbf{x}}-\tilde{\mathbf{x}}_0}\leq \delta'=\frac{\delta}{3L}$. By~\eqref{eq:opt-val-solu-scal}, we get that $\mathbf{x}:=R\tilde{\mathbf{x}}\in\mathcal{S}(\mathbf{p})$ and $\mathbf{x}_0:=R_0\tilde{\mathbf{x}}_0\in\mathcal{S}(\mathbf{p}_0)$. In addition, by the choice of $L,\delta$ and $\mathcal{W}$ and note that $\tilde{\mathcal{S}}(\mathbf{p})\subseteq\mathcal{B}$, we get that
			\begin{equation*} 
				\norm{\mathbf{x}-\mathbf{x}_0}=\norm{R\mathbf{x}-R_0\mathbf{x}_0}\leq R\norm{\tilde{\mathbf{x}}-\tilde{\mathbf{x}}_0}+|R-R_0|\norm{\mathbf{x}_0} 
				\leq L\cdot \frac{\delta}{3L}+ \frac{\delta}{3L}\cdot L<\delta. 
			\end{equation*}
			Therefore we get that $\mathbf{x}\in \mathcal{S}(\mathbf{p}_0)+\delta\mathcal{B}$. So $\mathbf{x}\in\mathcal{S}(\mathbf{p})\cap (\mathcal{S}(\mathbf{p}_0)+\delta\mathcal{B})= \mathcal{S}_\delta(\mathbf{p})$. Thus $\mathcal{S}_{\delta}(\mathbf{p})\neq \emptyset$.
			
			Moreover, we have 
			\begin{equation*}
				\begin{aligned}
					\mathcal{S}_{\delta}(\mathbf p)&=R\tilde{\mathcal{S}}_{\delta}(\mathbf p)\subseteq R\tilde{\mathcal{S}}_\delta(\mathbf p_0)+ R\tilde\lambda\norm{\mathbf p-\mathbf p_0}^{\alpha}\mathcal{B}\\
					&\subseteq R_0\tilde{\mathcal{S}}_\delta(\mathbf p_0)+(R-R_0)\tilde{\mathcal{S}}_\delta(\mathbf p_0)+ L\tilde\lambda\norm{\mathbf p-\mathbf p_0}^{\alpha}\mathcal{B}\\
					&\subseteq R_0\tilde{\mathcal{S}}_\delta(\mathbf p_0)+\norm{\mathbf p-\mathbf p_0}\mathcal{B}+L\tilde\lambda\norm{\mathbf p-\mathbf p_0}^{\alpha}\mathcal{B}\\
					&\subseteq \mathcal{S}_\delta (\mathbf p_0)+\bar\lambda\norm{\mathbf p-\mathbf p_0}^{\alpha}\mathcal{B},
				\end{aligned}
			\end{equation*}
			where $\bar\lambda:=L\tilde{\lambda}+1$, and the last inclusion follows from the fact that $\alpha\leq 1$ in~\eqref{eq: Holder exponent}. Consequently, the assertion holds.
		\end{proof}
		\subsection{Separable NLS}\label{sec:snls}
		In this subsection, we present an application of Theorem~\ref{thm:upper-holder-nls} for separable nonlinear least squares (NLS) problems with ball constraint:
		\begin{equation}\label{eq:gen_ball_nls}
			\begin{array}{rl}
				\min_{\mathbf{x}}&\frac{1}{2}\norm{G(\mathbf{y})\mathbf{x}-\mathbf{b}}^2\\
				\text{s.t. }& \norm{\mathbf x}^2\leq R^2,
			\end{array}
		\end{equation}
		where $G:\Omega\subseteq\mathbb{R}^l\longrightarrow \mathbb{R}^{m\times n}$ is a matrix-valued mapping. Define $\tilde{G}:\Omega\times \mathbb{R}^m\times \mathbb{R}_{++} \longrightarrow \mathbb{U}$ as $\tilde{G}(\mathbf{y},\mathbf{b},R)=(G(\mathbf{y}),\mathbf{b},R)$.  
		Separable NLS problems are extensively studied in \cite{golub1973differentiation} and references herein. 
		
		Define the solution mapping of \eqref{eq:gen_ball_nls} as 
		\begin{equation}\label{eq:sol_map_gen_nls}
			\begin{array}{rl}
				\mathcal{S}^G:\Omega\times \mathbb{R}^m\times \mathbb{R}_{++}&\longrightarrow \mathbb{R}^n,\\
				(\mathbf{y},\mathbf{b},R)&\mapsto \argmin_{\norm{\mathbf x}^2\leq R^2}\frac{1}{2}\norm{G(\mathbf{y})\mathbf{x}-\mathbf{b}}^2.
			\end{array}
		\end{equation}
		Numerical methods for \eqref{eq:gen_ball_nls} usually try to eliminate $\mathbf x$ first ($\mathbf x$-pharse) and then solve the result problem for $\mathbf y$ ($\mathbf y$-pharse). In this scheme, properties on the solution mapping $\mathcal{S}^G$ play crucial roles in algorithmic designs for the $\mathbf y$-pharse. 
		
		Given $\xi_0=(\mathbf{y}_0,\mathbf{b}_0,R_0)\in \Omega\times \mathbb{R}^m\times \mathbb{R}_{++}$ and $\delta>0$, define 
		\begin{equation*}
			\mathcal{S}_{\delta}^G(\xi):=(\mathcal{S}^G(\xi_0)+\delta\mathcal{B})\cap \mathcal{S}^G(\xi),\quad \forall \xi\in \Omega\times \mathbb{R}^m\times \mathbb{R}_{++}.
		\end{equation*}
		Clearly, 
		\begin{equation}\label{eq:relation_of_nls_and_gen_nls}
			\mathcal{S}^G=\mathcal{S}\circ \tilde{G},\ \text{and } \mathcal{S}^G_\delta=\mathcal{S}_\delta\circ \tilde{G},
		\end{equation}
		where $\mathcal{S}$ and $\mathcal{S}_\delta$ are defined as \eqref{eq: solution mapping} and \eqref{eq: local solution mapping}, respectively.
		\begin{proposition}\label{prop:Holder_for_gen_nls}
			Let separable NLS~\eqref{eq:gen_ball_nls} be defined as above. For any $\xi_0:=(\mathbf{y}_0,\mathbf{b}_0,R_0)\in \Omega\times \mathbb{R}^m\times \mathbb{R}_{++}$, suppose $G$ is locally upper H\"older continuous at $\mathbf{y}_0$ with exponent $\beta\in(0,1]$, then there exists $\delta>0$, $\lambda>0$ and a neighborhood $\mathcal{N}$ of $\xi_0$ relative to  $\Omega\times \mathbb{R}^m\times \mathbb{R}_{++}$, such that $\mathcal{S}^G_\delta(\xi)\neq \emptyset$ for any $\xi\in \mathcal{N}$, and 
			\begin{equation*}
				\mathcal{S}^G_\delta\in \mathcal{H}_{\xi_0,\mathcal{N}}(\alpha\beta,\lambda),
			\end{equation*}
			where the exponent $\alpha$ is given as \eqref{eq: Holder exponent} with $\mathbf p_0=\tilde{G}(\xi_0)$.
		\end{proposition}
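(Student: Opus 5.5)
The plan is to compose Theorem~\ref{thm:upper-holder-nls} with the H\"older continuity of $\tilde G$, using the factorization \eqref{eq:relation_of_nls_and_gen_nls}.

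\textbf{Step 1: apply the Ball-LS result.} Set $\mathbf p_0:=\tilde G(\xi_0)=(G(\mathbf y_0),\mathbf b_0,R_0)\in\mathbb U$. Note $R_0>0$, so $\alpha$ is read off from the sign of $\Delta(\mathbf p_0)$ in \eqref{eq: Holder exponent}. Theorem~\ref{thm:upper-holder-nls} gives $\delta>0$, $\bar\lambda>0$ and a neighborhood $\mathcal W$ of $\mathbf p_0$ in $\mathbb U$ with two properties:
\begin{itemize}
\item[(i)] $\mathcal S_\delta(\mathbf p)\neq\emptyset$ for all $\mathbf p\in\mathcal W$;
\item[(ii)] $\mathcal S_\delta(\mathbf p)\subseteq \mathcal S_\delta(\mathbf p_0)+\bar\lambda\norm{\mathbf p-\mathbf p_0}^\alpha\mathcal B$.
\end{itemize}
The sets $\mathcal S_\delta$ are built around $\mathcal S(\mathbf p_0)=\mathcal S^G(\xi_0)$, so $\mathcal S^G_\delta=\mathcal S_\delta\circ\tilde G$ holds with the same $\delta$.

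\textbf{Step 2: pull the neighborhood back through $\tilde G$.} Local upper H\"older continuity of the single-valued $G$ at $\mathbf y_0$ (Remark~\ref{rmk: Holder continuity}(2)) gives $\nu>0$ and a neighborhood $\mathcal N_0$ of $\mathbf y_0$ in $\Omega$ with
\[
\norm{G(\mathbf y)-G(\mathbf y_0)}\leq \nu\norm{\mathbf y-\mathbf y_0}^\beta \quad\text{for all } \mathbf y\in\mathcal N_0 .
\]
In particular $G$ is continuous at $\mathbf y_0$. Hence we can choose a neighborhood $\mathcal N\subseteq \mathcal N_0\times\mathbb R^m\times\mathbb R_{++}$ of $\xi_0$ such that $\tilde G(\mathcal N)\subseteq\mathcal W$. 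Shrinking further, we may assume $\norm{\xi-\xi_0}\leq 1$ on $\mathcal N$. Nonemptiness of $\mathcal S^G_\delta(\xi)=\mathcal S_\delta(\tilde G(\xi))$ for $\xi\in\mathcal N$ then follows from (i).

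\textbf{Step 3: estimate $\norm{\tilde G(\xi)-\tilde G(\xi_0)}$.} For $\xi=(\mathbf y,\mathbf b,R)\in\mathcal N$,
\[
\norm{\tilde G(\xi)-\tilde G(\xi_0)}^2=\norm{G(\mathbf y)-G(\mathbf y_0)}^2+\norm{\mathbf b-\mathbf b_0}^2+|R-R_0|^2 .
\]
The first term is at most $\nu^2\norm{\xi-\xi_0}^{2\beta}$. Since $\beta\le1$ and $\norm{\xi-\xi_0}\le1$, the other two terms are at most $\norm{\xi-\xi_0}^2\le\norm{\xi-\xi_0}^{2\beta}$. Therefore
\[
\norm{\tilde G(\xi)-\tilde G(\xi_0)}\leq (\nu^2+1)^{1/2}\norm{\xi-\xi_0}^\beta .
\]
Substituting into (ii) yields
\[
\mathcal S^G_\delta(\xi)\subseteq \mathcal S^G_\delta(\xi_0)+\bar\lambda(\nu^2+1)^{\alpha/2}\norm{\xi-\xi_0}^{\alpha\beta}\mathcal B,
\]
which is the claim with $\lambda:=\bar\lambda(\nu^2+1)^{\alpha/2}$.

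\textbf{Main obstacle.} The argument is a routine composition, so the only delicate point is the bookkeeping in Step 3. The exponent $\beta$ on the $\mathbf y$-component has to dominate the Lipschitz dependence on $(\mathbf b,R)$. This is exactly why the restriction $\beta\le 1$ and the localization $\norm{\xi-\xi_0}\le 1$ are needed.
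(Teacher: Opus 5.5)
Your proposal is correct and follows essentially the same route as the paper: apply Theorem~\ref{thm:upper-holder-nls} at $\mathbf p_0=\tilde G(\xi_0)$, pull the neighborhood $\mathcal W$ back through $\tilde G$, and bound $\norm{\tilde G(\xi)-\tilde G(\xi_0)}$ by a constant times $\norm{\xi-\xi_0}^\beta$ using $\beta\le 1$. Your Euclidean bookkeeping is slightly more careful than the paper's. You state the localization $\norm{\xi-\xi_0}\le 1$ explicitly and give the modulus as $\bar\lambda(\nu^2+1)^{\alpha/2}$, whereas the paper uses the triangle-inequality bound $3L\norm{\xi-\xi_0}^\beta$ and leaves that localization implicit.
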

		\begin{proof}{Proof}
			Denote $\mathbf p_0=\tilde{G}(\xi_0)\in\mathbb{U}$.
			By Theorem~\ref{thm:upper-holder-nls}, there exists $\delta>0$, $\bar\lambda>0$ and a neighborhood $\mathcal{W}$ of $\mathbf p_0$ such that $\mathcal{S}_\delta(\mathbf p)\neq \emptyset$ for any $\mathbf p\in\mathcal{W}$ and  
			\begin{equation*}
				\mathcal{S}_{\delta}\in\mathcal{H}_{\mathbf p_0,\mathcal{W}}(\alpha, \bar\lambda),
			\end{equation*}
			where the exponent $\alpha$ is given as \eqref{eq: Holder exponent}. Thus, for any $\mathbf p\in \mathcal{W}$, it holds
			\begin{equation}\label{eq:def_of_Hold_for_S}
				\mathcal{S}_{\delta}(\mathbf p)\subseteq \mathcal{S}_\delta(\mathbf p_0)+\bar\lambda\norm{\mathbf p-\mathbf p_0}^\alpha\mathcal{B}.
			\end{equation}
			
			Since $G$ is locally upper H\"older continuous at $\mathbf{y}_0$ with exponent $\beta>0$, then by the definition of product topology, there exist a neighborhood $\mathcal{N}$ of $\xi_0$ relative to $\Omega\times \mathbb{R}^m\times \mathbb{R}_{++}$ and \red{$L>1$} such that $\tilde{G}(\mathcal{N})\subset \mathcal{W}$ and for any $\xi=(\mathbf{y},\mathbf{b},R)\in\mathcal{N}$ it holds
			\begin{equation}\label{eq:Holder_for_par_map}
				\begin{aligned}
					\norm{\tilde{G}(\xi)-\tilde{G}(\xi_0)}&=\norm{(G(\mathbf{y})-G(\mathbf{y}_0),\mathbf b-\mathbf b_0,R-R_0)}\\&\leq \norm{\mathbf b-\mathbf b_0}+|R-R_0|+L\norm{\mathbf{y}-\mathbf{y}_0}^\beta\\&\leq 3L\norm{\xi-\xi_0}^\beta,
				\end{aligned}
			\end{equation}
			where the last inequality follows from $\beta \in (0,1]$.
			These, together with \eqref{eq:relation_of_nls_and_gen_nls}, imply that $\mathcal{S}^G_\delta(\xi)\neq \emptyset$ for any $\xi\in \mathcal{N}$. 
			Moreover, for any $\xi=(\mathbf{y},\mathbf{b},R)\in\mathcal{N}$, \eqref{eq:Holder_for_par_map}, together with \eqref{eq:def_of_Hold_for_S} and \eqref{eq:relation_of_nls_and_gen_nls}, implies that 
			\begin{equation*}
				\mathcal{S}^G_{\delta}(\xi)=\mathcal{S}_\delta\circ\tilde{G}(\xi)\subseteq \mathcal{S}_\delta(\tilde{G}(\xi_0))+\bar\lambda\norm{\tilde{G}(\xi)-\tilde{G}(\xi_0)}^\alpha\mathcal{B}\subseteq \mathcal{S}^G_{\delta}(\xi_0)+3L\bar\lambda\norm{\xi-\xi_0}^{\alpha\beta}\mathcal{B}.
			\end{equation*}
			Let $\lambda=3L\bar\lambda$, the assertion follows.
			
		\end{proof} 
		
		Proposition~\ref{prop:Holder_for_gen_nls} gives an upper H\"olderian for the separable NLS. To solve the separable NLS \eqref{eq:gen_ball_nls}, a selection of the set-valued solution mapping with favorable properties is desireable. It sheds some light on solving separable NLS when $G$ fails to have local constant rank. Those discussions will be carried out in our upcoming  work \cite{HuWang2026variantionalgeometryofoptimizations}. 
		
		\section{Conclusion}\label{sec:con}  
		In this paper, we analyze the upper H\"olderian of the  solution mapping of parameteric generalized equations and its applications to that for $\mathcal{S}$~\eqref{eq: solution mapping} of the Ball-LS problem~\ref{eq:ball-nls}. 
		
		We first present an extension of Robinson's generalized implicit function theorem from the upper Lipschitzian case  (cf.\  \cite{Robinson1979_GenEqI}) to the upper H\"olderian case (cf.\  Theorem~\ref{thm:holder-ge}). 
		It is worth mentioning that given the widespread impact of  Robinson's generalized implicit function theorem for generalized equations under upper Lipschitz continuity, the extension to the case under upper H\"older continuity assumption may be applied for complicated polynomial systems for which only H\"older continuous can be expected \cite{mordukhovich2014full,gfrerer16lipschitz}. Then, based on this generalization, we further provide explicit exponents for the upper H\"olderian of the solution mapping (cf.\  Theorem~\ref{thm:upper-holder-nls}) for linear least squares problem with ball constraint. It has potential applications in convergence rate analysis for related numerical schemes.

		A next study is on extensions to general quadratic constrained quadratic programs (QCQP) and optimization problems on classical manifolds. These problems appear very often as subproblems or sequential approaches for more complicated models, for which variational geometry on the optimal value mapping and the optimal solution mapping are critical for both algorithmic design and convergence analysis. In particular, when standard regularization conditons fail, the corresponding landscape is less known \cite{rockafellar2009variational,Rockafellar2014implicitfunctionsandsolutionmapping,Shapiro2000perturbationanalysis}.

		%


		\subsection*{Acknowledgement}
		
		This work is partially supported by the Natural Science Foundation of Hunan Province of China (Grant No. 2025JJ20006), the National Science Foundation of China (Grant No. 12571334), and the Innovation Research Foundation of National University of Defense Technology.

		
		\bibliographystyle{plain}
		\bibliography{ref}

@book{stewart1990matrix,
  title={Matrix Perturbation Theory},
  author={Stewart, Gilbert W and Sun, Ji-Guang},
  publisher = {Academic Press}, 
  address={New York},
  year={1990}
}

@book{apostol1974mathematical,
  author    = {Apostol, Tom M.},
  title     = {Mathematical Analysis},
  publisher = {Addison-Wesley},
  year      = {1974},
  address   = {Reading, Massachusetts}
}

@book{bauschke2011convex,
  title={Convex Analysis and Monotone Operator Theory in Hilbert Spaces},
  author={Bauschke, Heinz H. and Combettes, Patrick L.},
  series={CMS Books in Mathematics},
  year={2011},
  publisher={Springer},
  address={New York}
}

@book{bhatia2013matrix,
  title     = {Matrix Analysis},
  author    = {Bhatia, Rajendra},
  series    = {Graduate Texts in Mathematics},
  volume    = {169},
  publisher = {Springer},
  year      = {1997},
  address   = {New York},
  pages     = {xi, 347},
  mrclass   = {15-01 (47-01)},
  mrnumber  = {1478117},
  zbmath    = {0863.15001}
}

@book{clarke1990optimization,
  title={Optimization and Nonsmooth Analysis},
  author={Clarke, Frank H},
  year={1990},
  publisher={Society for Industrial and Applied Mathematics},
  address={New York}
}

@book{Demmel1997appliednumericallnearalgebra,
author = {Demmel, James W.},
title = {Applied Numerical Linear Algebra},
publisher = {Society for Industrial and Applied Mathematics},
year = {1997},
address = {New York},
edition   = {},
}

@book{dhara2012optimality,
  title     = {Optimality Conditions in Convex Optimization: A Finite-Dimensional View},
  author    = {Dhara, Anulekha and Dutta, Joydeep},
  publisher = {CRC Press, Taylor \& Francis Group},
  address   = {Boca Raton, Florida},
  year      = {2012},
  mrclass   = {90C25, 49N15, 46N10},
  mrnumber  = {MR2931423}
}

@book{Rockafellar2014implicitfunctionsandsolutionmapping,
author = {Dontchev, Asen L. and Rockafellar, R. T. },
title = {Implicit Functions and Solution Mappings: A View from Variational Analysis},
publisher = {Springer, New York},
year = {2013},
address = {},
edition   = {},
}

@book{Fiorenza2016Holder,
  author    = {Fiorenza, Renato},
  title     = {H{\"o}lder and Locally H{\"o}lder Continuous Functions, and Open Sets of Class $C^k$, $C^{k,\lambda}$},
  series    = {Frontiers in Mathematics},
  publisher = {Springer},
  address   = {Cham},
  year      = 2016,
  pages     = {xi+152},
}

@book{krantz2013implicit,
  title={The Implicit Function Theorem: History, Theory, and Applications},
  author={Krantz, Steven G. and Parks, Harold R.},
  series={Birkh{\"a}user Basel},
  year={2013},
  publisher={Springer},
  address={New York},
}

@book{mordukhovich1996variationalI,
author = {Boris S. Mordukhovich},
title = {Variational Analysis and Generalized Differentiation I: Basic Theory},
publisher = {Springer Berlin, Heidelberg},
year = {2006},
address = {},
edition   = {}, 
}

@book{JRMunkres1991,
	AUTHOR = {James R. Munkres},
	TITLE = {Analysis on Manifolds},
	 publisher = {CRC Press, Taylor \& Francis Group},
  address   = {Boca Raton, Florida},
	YEAR = {1991}
}

@book{rao1995linear,
    author = {Rao, C. Radhakrishna and Toutenburg, Helge},
    title = {Linear Models: Least Squares and Alternatives},
    series = {Springer Series in Statistics},
    publisher = {Springer},
    address = {New York},
    year = {1995},
}

@book{rockafellar-1970a,
	AUTHOR = {Rockafellar, R. Tyrrell},
	TITLE = {Convex Analysis},
	PUBLISHER = {Princeton University Press, Princeton},
	YEAR = {1997},
	PAGES = {xviii+451},
	MRCLASS = {49-02 (26-02 46-02 58-02 90-02)},
	MRNUMBER = {1451876},
}

@book{rockafellar2009variational,
    title     = {Variational Analysis},
    author    = {Rockafellar, R Tyrrell and Wets, Roger J-B},
    series ={Grundlehren der mathematischen Wissenschaften},
    volume    = {317},
    year      = {2009},
    publisher = {Springer Science \& Business Media},
    address={Berlin , Heidelberg},
}

@book{rudin1976principles,
  author    = {Rudin, Walter},
  title     = {Principles of Mathematical Analysis},
  publisher = {McGraw-Hill},
  year      = {1976},
  address   = {New York},
}

@book{Shapiro2000perturbationanalysis,
  author    = {Bonnans, Joseph Fr{\'e}d{\'e}ric and Shapiro, Alexander},
  title     = {Perturbation Analysis of Optimization Problems},
  publisher = {Springer-Verlag},
  address   = {New York},
  year      = {2000},
  series    = {Springer Series in Operations Research and Financial Engineering}, 
}

@book {NY2018,
	AUTHOR = {Nesterov, Yurii},
	TITLE = {Lectures on Convex Optimization},
	PUBLISHER = {Springer, Cham},
	YEAR = {2018},
	PAGES = {xxiii+589},
	MRCLASS = {90-01 (90C25)},
	MRNUMBER = {3839649},
	MRREVIEWER = {Giorgio\ Giorgi},
}

@article{Shapiro1997,
  author  = {Shapiro, Alexander},
  title   = {First and second order analysis of nonlinear semidefinite programs},
  journal = {Mathematical Programming},
  year    = {1997},
  volume  = {77},
  number  = {1},
  pages   = {301--320},
  mrclass = {90C22},
  mrnumber = {1440711},
}

@article{Berk2023Lasso,
author = {Berk, Aaron and Brugiapaglia, Simone and Hoheisel, Tim},
title = {LASSO reloaded: a variational analysis perspective with applications to compressed sensing},
journal = {SIAM Journal on Mathematics of Data Science},
volume = {5},
number = {4},
pages = {1102-1129},
year = {2023}
}

@article{canovas2025lipschitz,
  title={Lipschitz stability in convex optimization},
  author={C{\'a}novas, MJ and Parra, Juan},
  journal={Set-valued and Variational Analysis},
  volume={33},
  number={3},
  pages={25},
  year={2025},
  publisher={Springer}
}

@article{clarke1976inverse,
  author  = {Clarke, Frank H.},
  title   = {On the inverse function theorem},
  journal = {Pacific Journal of Mathematics},
  volume  = {64},
  number  = {1},
  pages   = {97--102},
  year    = {1976}
}

@article{cui2024lipschitzstabilityleastsquaresproblems,
  author = {Cui, Ying and Hoheisel, Tim and Nghia, Tran T. A. and Sun, Defeng},
  title = {Lipschitz stability of least-squares problems regularized by functions with {C}$^2$-Cone reducible conjugates},
  journal = {Mathematics of Operations Research},
  year = {2026}, 
  pages={in press}
}

@article{dontchev1993lipschitzian,
  title={Lipschitzian stability in nonlinear control and optimization},
  author={Dontchev, Asen L and Hager, William W},
  journal={SIAM Journal on Control and Optimization},
  volume={31},
  number={3},
  pages={569--603},
  year={1993},
  publisher={Society for Industrial and Applied Mathematics}
}

@article{gfrerer16lipschitz,
  title={Lipschitz and {H}{\"o}lder stability of optimization problems and generalized equations},
  author={Gfrerer, Helmut and Klatte, Diethard},
  journal={Mathematical Programming},
  volume={158},
  number={1},
  pages={35--75},
  year={2016},
  publisher={Springer}
}

@article {he-26-tensor,
	AUTHOR = {He, Tiantian and Hu, Shenglong and Huang, Zheng-Hai},
	TITLE = {A Projected {Gauss-Newton} Variable Projection
Method for Low Rank Approximations of third order Tensors},
	JOURNAL = {Submitted Manuscript}, 
	YEAR = {2026}
}

@article{golub1973differentiation,
  title={The differentiation of pseudo-inverses and nonlinear least squares problems whose variables separate},
  author={Golub, Gene H and Pereyra, Victor},
  journal={SIAM Journal on Numerical Analysis},
  volume={10},
  number={2},
  pages={413--432},
  year={1973},
  publisher={Society for Industrial and Applied Mathematics}
}

@incollection{golub1976differentiation,
  title={Differentiation of pseudoinverses, separable nonlinear least square problems and other tales},
  author={Golub, GH and Pereysa, V},
  booktitle={Generalized Inverses and Applications},
  pages={303--324},
  year={1976},
  publisher={Elsevier}
}

@article{gulliksson2002perturbation,
  title={Perturbation bounds for constrained and weighted least squares problems},
  author={Gulliksson, M{\aa}rten and Jin, Xiao-Qing and Wei, Yi-Min},
  journal={Linear Algebra and its Applications},
  volume={349},
  number={1-3},
  pages={221--232},
  year={2002},
  publisher={Elsevier}
}

@article{izmailov2013note,
  title={A note on upper {L}ipschitz stability, error bounds, and critical multipliers for Lipschitz-continuous {KKT} systems},
  author={Izmailov, Alexey F and Kurennoy, Alexey S and Solodov, Mikhail V},
  journal={Mathematical Programming},
  volume={142},
  number={1},
  pages={591--604},
  year={2013},
  publisher={Springer}
}

@article{JiangL22Holder,
  author    = {Rujun Jiang and Xudong Li},
  title     = {H{\"{o}}lderian error bounds and {K}urdyka-{L}ojasiewicz inequality for the trust region subproblem},
  journal   = {Mathematics of Operations Research},
  volume    = {47},
  number    = {4},
  pages     = {3025--3050},
  year      = {2022}
}

@article{jittorntrum1978implicit,
  author = {Jittorntrum, K.},
  title = {An implicit function theorem},
  journal = {Journal of Optimization Theory and Applications},
  volume = {25},
  pages = {575--577},
  year = {1978}
}

@article{klatte2005strong,
  title={Strong {L}ipschitz stability of stationary solutions for nonlinear programs and variational inequalities},
  author={Klatte, Diethard and Kummer, Bernd},
  journal={SIAM Journal on Optimization},
  volume={16},
  number={1},
  pages={96--119},
  year={2005},
  publisher={Society for Industrial and Applied Mathematics}
}

@article{klatte2026lipschitz,
  title={Lipschitz stability for a class of parametric optimization problems with polyhedral feasible set mapping},
  author={Klatte, Diethard},
  journal={Computational Management Science},
  volume={23},
  number={1},
  pages={4},
  year={2026},
  publisher={Springer}
}

@article{kakutani1941generalization,
  title={A generalization of {B}rouwer's fixed point theorem},
  author={Kakutani, Shizuo},
  journal={Duke Mathematical Journal},
  volume={8},
  number={3},
  pages={457--459},
  year={1941},
  month={September},
  publisher={Duke University Press},
}

@article{McDonald1995,
  author = {McDonald, John},
  title = {Fiber polytopes and fractional power series},
  journal = {Journal of Pure and Applied Algebra},
  volume = {104},
  number = {2},
  pages = {213--233},
  year = {1995},
  mrclass = {52B11 (14M25 52B05)},
  mrnumber = {1354890},
  mrreviewer = {Jürgen Herzog},
  zbl = {0842.52009}
}

@techreport{kummer1989implicit,
  author = {Kummer, B.},
  title = {An Implicit-Function Theorem for ${C}^{0,1}$-Equations and Parametric ${C}^{1,1}$-Optimization},
  type = {Preprint},
  institution = {Sektion Mathematik, Humboldt-Universitat zu Berlin},
  address = {Berlin},
  year = {1989}
}

@article{mordukhovich2014full,
  title={Full {L}ipschitzian and {H}{\"o}lderian stability in optimization with applications to mathematical programming and optimal control},
  author={Mordukhovich, Boris S and Nghia, Tran TA},
  journal={SIAM Journal on Optimization},
  volume={24},
  number={3},
  pages={1344--1381},
  year={2014},
  publisher={Society for Industrial and Applied Mathematics}
}

@article{mordukhovich1994lipschitzian,
  title={Lipschitzian stability of constraint systems and generalized equations},
  author={Mordukhovich, Boris},
  journal={Nonlinear Analysis: Theory, Methods and Applications},
  volume={22},
  number={2},
  pages={173--206},
  year={1994},
  publisher={Elsevier}
}

@article{Nghia2025geometrycharacterizationslipschitzstability,
author = {Nghia, Tran T.A.},
title = {Geometric characterizations of {L}ipschitz stability for convex optimization problems},
journal = {SIAM Journal on Optimization},
volume = {35},
number = {2},
pages = {927-958},
year = {2025},
  publisher={Society for Industrial and Applied Mathematics}
}

@article{Wang01011994,
author = {Tao. Wang and Jong-Shi. Pang},
title = {Global error bounds for convex quadratic inequality systems},
journal = {Optimization},
volume = {31},
number = {1},
pages = {1--12},
year = {1994},
publisher = {Taylor and Francis}, 
}

@article{Robinson1979_GenEqI,
  author  = {Robinson, S. M.},
  title   = {Generalized equations and their solutions, Part I: Basic theory},
  journal = {Mathematical Programming Study},
  year    = {1979},
  volume  = {10},
  pages   = {128--141},
}

@article{Robinson1994_Implicit,
  author  = {Robinson, S. M.},
  title   = {An implicit-function theorem for a class of nonsmooth functions},
  journal = {Mathematics of Operations Research},
  year    = {1994},
  volume  = {19},
  number  = {2},
  pages   = {292--309},
}

@article{HuWang2026variantionalgeometryofoptimizations,
      title={Variational landscape of the {KKT} system of quadratic programming with ball constraints}, 
      author={Yu Wang and Shenglong Hu},
      year={2026},
      journal={In preparation},
      
}



		%
		%
		%
		%
		\appendix
			\section{Basics on Ball-LS Problems}\label{sec:basiclsp}
			
			In the appendix, we establish explicit formulae for the solution mapping $\mathcal{S}$ \eqref{eq: solution mapping} and the optimal mapping $\mathcal{V}$ \eqref{eq: optimal value mapping} of the ball constrained linear least squares problem \ref{eq:ball-nls}, recalled as 
			
			\begin{equation*} 
				\begin{array}{rl}
					\min_{\mathbf{x}}&  \frac{1}{2}\norm{\mathbf{Ax}-\mathbf{b}}^2\\
					\text{s.t.}& \norm{\mathbf{x}}^2\leq R^2.
				\end{array}
			\end{equation*}
			When $R=0$, the problem is trivial. 
			For the nontrivial case when $R\neq 0$, problem~\ref{eq:ball-nls} is a convex problem satisfying the Slater condition. Consequently, strong duality holds and the global optimality is characterized by the KKT system both necessarily and sufficiently (\cite{rockafellar-1970a}). 
			
			Denote the the optimal value mapping (\cite[Section~4.1]{Shapiro2000perturbationanalysis}) of the problem~\eqref{eq:ball-nls} as:
			\begin{equation}\label{eq: optimal value mapping}
				\begin{array}{rl}
					\mathcal{V} : \mathbb{R}^{m\times n}\times \mathbb{R}^{m}\times \mathbb{R}_{+}&\longrightarrow \mathbb{R}\\
					(\mathbf{A},\mathbf{b},R)&\longmapsto \min_{\mathbf{x}\in \mathcal{B}(R)} \frac{1}{2}\norm{\mathbf{Ax}-\mathbf{b}}^2.
				\end{array}
			\end{equation}
			
			Define 
			\begin{equation}\label{eq: the Lagrange dual function}
				h(\lambda;\mathbf{A},\mathbf{b},R):=-\frac{1}{2}\mathbf{b}^{\tp} \mathbf{A} (\mathbf{A}^{\tp} \mathbf{A} + 2\lambda \mathbf{I}_n)^{\dag} \mathbf{A}^{\tp} \mathbf{b} + \frac{1}{2}\mathbf{b}^{\tp}\mathbf{b} - \lambda R^2. 
			\end{equation}
			
			\begin{proposition}[Optimal Value and Solution Mappings]\label{prop:optimal-value-solution-mapping}
				Let $\mathbf{A}\in\mathbb{R}^{m\times n},\mathbf{b}\in\mathbb{R}^m$ and $R\geq 0$. Suppose that $\mathbf{A}=\mathbf{U}\begin{bmatrix}\mathbf{\Sigma}_0&0\\0&0
				\end{bmatrix}\mathbf{V}^{\tp}$ is a singular value decomposition of $\mathbf{A}$ with $\mathbf{\Sigma}_0:=\diag(\sigma_1,\dots,\sigma_r)\in\mathbb{R}^{r\times r}$ and $\sigma_1\geq \sigma_2\geq \cdots\geq \sigma_r>0$. Denote  $\mathbf{c}:=\mathbf{U}^{\tp} \mathbf{b}=(c_1,\dots,c_m)^{\tp}$ and 
				\begin{equation}\label{eq:criteria-delta}
					\Delta(\mathbf{A},\mathbf{b},R):=\sum_{k=1}^r \frac{c_k^2}{\sigma_k^2}-R^2.
				\end{equation}
				Then 
				the optimal value mapping of problem \ref{eq:ball-nls} is given by 
				\begin{equation}\label{eq: svd form of the optimal value mapping}
					\mathcal{V}(\mathbf{A},\mathbf{b},R)=\sup_{\lambda\geq 0}h(\lambda;\mathbf{A},\mathbf{b},R)	=\begin{cases}
						h(0;\mathbf{A},\mathbf{b},R)=\frac{1}{2}\sum_{i=r+1}^m c_i^2,&\Delta(\mathbf{A},\mathbf{b},R)\leq 0\\
						h(\lambda_0;\mathbf{A},\mathbf{b},R),&\Delta(\mathbf{A},\mathbf{b},R)> 0,
					\end{cases}
				\end{equation}
				where
				$\lambda_0=\lambda_0(\mathbf{A},\mathbf{b},R)\in\mathbb R_{++}\cup\{+\infty\}$ satisfies the first order condition 
				\begin{equation}\label{eq: eq:opt-lambda 1}
					\frac{\partial h}{\partial\lambda}(\lambda_0;\mathbf{A},\mathbf{b},R)=\sum_{k=1}^r \frac{\sigma_k^2c_k^2}{(\sigma_k^2+2\lambda_0)^2}-R^2=0.
				\end{equation}
				Moreover, the solution mapping of problem~\ref{eq:ball-nls} is given by 
				\begin{equation}\label{eq: exact form for the solution mapping}
					\begin{aligned}
						\mathcal{S}(\mathbf{A},\mathbf{b},R)=\begin{cases}
							((\mathbf{A}^{\tp} \mathbf{A})^{\dag}\mathbf{A}^{\tp} \mathbf{b}+\ker(\mathbf{A}))\bigcap \mathcal{B}(R),&\Delta(\mathbf{A},\mathbf{b},R) \leq 0,\\
							\{(\mathbf{A}^{\tp} \mathbf{A}+2\lambda_0(\mathbf{A},\mathbf{b},R) \mathbf{I}_n)^{-1}\mathbf{A}^{\tp} \mathbf{b}\},&\Delta(\mathbf{A},\mathbf{b},R) > 0.
						\end{cases}
					\end{aligned}
				\end{equation}
			\end{proposition}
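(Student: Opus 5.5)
The plan is to prove Proposition~\ref{prop:optimal-value-solution-mapping} by Lagrangian duality and the singular value decomposition. The case $R=0$ is trivial: $\mathcal{B}(0)=\{\mathbf 0\}$ and the formula reduces consistently. So assume $R>0$. Then \ref{eq:ball-nls} is convex and Slater's condition holds, so strong duality holds and the KKT conditions are necessary and sufficient.

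\textbf{Dual function.} Form the Lagrangian
\[
\ell(\mathbf x,\lambda)=\tfrac12\norm{\mathbf{Ax}-\mathbf b}^2+\lambda(\norm{\mathbf x}^2-R^2), \qquad \lambda\ge 0.
\]
For $\lambda>0$ it is strictly convex in $\mathbf x$, with minimizer $\mathbf x(\lambda)=(\mathbf A^\tp\mathbf A+2\lambda\mathbf I_n)^{-1}\mathbf A^\tp\mathbf b$. Substituting gives $h(\lambda;\mathbf A,\mathbf b,R)$ as in \eqref{eq: the Lagrange dual function}. For $\lambda=0$, $\inf_{\mathbf x}\ell$ is attained at any least-squares solution and equals $\tfrac12\mathbf b^\tp\mathbf b-\tfrac12\mathbf b^\tp\mathbf A(\mathbf A^\tp\mathbf A)^\dag\mathbf A^\tp\mathbf b$, which is again the pseudo-inverse formula.

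Next I will pass to the SVD coordinates $\mathbf c=\mathbf U^\tp\mathbf b$ and $\mathbf x=\mathbf V\mathbf u$. This diagonalizes everything:
\[
h(\lambda)=\tfrac12\sum_{k\le r}\frac{2\lambda c_k^2}{\sigma_k^2+2\lambda}+\tfrac12\sum_{k>r}c_k^2-\lambda R^2,
\qquad
h'(\lambda)=\sum_{k\le r}\frac{\sigma_k^2c_k^2}{(\sigma_k^2+2\lambda)^2}-R^2 .
\]
This gives \eqref{eq: eq:opt-lambda 1}. Since $h'$ is strictly decreasing in $\lambda$ unless all $c_k=0$, $h$ is concave. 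We have $h'(0)=\sum_{k\le r}c_k^2/\sigma_k^2-R^2=\Delta$ and $h'(\lambda)\to -R^2<0$ as $\lambda\to\infty$.

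\textbf{Case split on $\Delta$.}
\begin{itemize}
\item If $\Delta\le 0$, then $h'\le 0$ on $[0,\infty)$, so the supremum is $h(0)=\tfrac12\sum_{i>r}c_i^2$.
\item If $\Delta>0$, there is a unique root $\lambda_0>0$ of $h'$, and the supremum is $h(\lambda_0)$.
\end{itemize}
By strong duality, the optimal value $\mathcal V$ equals $\sup_{\lambda\ge0}h$, which gives \eqref{eq: svd form of the optimal value mapping}. The case $\lambda_0=+\infty$ in the statement is a degenerate convention and does not arise when $R>0$.

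\textbf{Solution set.} I will use the saddle-point characterization: $\mathbf x^*$ is optimal if and only if $\mathbf x^*\in\argmin_{\mathbf x}\ell(\cdot,\lambda^*)$ for a dual optimal $\lambda^*$, $\mathbf x^*$ is feasible, and complementary slackness holds.
\begin{itemize}
\item When $\Delta>0$, $\lambda^*=\lambda_0>0$ and $\ell(\cdot,\lambda_0)$ is strictly convex. So the solution is the singleton $(\mathbf A^\tp\mathbf A+2\lambda_0\mathbf I_n)^{-1}\mathbf A^\tp\mathbf b$. It has norm $R$ precisely by $h'(\lambda_0)=0$, since $\norm{\mathbf x(\lambda)}^2=\sum_{k\le r}\sigma_k^2c_k^2/(\sigma_k^2+2\lambda)^2$.
\item When $\Delta\le 0$, $\lambda^*=0$ is dual optimal. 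The minimizers of $\ell(\cdot,0)$ are the least-squares solutions $(\mathbf A^\tp\mathbf A)^\dag\mathbf A^\tp\mathbf b+\ker\mathbf A$. Intersecting with feasibility gives $\mathcal B(R)$, and complementary slackness is automatic because $\lambda^*=0$. This set is nonempty because the minimum-norm element has norm squared $\sum_{k\le r}c_k^2/\sigma_k^2\le R^2$. This establishes \eqref{eq: exact form for the solution mapping}.
\end{itemize}

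\textbf{Main obstacle.} The delicate step is justifying that, in the case $\Delta\le0$, every primal optimal point is a minimizer of $\ell(\cdot,0)$. I will handle this with the standard fact that, under strong duality, the primal solution set equals $\{\mathbf x\text{ feasible}:\ \mathbf x\in\argmin\ell(\cdot,\lambda^*),\ \lambda^*(\norm{\mathbf x}^2-R^2)=0\}$ for any dual solution $\lambda^*$. As a direct alternative, both the objective value $\tfrac12\sum_{i>r}c_i^2$ and the unconstrained least-squares minimum coincide, so the optimal points are exactly the feasible least-squares solutions.
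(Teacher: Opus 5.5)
Your proposal is correct and follows essentially the same route as the paper: compute the Lagrangian dual function $h$ in SVD coordinates, use concavity and the monotonicity of $\partial h/\partial\lambda$ (whose value at $\lambda=0$ is $\Delta$) to locate the dual maximizer, invoke Slater's condition and strong duality for the optimal value, and recover the primal solution set from the saddle-point characterization (the paper cites Rockafellar's Theorem~36.3 for this step). Your explicit checks that $\norm{\mathbf x(\lambda_0)}=R$, and that the minimum-norm least-squares solution lies in $\mathcal B(R)$ when $\Delta\le 0$, fill in details the paper calls ``straightforward''.
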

			
			\begin{proof}{Proof}
				For the case when $R=0$, the convention $\frac{\alpha}{\infty}=0$ is employed for any $\alpha\in\mathbb R$. If $r=0$, the convention $\sum \emptyset=0$ is employed. 
				In the following, we assume that both $R>0$ and $r>0$.
				
				The Lagrangian function of problem~\ref{eq:ball-nls} is 
				\[	
				L(\mathbf{x},\lambda;\mathbf{A},\mathbf{b},R)= \frac{1}{2}\norm{\mathbf{Ax} - \mathbf{b}}^2 + \lambda (\norm{\mathbf{x}}^2 - R^2),
				\]
				where $\lambda\geq 0$ is the multiplier. The Lagrangian dual problem is 
				\begin{equation}\label{eq: Lagrange dual problem}
					\max_{\lambda\geq 0} \quad h(\lambda;\mathbf{A},\mathbf{b},R),
				\end{equation}
				where 
				\begin{equation}\label{eq:dual-subproblem}
					h(\lambda;\mathbf{A},\mathbf{b},R):=\inf_{\mathbf{x}\in\mathbb{R}^n}L(\mathbf{x},\lambda;\mathbf{A},\mathbf{b},R)=-\frac{1}{2}\mathbf{b}^{\tp} \mathbf{A} (\mathbf{A}^{\tp} \mathbf{A} + 2\lambda \mathbf{I}_n)^\dag \mathbf{A}^{\tp} \mathbf{b} + \frac{1}{2}\mathbf{b}^{{\tp}}\mathbf{b} - \lambda R^2.
				\end{equation}
				Here the equality follows from  \cite[Proposition 3.2 and Proposition 3.3]{Demmel1997appliednumericallnearalgebra} for the unconstrained linear least square problem.
				By the hypothesis on the SVD of $\mathbf A$, we get that
				\begin{equation}\label{eq: svd form of the dual function}
					\begin{aligned}
						h(\lambda;\mathbf{A},\mathbf{b},R)&=-\frac{1}{2}\mathbf{c}^{\tp} \begin{bmatrix}
							\mathbf{\Sigma}_0^2(\mathbf{\Sigma}_0^2+2\lambda \mathbf{I}_r)^{-1}	&0\\0&0
						\end{bmatrix}\mathbf{c}+\frac{1}{2}\mathbf{c}^{\tp} \mathbf{c}-\lambda R^2
						\\&=-\frac{1}{2}\sum_{k=1}^r \frac{\sigma_k^2c_k^2}{\sigma_k^2+2\lambda}+\frac{1}{2}\sum_{i=1}^m c_i^2-\lambda R^2.
					\end{aligned}
				\end{equation}
				Clearly, this implies that given $(\mathbf{A},\mathbf{b},R)$, $h(\lambda;\mathbf{A},\mathbf{b},R)$ is continuous with respect to $\lambda$ in the interval $[0,+\infty)$ and continuously differentiable in $(0,+\infty)$ with derivative 
				$\frac{\partial h}{\partial\lambda}(\lambda;\mathbf{A},\mathbf{b},R)=\sum_{k=1}^r \frac{\sigma_k^2c_k^2}{(\sigma_k^2+2\lambda)^2}-R^2$, which is decreasing.
				Note that $h(\lambda;\mathbf{A},\mathbf{b},R)$ is concave and $\lim_{\lambda\to +\infty}\frac{\partial h}{\partial\lambda}(\lambda;\mathbf{A},\mathbf{b},R)=-R^2<0$.
				Thus the optimal solution of \eqref{eq: Lagrange dual problem} $\lambda^{*}(\mathbf{A},\mathbf{b},R)$ is unique and given by 
				\begin{equation}\label{eq:opt-lambda}
					\lambda^{*}(\mathbf{A},\mathbf{b},R)=\argmax_{\lambda\geq 0}h(\lambda;\mathbf{A},\mathbf{b},R)=\begin{cases}
						0,& \Delta(\mathbf{A},\mathbf{b},R)\leq 0,\\
						\lambda_0(\mathbf{A},\mathbf{b},R),& \Delta(\mathbf{A},\mathbf{b},R) >0,
					\end{cases}
				\end{equation}
				where $\lambda_0(\mathbf{A},\mathbf{b},R)>0$ is the unique solution of~\eqref{eq: eq:opt-lambda 1}. 
				By strong duality $\min_{\mathbf x\in\mathbb R^n}\max_{\lambda\geq 0}L(\mathbf{x},\lambda;\mathbf{A},\mathbf{b},R)=\max_{\lambda\geq 0}\min_{\mathbf{x}\in \mathbb{R}^n}L(\mathbf{x},\lambda;\mathbf{A},\mathbf{b},R)$ (cf.\ \cite{rockafellar-1970a}), the characterization on the optimal value mapping \eqref{eq: svd form of the optimal value mapping} follows. 
				
				By \cite[Proposition 3.2 and Proposition 3.3]{Demmel1997appliednumericallnearalgebra}, the minimum of \eqref{eq:dual-subproblem} is attained at any  
				\begin{equation}\label{eq: first order condition of the dual problem}
					\mathbf{x}^{*}(\lambda;\mathbf{A},\mathbf{b},R)\in (\mathbf{A}^{\tp} \mathbf{A}+2\lambda \mathbf{I}_n)^\dag \mathbf{A}^{\tp} \mathbf{b}+\ker (\mathbf{A}^{\tp} \mathbf{A}+ 2\lambda \mathbf{I}_n).
				\end{equation}
				It is straight forward to see that the righthand side  of \eqref{eq: first order condition of the dual problem} has a nonempty intersection with $\mathcal{B}(R)$ for the optimal $\lambda\geq 0$ determined by \eqref{eq:opt-lambda}. Thus, by strong duality, 
				$\min_{\mathbf x\in\mathcal{B}(R)}\max_{\lambda\geq 0}L(\mathbf{x},\lambda;\mathbf{A},\mathbf{b},R)=\max_{\lambda\geq 0}\min_{\mathbf{x}\in \mathcal{B}(R)}L(\mathbf{x},\lambda;\mathbf{A},\mathbf{b},R)$ and the saddle points are given by 
				\[
				\{(\lambda^*,\mathbf x^*) \colon 
				\mathbf x^*\in (\mathbf{A}^{\tp} \mathbf{A}+2\lambda^*(\mathbf{A},\mathbf{b},R) \mathbf{I}_n)^\dag \mathbf{A}^{\tp} \mathbf{b}+\ker (\mathbf{A}^{\tp} \mathbf{A}+ 2\lambda^*(\mathbf{A},\mathbf{b},R) \mathbf{I}_n)\cap \mathcal{B}(R)\},
				\]
				with $\lambda^*=\lambda^*(\mathbf{A},\mathbf{b},R)$. By \cite[Theorem~36.3]{rockafellar-1970a} and \eqref{eq:opt-lambda}, the characterization on the solution mapping \eqref{eq: exact form for the solution mapping} follows. 
			\end{proof} 
			
			\begin{remark}
				\begin{enumerate}[label=(\arabic*)]
					\item 
					It follows from the proof that 
					\begin{equation*}
						\Delta(\mathbf{A},\mathbf{b},R)=\lim_{\lambda\to 0^{\dag}}\frac{\partial h}{\partial \lambda}(\lambda;\mathbf{A},\mathbf{b},R).
					\end{equation*}
					Thus, $h$ has continuous right derivative at $\lambda=0$. 
					\item An SVD independent formula for $\Delta(\mathbf{A},\mathbf{b},R)$ is 
					\begin{equation}\label{eq:criteria-delta without svd-0} 
						\Delta(\mathbf{A},\mathbf{b},R)=\mathbf{b}^{\tp} \mathbf{A}((\mathbf{A}^{\tp} \mathbf{A})^{\dag})^2\mathbf{A}^{\tp} \mathbf{b}-R^2=\norm{(\mathbf{A}^{\tp} \mathbf{A})^{\dag}\mathbf{A}^{\tp} \mathbf{b}}^2-R^2.
					\end{equation}
					There is a natural geometric meaning of \eqref{eq:criteria-delta without svd-0}. Consider the unconstrained linear LS problem 
					\begin{equation}\label{eq:unconstrained}
						\min_{\mathbf{x}\in\mathbb{R}^n}\norm{\mathbf{Ax}-\mathbf{b}}^2.
					\end{equation}
					By \cite[Proposition 3.2 and Proposition 3.3]{Demmel1997appliednumericallnearalgebra}, we know that the solution set of \eqref{eq:unconstrained} is the affine space $\mathcal{A}_{\mathbf{A},\mathbf{b}}:=\mathbf{A}^{\dag}\mathbf{b}+\ker(\mathbf{A})=(\mathbf{A}^{\tp}\mathbf{A})^\dag \mathbf{A}^{\tp}\mathbf{b}+\ker(\mathbf{A})$ and $\mathbf{x}^*:=(\mathbf{A}^{\tp}\mathbf{A})^\dag \mathbf{A}^{\tp}\mathbf{b}$ is the optimal solution of the smallest norm. It is clear that the norm $\norm{\mathbf{x}^*}$ is the distance from the origin to $\mathcal{A}_{\mathbf{A},\mathbf{b}}$. Thus for $R>0$, $\Delta(\mathbf{A},\mathbf{b},R)>0$ means that $\mathcal{A}_{\mathbf{A},\mathbf{b}}$ has an empty intersection with $\mathcal{B}(R)$, $\Delta(\mathbf{A},\mathbf{b},R)\leq 0$ implies that  $\mathcal{A}_{\mathbf{A},\mathbf{b}}$ has an nonempty intersection with $\mathcal{B}(R)$, while  $\Delta(\mathbf{A},\mathbf{b},R)=0$ indicates that $\mathcal{A}_{\mathbf{A},\mathbf{b}}$ is tangent to $\mathcal{B}(R)$.
					\item It is also clear that both 
					the optimal value mapping and the solution mapping can be concisely expressed respectively as
					\begin{equation}\label{eq: svd form of the optimal value mapping-compact}
						\mathcal{V}(\mathbf{A},\mathbf{b},R)=\sup_{\lambda\geq 0}h(\lambda;\mathbf{A},\mathbf{b},R)=h(\lambda^*;\mathbf{A},\mathbf{b},R), 
					\end{equation}
					and 
					\begin{equation}\label{eq: exact form for the solution mapping-compact}
						\begin{aligned}
							\mathcal{S}(\mathbf{A},\mathbf{b},R)=
							(\mathbf{A}^{{\tp}} \mathbf{A}+2\lambda^*(\mathbf{A},\mathbf{b},R) \mathbf{I}_n)^{\dag}\mathbf{A}^{\tp} \mathbf{b}+\ker(\mathbf{A}))\cap \mathcal{B}(R),
						\end{aligned}
					\end{equation} 
					where
					$\lambda^*=\lambda^*(\mathbf{A},\mathbf{b},R)\in\mathbb R_{+}\cup\{+\infty\}$ is the unique solution of the first order condition 
					\begin{equation}\label{eq: eq:opt-lambda 1-first}
						\frac{\partial h}{\partial\lambda}(\lambda^*;\mathbf{A},\mathbf{b},R)\in-\partial \delta_{\mathbb R_+}(\lambda^*).
					\end{equation} 
				\end{enumerate}
			\end{remark}

	\end{document}